\documentclass[12pt]{article}
\usepackage{amsthm,amscd}
 \usepackage{txfonts}

\usepackage{amsmath}
\usepackage{mathrsfs}
\usepackage{pgfplots}
\usepackage{enumerate}
\usepackage[english]{babel}
\usepackage{color}
\usepackage{hyperref}
\usepackage{lastpage}
\usepackage{fancyhdr}
\usepackage{geometry}
\usepackage[english]{babel}
\usepackage{graphicx}
\hypersetup{
    colorlinks,
    citecolor=black,
    filecolor=black,
    linkcolor=black,
    urlcolor=black
}
\geometry{
  top=1.2in,           
  inner=1.2in,
  outer=1.2in,
  bottom=1.3in,  
    headheight=-2ex,       
    headsep=4ex,        
}

\usepackage{fancyhdr}
\pagestyle{fancy}
\headheight=15pt
\fancyhead{}
\fancyfoot{}

\fancyhead[L]{\rightmark}
\fancyhead[R]{\textbf{\thepage}}
\setcounter{secnumdepth}{3}
\usepackage{tocloft}

\usepackage{mathtools}
\mathtoolsset{showonlyrefs}

\newtheorem{theorem}{Theorem}[section]
\newtheorem{lemma}[theorem]{Lemma}
\newtheorem{proposition}[theorem]{Proposition}
\newtheorem{corollary}[theorem]{Corollary}

\theoremstyle{definition}
\newtheorem{assumption}{Assumption}[section]
\newtheorem{definition}{Definition}[section]

\theoremstyle{remark}
\newtheorem{remark}{Remark}[section]
\newcommand\bR{\mathbf{R}}
\newcommand\bE{\mathbf{E}}
\newcommand\bF{\mathbf{F}}

\newcommand\bH{\mathbf{H}}
\newcommand\bN{\mathbf{N}}
\newcommand\bG{\mathbf{G}}
\newcommand\bQ{\mathbf{G}}
\newcommand\bZ{\mathbf{Z}}

\newcommand\cB{\mathcal{B}}

\newcommand\cF{\mathcal{F}}

\newcommand\cI{\mathcal{I}}

\newcommand\cL{\mathcal{L}}

\newcommand\cN{\mathcal{N}}

\newcommand\cP{\mathcal{P}}

\newcommand\cT{\mathcal{T}}

\title{\vspace{-1.2in}Finite Difference Schemes for Linear Stochastic  Integro-Differential Equations  \vspace{-0.5cm}}

\begin{document}

\maketitle

\begin{tabular}{l}

{\large \textbf{Konstantinos Dareiotis}}\vspace{0.2cm}\\
 Uppsala University, E-mail: \href{konstantinos.dareiotis@math.uu.se}{konstantinos.dareiotis@math.uu.se}\vspace{0.4cm}\\
 
{\large\textbf{James-Michael Leahy}} \vspace{0.2cm}\\
 The University of Southern California, E-mail: \href{leahyj@usc.edu}{leahyj@usc.edu}\vspace{0.4cm} \\

 {\large\textbf{Abstract}} \vspace{0.2cm} \\
 \begin{minipage}[t]{0.9\columnwidth}%
We study the rate of convergence of an explicit and an  implicit-explicit finite difference scheme for  linear stochastic  integro-differential equations of parabolic type arising in non-linear filtering of jump-diffusion processes. We show that the rate is of order one in space and order one-half in time. 
 \end{minipage}
\end{tabular}

\tableofcontents
\thispagestyle{empty}

\section{Introduction}

Let $(\Omega,\cF,\bF,P)$, $\bF=(\cF_t)_{t\ge 0}$, be a complete filtered probability space such that the filtration is right continuous and  $\cF_0$ contains  all $P$-null sets of $\cF$. Let  $\{w^\varrho\}_{\varrho=1}^\infty$ be  a sequence of independent real-valued $\bF$-adapted Wiener processes.  Let $\pi_1(dz)$ and $\pi_2(dz)$ be  a Borel sigma-finite measures on $\bR^d$ satisfying
\begin{equation}                   \label{eq moments of the measure }
\int_{\bR^d}|z|^2\wedge 1 \ \pi_r(dz)<\infty, \;\;r\in\{1,2\}.
\end{equation}
Let $q(dt,dz)=p(dt,dz)-\pi_2(dz)dt$ be a compensated $\bF$-adapted Poisson random measure on $\bR_+\times \bR^d$.
 Let $T>0$ be an arbitrary fixed constant.  On   $[0,T]\times \bR^d$, we consider finite difference approximations for  the following stochastic integro-differential equation (SIDE)
\begin{align}    \label{eq:SPIDEintro}
du_t&=\left((\cL_t+I) u_t+f_t\right) dt+\sum_{\varrho=1}^\infty\left(\cN_t^{\varrho}u_{t}+g^\varrho_t\right) dw^\varrho_t+\int_{\bR^d} \left(\cI(z)u_{t-}+o_t(z)\right)q(dt,dz),
\end{align}
with  initial condition
$$u_0(x)=\varphi(x),\;\;\;x\in\bR^d,$$
 where the operators are given by
$$
\cL_t\phi(x): = \sum_{i,j=0}^d a_t^{ij}(x)\partial_{ij}\phi(x),
$$
\begin{equation} \label{eq definition of I}
I\phi(x): =\int_{\bR^d}\left(\phi(x+z)-\phi(x)-\mathbf{1}_{[-1,1]}(|z|)\sum_{j=1}^d z_j\partial_j\phi(x)\right)\pi_1(dz),
\end{equation}
$$
 \cN_t^{\varrho}\phi(x): =\sum_{i=0}^d  \sigma^{i\varrho}_t(x)\partial_i\phi(x), \quad \cI(z)\phi(x)=\phi(x+z)-\phi(x). 
$$
Here, we denote the identity operator by  $\partial_0$.

 Equation \eqref{eq:SPIDEintro} arises naturally in non-linear filtering of  jump-diffusion  processes. We refer the reader to \cite{Gr82} and \cite{GrMi11} for more information about non-linear filtering of jump-diffusions and the derivation of the Zakai equation. 
Various methods  have been proposed to  solve stochastic partial differential equations (SPDEs)  numerically. For SPDEs driven by continuous martingale noise see, for example, \cite{GrKl96}, \cite{Gy98}, \cite{Gy99}, \cite{GyMi09} \cite{LoRo04}, \cite{JeKl10} and \cite{Ya05} and for SPDEs driven by discontinuous martingale noise, see \cite{HaMa06}, \cite{Ha07}, \cite{La12}, and  \cite{BaLa12}. Among the various methods considered in the literature is the method of finite differences.  For second order linear SPDEs driven by continuous martingale noise it is well-known that the $L^p(\Omega)$-pointwise error of  approximation in space is proportional to the parameter $h$  of the finite difference (see, e.g., \cite{Yo00}). In \cite{GyMi09}, I. Gy\"ongy and A. Millet  consider abstract discretization schemes for stochastic evolution equations driven by continuous martingale noise in the variational framework and, as a particular example, show that the  $L^2(\Omega)$-pointwise rate of convergence of an Euler-Maruyuma (explicit and implicit) finite difference scheme is of order one in space and one-half in time.  More recently, it was shown by I. Gy\"ongy and N.V. Krylov that under certain regularity conditions, the rate of convergence in space of a semi-discretized finite difference approximation of a linear second order SPDE driven by continuous martingale noise can be accelerated to any order by Richardson's extrapolation method. For the non-degenerate case, we refer to \cite{GyKr10c} and \cite{GyKr11a}, and for the degenerate case, we refer to  \cite{Gy11}.  In \cite{Ha12} and \cite{Ha13}, E. Hall proved that the same method of acceleration can be applied to implicit time-discretized SPDEs driven by continuous martingale noise.
 
In the literature,  finite element, spectral, and, more generally, Galerkin schemes have  been studied  for SPDEs driven by discontinuous martingale noise. One of the earliest works in this direction is a paper \cite{HaMa06}  by E. Hausenblas and I. Marchis concerning $L^p(\Omega)$-convergence of Galerkin approximation schemes for abstract stochastic evolution equations in Banach spaces driven by Poisson noise of impulsive-type. As an application of their result, they study a spectral approximation of a  linear SPDE  in $L^2([0,1])$ with Neumann boundary conditions driven by  Poisson noise of impulsive-type  and derive $L^p(\Omega)$-error estimates in the $L^2([0,1])$-norm. In \cite{Ha07}, E. Hausenblas  considers finite element approximations of linear SPDEs in polyhedral domains $D$ driven by Poisson noise of impulsive-type and derives $L^p(\Omega)$ error estimates in the $L^p(D)$-norm.  In a more recent work \cite{La12}, A. Lang studied semi-discrete Galerkin approximation  schemes for  SPDEs of advection diffusion type  in bounded domains $D$ driven by  c\'adl\'ag square integrable martingales in a Hilbert space.  A. Lang showed that the rate of convergence  in the $L^p(\Omega)$ and almost-sure sense in the $L^2(D)$-norm is of order two for a finite-element Galerkin scheme.  In \cite{BaLa12}, A. Lang and A. Barth derive $L^2(\Omega)$ and almost-sure estimates in the $L^2(D)$-norm for the error of  a Milstein-Galerkin approximation scheme for the same equation considered in  \cite{La12} and obtain convergence  of order two in space and order one in time. 

 In the articles \cite{La12}, \cite{BaLa12}, \cite{HaMa06}, and \cite{Ha07}, the authors  make use of the semigroup theory of stochastic evolution equations (mild solution) and only consider stochastic evolution equations in which the principal part of the operator in the drift is non-random.  In this paper, since we use the variational framework ($L^2$-theory) of SPDEs, we are easily able to treat the case of random-coefficients. 
 
The principal part of the operator in the drift of the Zakai equation is, in general, random, and hence numerical schemes that approximate SPDEs or SIDEs with adapted principal part are of importance.  The coefficients of the Zakai equation are random if the coefficients of the SDE  governing the signal depend on the observation or some observation measurable process--perhaps a control.  In this case, the diffusion coefficient $a^{ij}_t(x,\omega)$ in \eqref{eq:SPIDEintro} will be of the form $a_t^{ij}(x,\omega)=(\bar{\sigma}^i(x,y_t(\omega))\bar{\sigma}^j(x,y_t(\omega))$, where $y_t(\omega)$ is an adapted random process and $\bar{\sigma}^i(x,y)$ is a diffusion coefficient in an SDE. Due to the form of the random coefficient in this case, to impose uniform boundedness of $a_t^{ij}(x,\omega)$ in $t,x$ and $\omega$, we need only impose uniform boundedness of $\bar{\sigma}(x,y)$  in $x$ and $y$, and  to impose uniformly ellipticity of $a^{ij}_t(x,\omega)$ in $t,x$ and $\omega$,  we need only impose that standard uniform ellipticity of $\bar{\sigma}^i(x,y)\bar{\sigma}^j(x,y)$ in $x$ and $y$. These assumptions are not uncommon in the SDE literature.  Furthermore, since any numerical scheme for \eqref{eq:SPIDEintro} will be implemented pathwise--note also that in filtering, one only gets to see one path of the observation--the additional computational complexity involved in implementing  a numerical scheme for \eqref{eq:SPIDEintro}  with  random coefficients of the  form $a_t^{ij}(x,\omega)=(\bar{\sigma}^i(x,y_t(\omega))\bar{\sigma}^j(x,y_t(\omega))$ compared with  $a^{ij}(x)=\bar{\sigma}^i(x)\bar{\sigma}^j(y)$  is simply the time dependence of the coefficient. In the case of an implicit scheme, this does mean that one has to invert an operator at each time step, but this is the case for deterministic PDEs with time-dependent coefficients as well. 
 
The  articles \cite{La12}, \cite{BaLa12}, \cite{HaMa06}, and \cite{Ha07} do not address the approximation of equations with non-local operators in the drift and noise.  There is, however,  some work in the literature on deterministic non-local differential equations. In dimension one, a finite difference scheme for   degenerate integro\allowbreak-\allowbreak differential equations (deterministic) has been studied by R. Cont and E. Voltchkova in \cite{CoVo05}.  The authors in \cite{CoVo05} first approximate the integral operator  near the origin with a second derivative operator. The resulting PDE is then non-degenerate and has an integral operator of order zero.  The error of this  approximation is  studied by means of the probabilistic representation of the solution of both the original equation and the non-degenerate equation. In the second step of their approximation, R. Cont and E. Voltchkova  consider an implicit-explicit finite difference scheme and obtain pointwise error estimates of order one in space. As a consequence of the two-step approximation scheme, there are two separate errors for the approximation. We are able to avoid the two-step approximation in our work, when restricted to the non-degenerate diffusion case.

In this paper, we consider the non-degenerate stochastic  integro-differential equation  \eqref{eq:SPIDEintro} with random coefficients and  apply the method of finite differences in the time and space variables.  To the best of our knowledge, this 
article is the first to use the finite difference method to approximate stochastic integro-differential equations. The approximations of  the non-local integral operators in the drift  and in the noise  of \eqref{eq:SPIDEintro}  we choose are  natural.   In particular, we are able to treat the singularity of the integral operators near the origin directly. We consider a fully-explicit time-discretization scheme and an implicit-explicit time-discretization scheme, where we treat part of the approximation of the integral operator in the drift explicitly.  We also provide a numerical verification of our theoretical convergence rates for an equation that has  an ``analytic'' solution.

  To obtain error estimates for our approximations, we use the approach in \cite{Yo00}, where the discretized equations are first  solved as time-discretized SDEs in Sobolev spaces over $\bR^d$ and an error estimate is obtained in Sobolev norms.  After obtaining  $L^2(\Omega)$ error estimates in Sobolev norms,  the Sobolev embedding theorem is used to obtain $L^2(\Omega)$-pointwise error estimates. So, in sum, we obtain two types of error estimates:  in Sobolev norms and on the grid. Naturally, when  using the Sobolev embedding to obtain the pointwise estimates, we do not need the equation to be differentiable to obtain pointwise error estimates, only continuous. 
Using the approach of first obtaining estimates in Sobolev spaces, we are also  easily able to deduce that the more regularity on the coefficients and data we  have, the stronger the error estimates  we can obtain (see Corollaries     \ref{cor:beforeembeddingthm:explicit} and \ref{cor:beforeembeddingthm:implicit}).  

The paper is organized as follows. In the next section (Section 2), we introduce the notation that will be used throughout the  paper and state the main results.  In the third section, we give a numerical verification of the convergence rates for a simple test problem. In the fourth section, we prove auxiliary results that will be used in the proof of the main theorems. In the  fourth section, we prove the main theorems of the paper.

\section{Notation and the main results}

\indent For $x\in \bR^d$, denote by $|x|$ the Euclidean norm of $x$.   Let $\bN_0=\{0,1,2,\ldots\}$. For $i\in \{1,\ldots,d\}$,  let $\partial_{-i}=-\partial_i$, and let  $\partial_{0}$ be the identity. For a multi-index $\gamma=(\gamma_1,\ldots,\gamma_d)\in\bN_0$ of length $|\gamma|=\gamma_1+\cdots+\gamma_d$, set $\partial^\gamma=\partial_1^{\gamma_1}\ldots \partial_d^{\gamma_d}$.  Let $\ell_2$ be the space of all square-summable real-valued sequences $b=(b^\varrho)_{\varrho=1}^\infty$.  For an $\ell_2$-valued function $f$ on $\bR^d$, the derivative of $f$ with respect to  $x^i$ is denoted by $\partial_i f$. 

Let $C_c^\infty(\bR^d)$  be the space of all smooth real-valued functions on $\bR^d$ with compact support. We write $(\cdot,\cdot)_0$ for the inner product and $\|\cdot \|_0$ for the norm in $L_2(\bR^d)=:H^0$.  For $m\in\bN$, denote by $H^m$ the Sobolev space of all functions $u\in L_2(\bR^d)$ having distributional derivatives up to order $m$ in $L_2(\bR^d)$. We denote by 
$$
(\cdot,\cdot)_m: = \sum_{|\gamma|\le m} (\partial^{\gamma}\cdot, \partial^{\gamma}\cdot)_0
$$
the inner product in $H^m$ and  by $\|\cdot\|_{m}$ the corresponding norm.
Define $H^{-1}$ to be the completion of $C_c^\infty(\bR^d)$ with respect to the norm $\|\cdot\|_{-1}=\|(1-\Delta)^{-1/2}\cdot\|_0$, where $\Delta$ is the Laplace operator. It is easy to see that for all $u\in H^1$ and $v\in H^0$, $(u,v)_0\le \|u\|_1\|v\|_{-1}$. Since $H^1$ is dense in $H^{-1}$, we may define the pairing $\left[\cdot,\cdot\right]_{0}: H^{1}\times H^{-1}\rightarrow \bR$ by $[v,v']_{0}=\lim_{n\rightarrow\infty}(v,v_n)_{0}$ for all $v\in H^{1}$ and $v'\in H^{-1}$, where $(v_n)_{n=1}^\infty \subset H^{1}$  is  such that $\|v_n-v'\|_{-1}\rightarrow 0$ as $n\rightarrow \infty$. The mapping from $H^{-1}$ to $(H^{1})^*$ given by
$
v'\mapsto [\cdot,v']_{0}
$
is an isometric isomorphism.  For more details, see \cite{Ro90}.  For an integer $m\ge 0$, we write $H^{m}(\ell_2)$ for the space of all $\ell_2-$valued functions $g(x)=(g^\varrho(x))_{\varrho=1}^\infty $ on $\bR^d$ such that for each $\varrho$, $g^\varrho\in H^m$ and 
$$
\|g\|^2_{m,\ell_2} := \sum_{\varrho=1}^\infty\|g^\varrho\|^2_{m} < \infty.
$$

On   $[0,T]\times \bR^d$, we consider the stochastic integro-differential equation
\begin{align}    \label{eq:SPIDE}
du_t&=\left((\cL_t+I) u_t+f_t\right) dt+\sum_{\varrho=1}^\infty \left(\cN_t^{\varrho}u_{t}+g^\varrho_t\right) dw^\varrho_t+\int_{\bR^d}\left( \cI(z)u_{t-}+o_t(z)\right)q(dt,dz)
\end{align}
with  initial condition
$$u_0(x)=\varphi(x),\;\;\;x\in\bR^d.$$ 
 Denote the predictable sigma-algebra on $\Omega\times [0,T]$ relative to $\bF$ by $\cP_T$. Let $m\ge 0$ be an integer. 
\begin{assumption}       \label{asm:coeff}
For $i,j\in \{0,\ldots, d\}$, $a^{ij}_t=a^{ij}_t(x)$ are  real-valued functions defined on $\Omega\times [0,T]\times \mathbf{R}^d$ that are $\cP_T\otimes\cB(\bR^d)$-measurable and $\sigma^i_t=(\sigma^{i \varrho}_t(x))_{ \varrho=1}^\infty$ are $\ell_2$-valued functions that are $\mathscr{P}_T\otimes\cB(\bR^d)$-measurable.  Moreover,
\
\begin{enumerate}[(i)]
\item for each $(\omega,t)\in\Omega\times[0,T]$, the  functions $a^{ij}_t$ are $\max(m,1)$-times continuously differentiable in $x$ for all $i,j \in \{1,\ldots ,d\}$, $a^{i0}_t$ and $a^{0i}_t$ are $m$-times continuously differentiable in $x$ for all $i\in \{0,1,\ldots,d\}$, and $\sigma^i_t$ are $m$-times continuously differentiable in $x$ as $\ell_2$-valued functions for all $i\in \{0,\ldots, d\}$. Furthermore, there is a constant $K>0$ such that for all $(\omega,t,x)\in \Omega\times[0,T]\times\bR^d$,
$$
|\partial^\gamma a_t^{ij}|\le K,  \;\; \forall \  i,j\in \{1,\ldots,d\},\;\;\forall \; |\gamma|\le \max(m,1),
$$
$$
|\partial^\gamma a_t^{i0}|+|\partial^\gamma a_t^{0i}|+|\partial^\gamma \sigma_t^i|_{\ell_2}\le K, \;\;\forall \  i\in \{0,\ldots,d\},\;\;\forall |\gamma|\le m;
$$
\item there exists a positive constant $\varkappa>0$ such that for all $(\omega,t,x)\in \Omega\times[0,T]\times\bR^d$ and $\eta\in\bR^d$
$$\sum_{i,j=1}^d \left(2a_t^{ij}-\sum_{\varrho=1}^\infty \sigma_t^{i \varrho}\sigma_t^{j \varrho}\right)\eta_i\eta_j\ge \varkappa |\eta|^2.$$
\end{enumerate}
\end{assumption}
We define the following spaces:
$$
\bH^m:= L_2(\Omega \times [0,T], \cP_T; H^m),\;\;\bH^m(\ell_2):= L_2(\Omega \times [0,T],\cP_T; H^m(\ell_2))
$$
$$
\bH^m(\pi_2) := L_2(\Omega \times [0,T] \times \bR^d, \cP_T\otimes \cB(\bR^d),dP\times dt\times \pi_2(dz); H^m).
$$
\begin{assumption}      \label{asm:data}  The initial condition $\varphi$ is $\cF_0$-measurable  with values in $H^m$ such that $\bE|\varphi|_m^2<\infty$. Moreover,   $f\in\bH^{m-1}$, $g\in\bH^m(\ell_2)$, and $o\in\bH^m(\pi_2)$.   Set
$$
\kappa_{m}^2=\bE\|\varphi\|_{m}^2 + \bE\int_{]0,T]} \left(\|f_t\|^2_{m-1}+ \|g_t\|_{m,\ell_2}^2+\int_{\bR^d}\|o_t(z)\|_m^2\pi_2(dz)\right) dt.
$$
\end{assumption}
For a real-valued twice continuous differentiable function  $\phi$ on $\bR^d$, it is easy to see that for all $x,z\in\bR^d$,
\begin{align}	\label{motivator}
\phi(x+z)-\phi(x)-\sum_{j=1}^d z^j\partial_j\phi(x)=\int_0^1\sum_{i,j=1}^d z^iz^j\partial_{ij}\phi(x+\theta z)(1-\theta)d\theta.
\end{align}
For each $\delta\in (0,1]$, let 
$$
\varsigma_1(\delta )=\int_{|z| \leq \delta} |z|^2 \pi_1(dz) , \quad  \varsigma_2(\delta)=\int_{|z| \leq \delta} |z|^2 \pi_2(dz), \quad \textnormal{and} \quad  \varsigma(\delta)=\varsigma_1(\delta )+\varsigma_2(\delta ).
$$
Fix $\delta\in (0,1]$ such that
\begin{equation}                    \label{delta of the levy measure} \varsigma(\delta )<\varkappa,
\end{equation} 
and notice that 
\begin{equation}        \label{delta complement of the levy measure}
\sum_{r=1}^2\pi_r(\{|z| > \delta \})< \infty.
\end{equation}
We write $I= I_{\delta}+I_{\delta^c}$, where 
$$
I_{\delta}\phi(x) := \int_{|z|\le \delta}\int_0^1\sum_{i,j=1}^d z^iz^j\partial_{ij}\phi(x+\theta z)(1-\theta)d\theta\pi_1(dz)
$$
and  $I_{\delta^c}$ is defined as in  \eqref{eq definition of I} with integration over $\{|z|>\delta \}$ instead of $\bR^d$. 

\begin{definition}      \label{defn:Solution}
An $H^0$-valued  c\`adl\`ag  adapted process $u$ is called a solution of \eqref{eq:SPIDE} if 
 \begin{enumerate}[(i)]
\item $u_t \in H^1$ for $dP\times dt $-almost-every $(\omega,t) \in \Omega\times[0,T]$;
\item $\bE\int_{]0,T]}\|u_t\|^2_1dt< \infty$;
\item there exists a set $\tilde{\Omega} \subset \Omega$ of probability one such that  for all  $(\omega,t) \in [0,T]\times \tilde{\Omega}$ and $\phi \in C_c^\infty(\bR^d)$,
$$  
(u_t,\phi)_0=(\varphi,\phi)_0+\int_{]0,t]}\left( \sum_{i,j=1}^d  \left  (\partial_ju_s,\partial_{-i}(a^{ij}_s\phi)\right)_0+[\phi, f_s]_0 \right)  ds 
$$
$$
+\int_{]0,t]}\int_{|z| \leq \delta}\int_0^1 \sum_{i,j=1}^d \left(z^j\partial_ju_s(\cdot+\theta z),z^i\partial_{-i}\phi \right)_0(1-\theta)d\theta \pi_1(dz)ds
$$
$$
+\int_{]0,t]}\int_{|z| > \delta}\left(u_s(\cdot+z)-u_s- \mathbf{1}_{[-1,1]}(|z|)\sum_{j=1}^d z^j\partial_ju_s,\phi\right)_0\pi_1(dz)ds
$$
\begin{equation}\label{eq:variationalsolution}
+\sum_{\varrho=1}^\infty \int_{]0,t]} \sum_{i=0}^d \left(\sigma^{i \varrho}_s\partial_iu_{s}+g_s^\varrho,\phi\right)_0 dw^\varrho_s
+\int_{]0,t]}\int_{\bR^d} \left(u_{s-}(\cdot+z)-u_{s-}+o_t(z),\phi\right)_0q(dz,ds).
\end{equation}
\end{enumerate}
\end{definition}

\begin{remark}
In the above definition, instead of $\delta$ we may choose any other positive constant. 
\end{remark}

The following existence theorem is a consequence of Theorems 2.9, 2.10, and 4.1 in \cite{Gy82} and will be verified in Section 4. The notation $N=N(\cdot,\cdots,\cdot)$ is used to denote a positive constant
depending only on the quantities appearing in the parentheses. In a given
context, the same letter is repeatedly used to denote different constants
depending on the same parameter. 

\begin{theorem}         \label{th:SPIDEExist}
If Assumptions  \ref{asm:coeff}  and \ref{asm:data} hold with $m\ge 0$, then there exist a unique  solution $u$ of \eqref{eq:SPIDE}.  Furthermore,  $u$ is a   c\'adl\'ag  $H^{m}$-valued process with probability one and there is a constant $N=N(d,m,\varkappa,K,T)$ such that
\begin{align}   \label{eq:EstSPIDE}
\bE\sup_{t\le T}\|u_t\|_{m}^2+\bE\int_{]0,T]}\|u_s\|_{m+1}^2ds\le
N\kappa_{m}^2.
\end{align}
\end{theorem}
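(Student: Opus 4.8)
The plan is to recast \eqref{eq:SPIDE} as an abstract linear stochastic evolution equation, driven by the continuous martingales $w^\varrho$ and the compensated Poisson measure $q$, in the Gelfand triple $H^{m+1}\hookrightarrow H^m\hookrightarrow H^{m-1}$ (for $m=0$ this is $H^1\hookrightarrow H^0\hookrightarrow H^{-1}$, using the pairing $[\cdot,\cdot]_0$ introduced above), and then to invoke the abstract existence, uniqueness and regularity results (Theorems 2.9, 2.10 and 4.1) of \cite{Gy82}. The drift operator is $A_tu=(\cL_t+I)u+f_t$, the diffusion operator is $\cN_t^\varrho u+g_t^\varrho$, and the jump coefficient is $\cI(z)u+o_t(z)$. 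Since the equation is affine in $u$, the required hemicontinuity and monotonicity conditions reduce to the coercivity estimate applied to differences of solutions, so the entire verification comes down to (a) measurability and adaptedness, (b) boundedness and linear growth of the operators between the relevant spaces, and (c) coercivity (stochastic parabolicity). Items (a) and (b) are routine: adaptedness and measurability follow from Assumption \ref{asm:coeff}, and bounds such as $\|A_tu\|_{m-1}\le N\|u\|_{m+1}+\|f_t\|_{m-1}$ follow from the uniform coefficient bounds in Assumption \ref{asm:coeff}(i), Assumption \ref{asm:data}, and the moment conditions on $\pi_1,\pi_2$; here the splitting $I=I_\delta+I_{\delta^c}$ is used, the singular part $I_\delta$ being controlled through \eqref{motivator} by $\varsigma_1(\delta)<\infty$ and $I_{\delta^c}$ by \eqref{delta complement of the levy measure}.

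The heart of the proof is the coercivity estimate, which I carry out first for $m=0$ and $u\in H^1$. Integrating by parts in the second-order part of $\cL_t$ produces the leading term $-2\int_{\bR^d}\sum_{i,j=1}^d a^{ij}_t\,\partial_iu\,\partial_ju\,dx$, while expanding $\sum_\varrho\|\cN_t^\varrho u\|_0^2$ produces the leading term $+\int_{\bR^d}\sum_{i,j=1}^d\big(\sum_\varrho\sigma_t^{i\varrho}\sigma_t^{j\varrho}\big)\partial_iu\,\partial_ju\,dx$; together, Assumption \ref{asm:coeff}(ii) bounds their sum by $-\varkappa\|\nabla u\|_0^2$. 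The remaining contributions of $\cL_t$ and $\cN_t^\varrho$ (the $\partial_0$-terms and the mixed terms) are of order at most one and are absorbed by Young's inequality into $\epsilon\|\nabla u\|_0^2+N_\epsilon\|u\|_0^2$. For the non-local operators I again use $I=I_\delta+I_{\delta^c}$: after integrating by parts in \eqref{motivator} and applying the Cauchy--Schwarz inequality one gets $2[u,I_\delta u]_0\ge-\varsigma_1(\delta)\|\nabla u\|_0^2$, while $I_{\delta^c}$ contributes only a term bounded by $N\|u\|_0^2$ (its first-order drift part $\sum_jz_j\partial_ju$ pairs to zero with $u$ because $\int_{\bR^d}\partial_j(u^2)\,dx=0$). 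Finally, using $\|u(\cdot+z)-u\|_0^2\le|z|^2\|\nabla u\|_0^2$ (Plancherel together with $|e^{iz\cdot\xi}-1|\le|z||\xi|$) on $|z|\le\delta$ and $\|u(\cdot+z)-u\|_0^2\le4\|u\|_0^2$ on $|z|>\delta$, the jump term satisfies $\int_{\bR^d}\|\cI(z)u\|_0^2\pi_2(dz)\le\varsigma_2(\delta)\|\nabla u\|_0^2+N\|u\|_0^2$. Summing the three estimates gives
\[
2[u,(\cL_t+I)u]_0+\sum_\varrho\|\cN_t^\varrho u\|_0^2+\int_{\bR^d}\|\cI(z)u\|_0^2\pi_2(dz)\le-(\varkappa-\varsigma(\delta)-\epsilon)\|\nabla u\|_0^2+N\|u\|_0^2,
\]
and choosing $\epsilon<\varkappa-\varsigma(\delta)$, which is possible precisely because of \eqref{delta of the levy measure}, yields coercivity with constant $\lambda=\varkappa-\varsigma(\delta)-\epsilon>0$.

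For $m\ge1$ I would run the same estimate in $H^m$: applying $\partial^\gamma$ with $|\gamma|\le m$ to the equation, the principal symbol acts on $\partial^\gamma u$ and reproduces the gain $-(\varkappa-\varsigma(\delta))\|\nabla\partial^\gamma u\|_0^2$, whereas the commutators of $\partial^\gamma$ with $\cL_t$, $\cN_t^\varrho$, $I$ and $\cI(z)$ involve only derivatives of the coefficients of order $\le m$, bounded by Assumption \ref{asm:coeff}(i), times lower-order derivatives of $u$, and are absorbed exactly as before; this is where the differentiability hypotheses on $a^{ij},\sigma^i$ are used and where the dependence of $N$ on $d,m,\varkappa,K,T$ is fixed. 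With coercivity established, the theorems of \cite{Gy82} deliver existence and uniqueness of a solution in the sense of Definition \ref{defn:Solution}, its càdlàg $H^m$-valued modification, and, via the It\^o formula for $\|u_t\|_m^2$ in the Gelfand triple combined with the Gronwall and Burkholder--Davis--Gundy inequalities, the a priori bound \eqref{eq:EstSPIDE}.

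The main obstacle is exactly this coercivity estimate for the non-local operators. A priori both the genuinely second-order singular operator $I_\delta$ and the small-jump part of $\cI$ threaten to destroy parabolicity, and the whole argument hinges on showing that their combined loss is no more than $\varsigma(\delta)\|\nabla u\|_0^2$, which is strictly dominated by the parabolic gain $\varkappa\|\nabla u\|_0^2$ by the choice \eqref{delta of the levy measure}. Keeping the constants in this splitting sharp, so that a strictly positive coercivity constant $\lambda$ survives after all lower-order terms are absorbed, is the delicate point; everything else is bookkeeping.
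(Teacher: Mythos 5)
Your proposal follows essentially the same route as the paper: recast \eqref{eq:SPIDE} as a linear stochastic evolution equation in the Gelfand triple $(H^{m+1},H^m,H^{m-1})$, reduce hemicontinuity and monotonicity to coercivity by linearity, and verify coercivity by pairing Assumption \ref{asm:coeff}(ii) against the splitting $I=I_\delta+I_{\delta^c}$ and the bound $\int_{\bR^d}\|\cI(z)u\|_0^2\,\pi_2(dz)\le\varsigma_2(\delta)\|\nabla u\|_0^2+N\|u\|_0^2$, exactly as in the paper's proof (the only cosmetic differences being your use of $\partial^\gamma$ rather than the equivalent $(1-\Delta)^{m/2}$ norm for $m\ge1$, and a sign slip where you write $2[u,I_\delta u]_0\ge-\varsigma_1(\delta)\|\nabla u\|_0^2$ when the upper bound $2[u,I_\delta u]_0\le\varsigma_1(\delta)\|\nabla u\|_0^2$ is the one needed, though Cauchy--Schwarz gives both). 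The argument is correct.
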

\begin{remark}
We have used the  standard definition of solution for the variational (or $L^2$) theory fo stochastic partial differential equations. In what follows below,  we will always assume $m\ge 2$ (though for our schemes, we assume $m\ge 3$), and so we have enough  regularity to formulate the solution in the weak sense in $(H^1, H^0, H^{-1})$ without integrating by parts. 
\end{remark}

The following  proposition is needed to establish the rate of convergence in time of our  approximation scheme and is proved in Section 4. 
\begin{proposition}     \label{prop:SPIDETimeReg}
Let  Assumptions  \ref{asm:coeff}  and \ref{asm:data} hold with $m\ge 1$ and $u$ be the solution of  \eqref{eq:SPIDE}. Moreover, assume that
$$
\sup_{t\le T}\bE\|g_t\|^2_{m-1,\ell_2}+ \sup_{t\le T}\bE \int_{\bR^d}\|o_t(z)\|^2_{m-1}\pi_2(dz)\le K.
$$
 Then there is a constant $\lambda=\lambda(d,m,K,T,\varkappa,\kappa_m^2)$ such that for all $s,t\in [0,T]$,
\begin{align}   \label{eq:EstSPIDETime}
\bE\|u_t-u_s\|_{m-1}^2\le
\lambda|t-s|.
\end{align}
\end{proposition}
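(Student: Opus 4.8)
The plan is to work directly from the integral (It\^o) form of \eqref{eq:SPIDE} in the space $H^{m-1}$. Fix $0\le s\le t\le T$ (the reverse case is symmetric). By Theorem \ref{th:SPIDEExist} the solution is a c\`adl\`ag $H^m$-valued process with $\bE\sup_{r\le T}\|u_r\|_m^2+\bE\int_{]0,T]}\|u_r\|_{m+1}^2\,dr\le N\kappa_m^2$, and, as the coefficient bounds below show, the drift, diffusion and jump amplitudes take values in $H^{m-1}$, $H^{m-1}(\ell_2)$ and $H^{m-1}(\pi_2)$ respectively; hence \eqref{eq:SPIDE} holds as an equation between $H^{m-1}$-valued processes and
\begin{align}
u_t-u_s &=\int_{]s,t]}\big((\cL_r+I)u_r+f_r\big)\,dr+\sum_{\varrho=1}^\infty\int_{]s,t]}\big(\cN_r^\varrho u_r+g_r^\varrho\big)\,dw_r^\varrho\\
&\quad+\int_{]s,t]}\int_{\bR^d}\big(\cI(z)u_{r-}+o_r(z)\big)\,q(dr,dz).
\end{align}
Taking the $H^{m-1}$-norm squared, then expectations, and using $\|a+b+c\|^2\le 3(\|a\|^2+\|b\|^2+\|c\|^2)$, it suffices to bound the three mean-square norms separately.

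The required operator bounds are as follows. Splitting $I=I_\delta+I_{\delta^c}$ and using that translation is an isometry on $H^{m-1}$ together with Minkowski's integral inequality, the representation of $I_\delta$ gives $\|I_\delta u_r\|_{m-1}\le\tfrac12\varsigma_1(\delta)\|u_r\|_{m+1}$, while $\|I_{\delta^c}u_r\|_{m-1}\le N\|u_r\|_m$ since $\pi_1(\{|z|>\delta\})<\infty$ by \eqref{delta complement of the levy measure}; combined with the standard bound $\|\cL_r u_r\|_{m-1}\le N\|u_r\|_{m+1}$ from Assumption \ref{asm:coeff}(i) and the Leibniz rule, this yields $\|(\cL_r+I)u_r\|_{m-1}\le N\|u_r\|_{m+1}$. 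Likewise $\|\cN_r u_r\|_{m-1,\ell_2}\le N\|u_r\|_m$. The delicate term is the non-local jump amplitude: writing $u_r(\cdot+z)-u_r=\int_0^1\sum_j z_j\partial_j u_r(\cdot+\theta z)\,d\theta$ one gets $\|\cI(z)u_r\|_{m-1}\le|z|\,\|u_r\|_m$, and trivially $\|\cI(z)u_r\|_{m-1}\le 2\|u_r\|_{m-1}$, so by \eqref{eq moments of the measure }
\begin{align}
\int_{\bR^d}\|\cI(z)u_r\|_{m-1}^2\,\pi_2(dz)\le\|u_r\|_m^2\,\varsigma_2(1)+4\|u_r\|_{m-1}^2\,\pi_2(\{|z|>1\})\le N\|u_r\|_m^2.
\end{align}

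To assemble the estimate, the drift is handled by the Cauchy--Schwarz inequality in $r$, which already produces the factor $|t-s|$:
\begin{align}
\bE\Big\|\int_{]s,t]}\big((\cL_r+I)u_r+f_r\big)\,dr\Big\|_{m-1}^2\le|t-s|\,\bE\int_{]s,t]}\big(N\|u_r\|_{m+1}^2+\|f_r\|_{m-1}^2\big)\,dr\le N\kappa_m^2\,|t-s|,
\end{align}
using the a priori bound of Theorem \ref{th:SPIDEExist} and Assumption \ref{asm:data}. For the two stochastic integrals I use the It\^o isometry (in $H^{m-1}$ for the Wiener part and in $H^{m-1}(\pi_2)$ for the Poisson part), which turn each mean-square norm into $\bE\int_{]s,t]}(\cdots)\,dr$ with no free $|t-s|$; the factor $|t-s|$ is then extracted by $\bE\int_{]s,t]}\|u_r\|_m^2\,dr\le|t-s|\,\bE\sup_{r\le T}\|u_r\|_m^2\le N\kappa_m^2|t-s|$ and, for the data, by invoking the extra hypotheses $\sup_{r\le T}\bE\|g_r\|_{m-1,\ell_2}^2\le K$ and $\sup_{r\le T}\bE\int_{\bR^d}\|o_r(z)\|_{m-1}^2\pi_2(dz)\le K$, which give $\bE\int_{]s,t]}\|g_r\|_{m-1,\ell_2}^2\,dr\le K|t-s|$ and the analogous bound for $o$. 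Summing the three contributions yields $\bE\|u_t-u_s\|_{m-1}^2\le\lambda|t-s|$ with $\lambda$ of the stated form.

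The main obstacle is the interplay between the non-local terms and the demand for a clean linear factor $|t-s|$. For the Wiener and Poisson increments the It\^o isometry gives only an $L^1(dr)$ control of the integrand, so linear-in-time decay cannot be read off from the $L^2(d\bP\times dr)$ a priori norms of $g$ and $o$ alone; this is precisely why the additional sup-in-time assumptions on $g$ and $o$ are imposed, and the same device (via $\bE\sup_{r\le T}\|u_r\|_m^2\le N\kappa_m^2$) is what lets the $u$-dependent parts contribute a factor $|t-s|$. The remaining care lies in controlling $\cI(z)u_r$ and $I u_r$ uniformly against the measures that are singular near the origin, which is resolved by the two-sided bound $\|\cI(z)u_r\|_{m-1}\le\min(|z|\,\|u_r\|_m,\,2\|u_r\|_{m-1})$ together with \eqref{eq moments of the measure } and the representation of $I_\delta$.
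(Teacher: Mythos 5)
Your proposal is correct and follows essentially the same route as the paper: both write the increment $u_t-u_s$ via the It\^o form of the equation in $H^{m-1}$, apply the Cauchy--Schwarz (Jensen) inequality to the drift integral and the It\^o isometry to the Wiener and Poisson integrals, and then extract the factor $|t-s|$ from the a priori bound of Theorem \ref{th:SPIDEExist} together with the extra sup-in-time hypotheses on $g$ and $o$. The only cosmetic difference is that the paper phrases the argument through the abstract operators $A$, $B$, $\mathcal{C}$ of \eqref{eq:abstractevolution} and the growth bounds already established in the proof of Theorem \ref{th:SPIDEExist}, whereas you re-derive the concrete operator estimates (including the two-sided bound on $\cI(z)u$), which the paper leaves implicit.
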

\begin{assumption}      \label{asm:boundedfreeterm}
For $m\ge 3$, in addition to Assumption \ref{asm:data}, there exists a random variable  $\xi$ with  $\bE\xi<K$ such that for all $\omega\in \Omega, \ t,s \in[0,T]$,
$$
\|g_t\|^2_{m-1,\ell_2}+ \int_{\bR^d}\|o_t(z)\|^2_{m-1}\pi_2(dz)\le \xi 
$$
$$
\|f_t-f_s\|_{m-2}^2 +
\|g_t-g_s\|^2_{m-2,\ell_2}  +\int_{\bR^d}\|o_t(z)-o_s(z)\|^2_{m-1}\pi_2(dz)\le \xi|t-s|.
$$
\end{assumption}
\begin{assumption}      \label{asm:timecoeff}
For $m\ge 3$, in addition to Assumption \ref{asm:coeff} (i), there is a constant $C>0$   such that for all $(\omega,x)\in \Omega\times\bR^d$, $s,t\in [0,T]$, $i,j\in \{0,1,\ldots,d\},$
$$
|\partial^{\gamma}\left(a^{ij}_t-a^{ij}_s\right)|^2+|\partial^{\gamma}\left(\sigma^{i}_t-\sigma^{i}_s\right)|^2_{\ell_2} \le C|t-s|, \;\;\forall |\gamma|\le m-2.
$$
\end{assumption}

We turn our attention to the discretisation of equation \eqref{eq:SPIDE}. For each $h\in \bR-\{0\}$ and standard basis vector $e_{i}$, $i\in \{1,\ldots,d\}$, of $\bR^d$ we define the first-order difference operator $\delta_{h,i}$  by
$$
\delta_{h,i} \phi(x):=\frac{\phi(x+he_i)-
\phi(x)}{h},
$$
for all real-valued functions $\phi$ on $\bR^d$.  We define $\delta_{h,0}$ to be the identity operator. Notice that 
for all $\psi,\phi\in H^0$, we have
\begin{equation}        \label{eq:conjugatedelh}
(\phi,\delta_{-h,i}\psi)_0=-(\delta_{h,i}\phi,\psi)_0.
\end{equation}
Set $$\delta^{h}_i:=\frac{1}{2}(\delta_{h,i}+\delta_{-h,i})$$ and observe that for all $\phi\in H^0$,
\begin{equation}        \label{eq:symdifbillinzero}
(\phi,\delta^h_i\phi)_0=0.
\end{equation}
For each $h\ne 0$, we introduce the grid $\bG_h:=\{hz_k: z_k \in \bZ^d, k\in \bN_0,z_0=0\}$
with step size $|h|$.
Let $\ell_2(\bG_h)$ be the Hilbert space of real-valued functions $\phi$ on $\bG_h$ such that 
$$
\|\phi\|_{\ell_2(\bG_h)}^2:=|h|^d\sum_{x\in\bG_h}|\phi(x)|^2<\infty.
$$
We approximate the operators $\cL$ and $\cN^\varrho$ by 
$$
\cL^h_t\phi(x):=\sum_{i,j=0}^da^{ij}_t(x)\delta_{h,i} \delta_{-h,j}\phi(x) \;\;\textrm{and} \;\; \cN_t^{\varrho;h}\phi(x):=\sum_{i=0}^d\sigma^{i\varrho}_t(x) \delta_{h,i}\phi(x),
$$
respectively.
In order to approximate $I$, we approximate $I_{\delta}$ and $I_{\delta^c}$ separately. 
For each $k\in \bN\cup \{0\}$ and $h \neq 0$, define  the rectangles in $\bR^d$
$$
A^h_k:=\left(z^1_k|h|-\frac{|h|}{2},z^1_k|h|+\frac{|h|}{2}\right]\times \cdots \times \left(z^d_k|h|-\frac{|h|}{2},z^d_k|h|+\frac{|h|}{2}\right],
$$
where $z^i_k$, $i\in \{1,...,d\},$ are the coordinates of $z_k\in \bZ^d$, and set 
$$
B^h_k:= A^h_k \cap \{|z| \leq \delta \}, \quad \bar{B}^h_k:= A^h_k \cap \{|z| > \delta \}.
$$
We approximate  $I_{\delta^c}$ by 
\begin{equation}        \label{eq:defIdeltacomp}
I^h_{\delta^c} \phi(x):= \sum_{k=0}^\infty \left(\left(\phi(x+hz_k)-\phi(x)\right)\bar{\zeta}_{h,k} - \sum_{i=1}^d \bar{\xi}_{h,k}^i \delta^{h}_i \phi(x)\right),
\end{equation}
where $$\bar{\zeta}_{h,k}:=\pi_1(\bar{B}^h_k)\quad \textrm{and}\quad \bar{\xi}^i_{h,k}:=\int_{\bar{B}^h_k\cap \{|z|\leq 1\}} z^i \pi_1(dz).$$ We continue with the approximation of the operator $I_\delta.$  
By \eqref{motivator}, for all $x\in \bG_h$,  
$$
I_{\delta}\phi(x) =\sum_{k=0}^{\infty} \int_{B_k^h}\int_0^1\sum_{i,j=1}^d z^iz^j\partial_{ij}\phi(x+\theta z)(1-\theta)d\theta\pi_1(dz),
$$
where there are only a finite number of non-zero terms in the infinite sum over $k$.  The closest point in $\bG_h$ to any point $z\in B_k^h$ is clearly $hz_k$.  This simple observation leads us to the following (intermediate) approximation of $I_{\delta}\phi(x)$:
$$
\sum_{k=0}^{\infty}\int_0^1\sum_{i,j=1}^d \int_{B_k^h}z^iz^j\pi_1(dz) \partial_{ij}\phi(x+\theta hz_k)(1-\theta)d\theta.
$$
However, in order to ensure that our approximation is well-defined for functions $\phi \in \ell_2(\bG_h)$, we need to approximate the integral over $\theta\in [0,1]$. Fix  $k \in \bN_0$ and  $h\neq 0$. Consider the directed line segment 
$
\{\theta hz_k:\theta \in [0,1]\}
$
extending from the origin to the point $hz_k\in \bR^d$. It is clear that this line segment intersects a  unique finite sequence of rectangles  from the set  $\{A^{h}_{\bar k}\}_{\bar k\in \bN_0}$. Denote the number  of rectangles by $\chi(h,k)$. Since the line's start point  is the origin, the first rectangle it intersects is $A^h_0$, and since the line's endpoint is  $hz_k$, the last rectangle it intersects is $A^h_k$, the  center of which is the point $hz_k$.   If $\chi(h,k)>2$, then in between these two rectangles, the line segment  intersects $\chi(h,k)-2$ additional rectangles from the set $\{A^{h}_{\bar k}\}_{\bar k\in \bN_0}- \{A^h_0,  A^h_k\}$.  Denote the indices of these rectangles by  $r^{h,k}_l$, $l\in \{2,\ldots,\chi(h,k)-1\}$, and set  $r^{h,k}_1=0$ and  $r^{h,k}_{\chi(h,k)}=k$; that is, $\{\theta hz_k; \theta\in [0,1]\}\subseteq \cup_{l=1}^{\chi(h,k)}A^h_{r^{h,k}_l}$.  Corresponding to the set of rectangles $\{A^{h}_{r^{h,k}_l}\}_{l=1}^{\chi(h,k)}$ is a partition $0=\theta^{h,k}_0 \leq \dots \leq \theta^{h,k}_{\chi(h,k)}=1$ of the interval $[0,1]$ such that  for each $l\in \{1,\ldots,\chi(h,k)\}$ and $\theta \in (\theta^{h,k}_{l-1},\theta^{h,k}_{l})$, $\theta hz_k \in A^h_{r^{h,k}_l}$.
 Since the diagonal of a $d$-dimensional hypercube with side length $|h|$ has length $\sqrt{d}h$,   for each $k\in \bN_0$, $z\in B_k^h$, and  $l\in \{1,\ldots,\chi(h,k)\}$,  
\begin{equation}              \label{Remark In the same box}
|\theta z - hz_{r^{h,k}_l}|\leq |\theta z-\theta  h z_k|+|\theta h z_k - h z_{r^{h,k}_l}| \leq \sqrt{d}|h|,
\end{equation}
for all $\theta \in (\theta_{l-1}^{h,k},\theta_{l}^{h,k})$. 
Set
 $$
\zeta^{ij}_{h,k} = \int_{B^h_k} z^iz^j \pi_1(dz), \quad \bar{\theta}^{h,k}_l= \int_{\theta^{h,k}_{l-1}}^{\theta^{h,k}_{l}} (1-\theta) d \theta
$$
and define the operator
\begin{equation}                           \label{eq definition I^h_delta}
I^h_\delta \phi(x)=:\sum_{k=0}^\infty \sum_{l=1}^{\chi(h,k)} \bar{\theta}^{h,k}_l \sum_{i,j=1}^d \zeta^{ij}_{h,k} \delta_{h,i} \delta_{-h,j} \phi(x+ h z_{r^{h,k}_l}),
\end{equation}
  where there are only a finite number of non-zero terms in the infinite sum over $k$.  
Set $I^h=I^h_\delta+I^h_{\delta^c}$  and 
 introduce the  martingales 
$$
p^{h,k,i}_t=\int_{]0,t]} \int_{B^h_k}z^iq(dt,dz), \quad \bar{p}^{h,k}_t=q (\bar{B}^h_k,]0,t]).
$$
Moreover, set $$\tilde{\theta}^{h,k}_l:= \theta^{h,k}_{l+1}-\theta^{h,k}_l.$$ 

\indent Let  $\cT\ge 1$ be an integer and set $\tau=T/\cT$ and $t_n=n\tau$ for $i\in \{0,1,\ldots,\cT\}$.  For any  $\bF$-martingale $(p_t)_{t\le T}$, we use the notation $\Delta p_{n+1}:=p_{t_{n+1}}-p_{t_n}$.  Define recursively the $\ell_2(\bG_h)$-valued random variables $(\hat{u}^{h,\tau}_{n})_{n=0}^\cT$  by 
\begin{align}             \label{explicit:discretised equation on the grid}
\hat{u}^{h,\tau}_{n}(x)=&\hat{u}^{h,\tau}_{n-1}(x)+\left((\cL^h_{t_{n-1}}+ I^h)\hat{u}^{h,\tau}_{n-1}(x)+f_{t_{n-1}}(x)\right)\tau+\sum_{\varrho=1}^\infty(\cN^{\varrho;h}_{t_{n-1}}\hat{u}_{n-1}^{h,\tau}(x)+g^\varrho_{t_{n-1}}(x) )\Delta w^{\varrho}_{n}\nonumber\\
&\quad+\sum_{k=0}^\infty\sum_{i=1}^d \left(\sum_{l=1}^{\chi(h,k)}\tilde{\theta}^{h,k}_l \delta_{h,i}\hat{u}^{h,\tau}_{n-1}(x+hz_{r^{h,k}_l})\right)\Delta p^{h,k,i}_{n}+\int_{\bR^d}o_{t_{n-1}}(x,z)q(]t_{n-1},t_n],dz)\nonumber\\
&\quad+\sum_{k=0}^\infty\left(\hat{u}^{h,\tau}_{n-1}(x+hz_k)-\hat{u}_{n-1}^{h,\tau}(x)\right)\Delta\bar{p}^{h,k}_{n}, \quad n\in \{1,\ldots,\cT\},
\end{align}
 with initial condition 
$$
\hat{u}^{h,\tau}_0(x)= \varphi(x), \;\;x\in\bG_h
$$
It is clear that $\hat{u}^{h,\tau}_{n}$ is $\cF_{t_n}$-measurable for every $n\in \{0,1,\ldots,\cT\}$.
Define the operators
$$
\tilde{\cL}_t^h\phi=\sum_{i,j=0}^d a_t^{ij}\delta_{h,i}\delta_{-h,j}\phi-\pi_1(\{|z|> \delta\})\phi-\sum_{i=1}^d\int_{\delta<|z|\le 1}z^i\pi_1(dz)\delta^h_i\phi
$$
and
$$
\tilde{I}^h_{\delta^c}\phi=\sum_{k=0}^\infty \phi(x+hz_k)\overline{\zeta}_{h,k}
$$
and note that $\tilde{\cL}^h+\tilde{I}^h_{\delta^c}+I_{\delta}=\cL^h+I^h$. 
On $\bG_h$, we  also consider the following  implicit-explicit discretization scheme of \eqref{eq:SPIDE}:
\begin{align}    \label{implicit:discretised equation on the grid}
\hat{v}^{h,\tau}_{n}(x)=&\hat{v}^{h,\tau}_{n-1}(x)+\left((\tilde{\cL}^h_{t_n}+ I^h_{\delta})\hat{v}^{h,\tau}_{n}(x)+\tilde{I}_{\delta^c}^hv^{h,\tau}_{n-1}(x)+f_{t_n}(x)\right)\tau\nonumber\\
& \quad+\mathbf{1}_{n> 1}\sum_{\varrho=1}^\infty(\cN^{\varrho;h}_{t_{n-1}}\hat{v}_{n-1}^{h,\tau}(x)+g^\varrho_{t_{n-1}}(x) )\Delta w^{\varrho}_{n}\nonumber\\
&\quad+\mathbf{1}_{n> 1}\sum_{k=0}^\infty\sum_{i=1}^d \left(\sum_{l=1}^{\chi(h,k)}\tilde{\theta}^{h,k}_l \delta_{h,i}\hat{v}^{h,\tau}_{n-1}(x+hz_{r^{h,k}_l})\right)\Delta p^{h,k,i}_{n}+\int_{\bR^d}o_{t_{n-1}}(x,z)q(]t_{n-1},t_n],dz)\nonumber\\
&\quad+\mathbf{1}_{n> 1}\sum_{k=0}^\infty\left(\hat{v}^{h,\tau}_{n-1}(x+hz_k)-\hat{v}_{n-1}^{h,\tau}(x)\right)\Delta\bar{p}^{h,k}_{n},\quad n\in \{1,\ldots,\cT\},
\end{align}
with initial condition 
$$
\hat{v}^{h,\tau}_0(x)= \varphi(x), \;\;x\in\bG_h,
$$
where $\mathbf{1}_{n> 1}=0$ if $n=1$ and $\mathbf{1}_{n> 1}=1$ if $n\ge 2$. 
A solution $(\hat{v}^{h,\tau}_n)_{n=0}^M$ of   \eqref{implicit:discretised equation on the grid} is  understood as a sequence of $\ell_2(\bG_h)$-valued random variables  such that  $\hat{v}^{h,\tau}_n$ is $\cF_{t_n}$-measurable for every $n\in\{0,1,\ldots,M\}$ and satisfies \eqref{eq:SPIDE}.
\begin{remark}  \label{rem:embeddingoffreeterms}
Under Assumptions \ref{asm:data} and \ref{asm:boundedfreeterm}, for $m>2+d/2$, by virtue of the embedding $H^{m-2} \hookrightarrow \ell_2(\bG^h)$,  the free-terms $f$, $g$, and $o(z)$ are continuous $\ell_2(\bG^h)$ valued processes, and consequently the above schemes make sense. Moreover, for $0<|h|<1$, there is a constant $N$ independent of $h$ such that 
-\begin{equation}        \label{eq: embedding}
\|\phi\|_{\ell_2(\bG^h)}\leq N \|\phi\|_{m-2}.
\end{equation}
\end{remark}

\begin{assumption}      \label{asm:tauhbound}
The parameters $h\ne 0$ and  $\cT$ are such that
\begin{equation}\label{ineq:tauoverh2bound}
d\frac{\tau}{h^2}< \frac{\varkappa-\varsigma(\delta)}{\left(2  \Gamma+\varsigma_1(\delta)
\right)^2},
\end{equation}
where $\Gamma:= \left(\sup_{t,x,\omega}\sum_{i,j=1}^d|a^{ij}(x)|^2\right)^{1/2}$.

\end{assumption}
The following are  our main theorems. 
\begin{theorem}                                \label{thm main theorem explicit}
Let Assumptions \ref{asm:coeff} through \ref{asm:timecoeff} hold with $m>2+\frac{d}{2}$ and let Assumption \ref{asm:tauhbound} hold. Let $u$  be the  solution of \eqref{eq:SPIDE} and let $(\hat{u}^{h,\tau}_n)_{n=0}^{\cT}$ be defined by \eqref{explicit:discretised equation on the grid}. Then there is a constant $N=N(d,m,\varkappa,K,T, C,\lambda, \kappa_m^2,\delta)$  such that for any real number $h$ with $0<|h|<1$,
\begin{align*}
\bE\max_{0\le n\le \cT}&\sup_{x\in \bG_h}|u_{t_n}(x)-\hat{u}^{h,\tau}_n(x)|^2+\bE\max_{0\le n\le \cT}\|u_{t_n}-\hat{u}^{h,\tau}_n\|_{\ell_2(\bG_h)}^2\le N \left(|h|^2+ \tau\right).
\end{align*}
\end{theorem}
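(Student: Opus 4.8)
The plan is to follow the Sobolev-space strategy of \cite{Yo00}: I would first derive the error estimate in the $H^{m-2}$-norm on all of $\bR^d$ and convert it into the pointwise and $\ell_2(\bG_h)$ estimates only at the very end, by means of the embeddings recorded in Remark \ref{rem:embeddingoffreeterms}. Since $m>2+\tfrac d2$, every $\phi\in H^{m-2}$ satisfies both $\sup_{x\in\bG_h}|\phi(x)|\le N\|\phi\|_{m-2}$ and \eqref{eq: embedding}, with $N$ independent of $h$, so it suffices to prove
\begin{equation}
\bE\max_{0\le n\le\cT}\|u_{t_n}-\hat u^{h,\tau}_n\|_{m-2}^2\le N(|h|^2+\tau).
\end{equation}
Here I read $\hat u^{h,\tau}_n$ as the restriction to $\bG_h$ of the function-valued scheme obtained by running the recursion \eqref{explicit:discretised equation on the grid} for every $x\in\bR^d$ from the initial datum $\varphi\in H^m$; since the recursion at a grid point references only values on $x+h\bZ^d\subseteq\bG_h$, the two agree there, and the Sobolev analysis may be carried out on $\bR^d$.

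To separate the spatial and temporal contributions I would introduce the spatially semi-discretized equation on $\bR^d$ — equation \eqref{eq:SPIDE} with $\cL_t,I,\cN^\varrho_t$ and the jump operator $\cI(z)$ replaced by their finite-difference analogues $\cL^h_t,I^h,\cN^{\varrho;h}_t$ and the jump discretization of \eqref{explicit:discretised equation on the grid}, keeping the same free terms and initial datum — and denote its solution by $u^h$. The first task is to show that this equation is well posed in a variational triple with pivot $H^{m-2}$ and obeys the a priori estimate of Theorem \ref{th:SPIDEExist} with a constant \emph{independent of $h$}. The crucial point is that the discrete operators are coercive uniformly in $h$: summation by parts \eqref{eq:conjugatedelh} turns $(\cL^h_t\phi,\phi)_0$ into a discrete Dirichlet form, the symmetry identity \eqref{eq:symdifbillinzero} annihilates the first-order transport terms, and the singular part of $I^h$ contributes at most $\varsigma_1(\delta)$; together with Assumption \ref{asm:coeff}(ii) and the choice $\varsigma(\delta)<\varkappa$ from \eqref{delta of the levy measure}, this gives a coercivity constant bounded below by $\varkappa-\varsigma(\delta)>0$, uniformly in $h$. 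For the spatial rate I would then write the equation for $u-u^h$, which has zero initial condition and is driven by the consistency residuals $(\cL_t-\cL^h_t)u_t$, $(I-I^h)u_t$ and $(\cN^\varrho_t-\cN^{\varrho;h}_t)u_t$. A Taylor expansion shows each finite-difference operator is first-order consistent, so every residual is bounded in $\|\cdot\|_{m-2}$ by $N|h|\,\|u_t\|_{m+1}$; for the integral operators this rests on the geometric estimate \eqref{Remark In the same box}, which places each sampled point $hz_{r^{h,k}_l}$ within $\sqrt d|h|$ of the corresponding $\theta z$. Since Theorem \ref{th:SPIDEExist} furnishes $\bE\int_{]0,T]}\|u_s\|_{m+1}^2\,ds\le N\kappa_m^2$, the $h$-uniform a priori estimate applied to the error equation yields $\bE\sup_{t\le T}\|u_t-u^h_t\|_{m-2}^2\le N|h|^2$.

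For the temporal rate I would compare $u^h_{t_n}$ with the explicit Euler step $\hat u^{h,\tau}_n$. Writing the recursion for $\epsilon_n:=u^h_{t_n}-\hat u^{h,\tau}_n$, squaring, and taking expectations, the parabolic cross term is controlled by the uniform coercivity, while the martingale increments contribute their predictable quadratic variations; the local truncation error per step is of size $N\tau$ by the time-regularity of $u^h$ (the analogue of Proposition \ref{prop:SPIDETimeReg}, which holds with $h$-independent constants) together with the time-regularity of the coefficients and free terms from Assumptions \ref{asm:boundedfreeterm} and \ref{asm:timecoeff}. A discrete Gronwall argument then gives $\bE\max_{0\le n\le\cT}\|u^h_{t_n}-\hat u^{h,\tau}_n\|_{m-2}^2\le N\tau$. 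Combining the two estimates by the triangle inequality and applying the embeddings of the first paragraph completes the proof.

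I expect the main obstacle to be the stability of the \emph{fully explicit} scheme, which is only conditionally stable. When the one-step increment is squared, the drift term produces a contribution of order $(\tau/h^2)$ against $\|\delta_{h,i}\hat u^{h,\tau}_{n-1}\|_0^2$-type quantities that naively destroys the energy balance; it can be absorbed into the negative coercive term only under the CFL-type restriction \eqref{ineq:tauoverh2bound} of Assumption \ref{asm:tauhbound}, whose numerator $\varkappa-\varsigma(\delta)$ and denominator built from $\sup|a^{ij}_t|$ and $\varsigma_1(\delta)$ are precisely tuned to make the absorption work. Keeping track of the $I^h_\delta$-increment, which couples many shifted values through the partition $\{\theta^{h,k}_l\}$, while keeping every constant independent of $h$ and $\tau$, is the delicate part of this step.
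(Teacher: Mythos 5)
Your overall frame---prove the estimate in $H^{m-2}$ on $\bR^d$, identify the $\bR^d$-valued scheme with the grid scheme, and finish with the Sobolev embedding and \eqref{eq: embedding}---matches the paper's. Your error decomposition, however, is different: you insert a continuous-time, spatially semi-discretized equation $u^h$ and split the error as $(u-u^h)+(u^h_{t_n}-\hat u^{h,\tau}_n)$, whereas the paper compares $u_{t_n}$ with the fully discrete $\bR^d$-scheme $u^{h,\tau}_n$ in one step: $e^{h,\tau}_n=u^{h,\tau}_n-u_{t_n}$ is shown to satisfy the perturbed scheme \eqref{discretised equation on the R^d with free term:explicit}, whose free terms $F^i,G,R$ collect the spatial consistency errors (Lemmas \ref{lem estimate derivatives}--\ref{lem:NoiseOpEst}) and the temporal ones (Proposition \ref{prop:SPIDETimeReg}, Assumptions \ref{asm:boundedfreeterm} and \ref{asm:timecoeff}) simultaneously, and the stability Theorem \ref{thm:stability explicit} is applied once. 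Your first (spatial) step is sound in outline, though it requires you to build, in addition, an $h$-uniform well-posedness theory for the semi-discrete equation that the paper never needs.

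The genuine gap is in your temporal step. You claim the local truncation error per step is of size $N\tau$ ``by the time-regularity of $u^h$.'' The available time-regularity (the analogue of Proposition \ref{prop:SPIDETimeReg}) gives $\bE\|u^h_t-u^h_s\|_{m-1}^2\le N|t-s|$, one derivative below the working norm, while the drift truncation term $\int_{]t_{n-1},t_n]}\bigl(\cL^h_r u^h_r-\cL^h_{t_{n-1}}u^h_{t_{n-1}}\bigr)dr$ involves the second-order operator $\cL^h$, which costs two discrete derivatives. The direct bound is therefore $\|\cL^h(u^h_r-u^h_{t_{n-1}})\|_{m-2}\le N|h|^{-1}\|u^h_r-u^h_{t_{n-1}}\|_{m-1}\le N|h|^{-1}\sqrt{\tau}$ in $L^2(\Omega)$; fed into the discrete Gronwall argument over the $\cT$ steps this yields a contribution of order $\tau/h^2$, which under the CFL condition \eqref{ineq:tauoverh2bound} is merely $O(1)$ --- no convergence at all, let alone rate $\tau$. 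The missing idea, which dictates the entire structure of Theorem \ref{thm:stability explicit}, is to write this residual in divergence form $\sum_{i=0}^d\delta_{h,i}F^i_t$ with $F^i_t=\sum_j a^{ij}\delta_{-h,j}(u_{\kappa_1(t)}-u_t)+\cdots$ (commutator terms going into $F^0$, and an analogous rewriting for the $I^h_\delta$ part of the drift), so that only \emph{one} discrete derivative falls on the time increment, giving $\|F^i_t\|_{m-2}\le N\sqrt{\tau}$, while the outer $\delta_{h,i}$ is moved onto the solution by summation by parts \eqref{eq:conjugatedelh} and absorbed into the coercive term; this is exactly what the right-hand side of \eqref{main estimate:explicit}, which charges only $\|F^i_t\|_m^2$ rather than $\|\delta_{h,i}F^i_t\|_m^2$, is designed to exploit. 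Without a stability estimate of this divergence-form type, your second step does not close.
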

\begin{theorem}                                \label{thm main theorem implicit}
Let Assumptions \ref{asm:coeff} through \ref{asm:timecoeff} hold with $m>2+\frac{d}{2}$ and let $u$ be a  solution of \eqref{eq:SPIDE}. There exists a  constant $R=R(d,m,\varkappa,K,\delta)$ such that if $\cT>R$, then there exists a unique  solution $(\hat{v}^{h,\tau}_n)_{n=0}^{\cT}$ of \eqref{implicit:discretised equation on the grid} and a constant $N=N(d,m,\varkappa,K,T, C,\lambda, \kappa_m^2,\delta)$ such that  for any real number $h$ with $0<|h|<1$, 
\begin{align*}
\bE\max_{0\le n\le \cT}&\sup_{x\in \bG_h}|u_{t_n}(x)-\hat{v}^{h,\tau}_n(x)|^2+\bE\max_{0\le n\le \cT}\|u_{t_n}-\hat{v}^{h,\tau}_n\|^2_{\ell_2(\bG_h)}\le N \left(|h|^2+ \tau\right).
\end{align*}
\end{theorem}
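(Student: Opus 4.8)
Since $m-2>d/2$, the Sobolev embedding theorem together with Remark \ref{rem:embeddingoffreeterms} gives, for $0<|h|<1$ and any $\phi\in H^{m-2}$, both $\sup_{x\in\bG_h}|\phi(x)|\le N\|\phi\|_{m-2}$ and $\|\phi\|_{\ell_2(\bG_h)}\le N\|\phi\|_{m-2}$, with $N$ independent of $h$. Because the difference and jump-approximation operators $\delta_{h,i}$, $\tilde{\cL}^h_t$, $I^h_\delta$, $\tilde I^h_{\delta^c}$, $\cN^{\varrho;h}_t$ all map grid points to grid points, it suffices to run the recursion \eqref{implicit:discretised equation on the grid} for $H^m$-valued random variables over all of $\bR^d$ (its restriction to $\bG_h$ then solves the original grid scheme) and to prove the single Sobolev estimate
\[
\bE\max_{0\le n\le \cT}\|u_{t_n}-\hat v^{h,\tau}_n\|^2_{m-2}\le N(|h|^2+\tau).
\]
I would first dispose of existence and uniqueness. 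Rewriting \eqref{implicit:discretised equation on the grid} as $(1-\tau(\tilde{\cL}^h_{t_n}+I^h_\delta))\hat v^{h,\tau}_n=w_n$, where $w_n$ is $\cF_{t_n}$-measurable and built from $\hat v^{h,\tau}_{n-1}$ and the data, the key point is that a discrete integration by parts based on \eqref{eq:conjugatedelh} and \eqref{eq:symdifbillinzero}, together with Assumption \ref{asm:coeff} and the smallness $\varsigma(\delta)<\varkappa$, yields an upper bound $(\phi,(\tilde{\cL}^h_t+I^h_\delta)\phi)_0\le K_0\|\phi\|^2_0$ with $K_0$ \emph{independent of $h$}. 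Hence for $\tau K_0<1$, that is for $\cT>R$ with $R=R(d,m,\varkappa,K,\delta)$, the bilinear form of $1-\tau(\tilde{\cL}^h_{t_n}+I^h_\delta)$ is bounded (for fixed $h$) and coercive on $\ell_2(\bG_h)$, so the Lax--Milgram theorem produces a unique $\hat v^{h,\tau}_n$; the same computation in $H^{j}$, $j\le m$, yields $\hat v^{h,\tau}_n\in H^m$ with the attendant moment bounds.

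Next I would set up the error equation. Writing $e_n:=u_{t_n}-\hat v^{h,\tau}_n$ and integrating \eqref{eq:SPIDE} over $(t_{n-1},t_n]$, subtraction from \eqref{implicit:discretised equation on the grid} produces a recursion of the schematic form $e_n=e_{n-1}+\tau(\tilde{\cL}^h_{t_n}+I^h_\delta)e_n+\tau\rho_n+\mu_n$, where $\rho_n$ collects the deterministic one-step errors and $\mu_n$ the martingale increments. The term $\rho_n$ splits into a \emph{spatial} part, controlled by the first-order consistency estimates of the type $\|(\cL^h_t-\cL_t)\phi\|_{m-2}+\|(I^h-I)\phi\|_{m-2}+\|(\cN^{\varrho;h}_t-\cN^{\varrho}_t)\phi\|_{m-2}\le N|h|\,\|\phi\|_{m}$ (the jump estimates relying on the box-diameter bound near the origin and the moment conditions on $\pi_r$), evaluated on $u$ and bounded through Theorem \ref{th:SPIDEExist}; and a \emph{temporal} part, coming from freezing the coefficients and the solution at grid times and from the implicit/explicit splitting, controlled by Proposition \ref{prop:SPIDETimeReg} and Assumptions \ref{asm:boundedfreeterm} and \ref{asm:timecoeff}. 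After summation these give $\sum_n\tau\,\bE\|\rho_n\|^2_{m-2}\le N(|h|^2+\tau)$. The device $\mathbf 1_{n>1}$ only drops the explicit martingale terms at the very first step, contributing a single increment of size $O(\tau)$, which is harmless.

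The crux is the energy estimate. Applying $\|\cdot\|^2_{m-2}$ to the recursion and using the identity $\|e_n\|^2_{m-2}-\|e_{n-1}\|^2_{m-2}=2(e_n,e_n-e_{n-1})_{m-2}-\|e_n-e_{n-1}\|^2_{m-2}$, adapted to the implicit structure, one discards the nonnegative term $\|e_n-e_{n-1}\|^2_{m-2}$; this is exactly what avoids any Courant--Friedrichs--Lewy restriction of the form \eqref{ineq:tauoverh2bound}. A discrete G\aa rding inequality then gives, with $c>0$ and $c,N$ independent of $h$, the dissipation $(e_n,(\tilde{\cL}^h_{t_n}+I^h_\delta)e_n)_{m-2}\le -c\sum_i\|\delta_{h,i}e_n\|^2_{m-2}+N\|e_n\|^2_{m-2}$ at the implicit time level $t_n$, while, after conditional expectations kill the martingale cross terms, the quadratic variation of $\mu_n$ yields the explicit It\^o corrections $\tau\sum_\varrho\|\cN^{\varrho;h}_{t_{n-1}}e_{n-1}\|^2_{m-2}$ together with the analogous contributions of $p^{h,k,i}$ and $\bar p^{h,k}$ at level $t_{n-1}$. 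The main difficulty, and the step I expect to be the real obstacle, is to absorb these explicit noise and jump terms into the implicit dissipation \emph{uniformly in $h$}: the index mismatch (dissipation at $t_n$, corrections at $t_{n-1}$) is reconciled after summation in $n$ using $e_0=0$ and a telescoping of $\sum_n\|\delta_h e_{n-1}\|^2$, the stochastic parabolicity Assumption \ref{asm:coeff}(ii) (the sign of $2a^{ij}_t-\sum_\varrho\sigma^{i\varrho}_t\sigma^{j\varrho}_t$) together with $\varsigma(\delta)<\varkappa$ guarantees that the net second-order contribution is nonpositive, and the residual time mismatch is absorbed via Assumption \ref{asm:timecoeff}. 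This requires, in addition, controlling the commutators between the operators $\delta_{h,i}$ and multiplication by the variable coefficients, and the convergence of the infinite sums over $k$ in the jump operators, all with constants independent of $h$. A discrete Gronwall inequality then gives $\max_n\bE\|e_n\|^2_{m-2}\le N(|h|^2+\tau)$, and a discrete Doob (Burkholder--Davis--Gundy) inequality applied to the martingale part $\sum_{k\le n}\mu_k$ upgrades this to $\bE\max_n\|e_n\|^2_{m-2}\le N(|h|^2+\tau)$; the two asserted estimates follow from the embeddings recorded at the outset.
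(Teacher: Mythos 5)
Your proposal follows essentially the same route as the paper: reduction to the scheme posed on all of $\bR^d$ and identification of its restriction with the grid scheme, existence and uniqueness via boundedness and coercivity of the implicit operator $1-\tau(\tilde{\cL}^h_{t_n}+I^h_\delta)$ for $\cT$ large, an error equation whose free terms are bounded by the consistency lemmas and the time-regularity proposition (with the $\mathbf{1}_{n>1}$ device contributing a single $O(\tau)$ term), an energy estimate that discards the negative square of the implicit increment so that no CFL condition is needed, discrete Gronwall plus Burkholder--Davis--Gundy, and finally the Sobolev embedding. The only cosmetic difference is that you invoke Lax--Milgram where the paper cites Proposition 3.4 of \cite{GyMi05}, which rests on the same boundedness-plus-coercivity input.
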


\section{Numerical Simulation}

Let us consider  finite difference approximations of  the following SIDE on $[0,T]\times\bR$:
\begin{align}
du_t (x)&= \left(\left(\frac{\bar{\sigma}_1^2}{2}+\frac{\bar{\sigma}_2^2}{2}\right)\partial^2_1u_t (x)+ \int_{\bR}(u_t(x+z)-u_t(x) - \partial _1u_t(x)z)\pi(dz)\right)dt \nonumber\\
&\quad + \bar{\sigma}_2 \partial_1u_t(x)dw_t+\int_{\bR}\left(u(x+z)-u(x)\right)q(dt,dz),\nonumber\\ 
u_0(x)&= \frac{1}{\sqrt{2\pi}\bar{\sigma}_0}\exp\left(-\frac{x^2}{\bar{\sigma}_1^2\bar{\sigma}^2_0}\right), \label{eq:numericsim}
\end{align}
where $\pi(dz)=c_{-}\exp\left(-\beta_{-}z\right)\frac{dz}{|z|^{1+\alpha_{-}}}\mathbf{1}_{(-\infty,0)}(z)+c_{+}\exp\left(-\beta_{+}z\right)\frac{dz}{|z|^{1+\alpha_{+}}}\mathbf{1}_{(0,\infty)}(z).$
It is easily verified  that  for $(t,x)\in [0,T]\times \bR$, $$v_t(x) = \frac{1}{\sqrt{\pi(2\bar{\sigma}_0^2+4t)}}\exp\left(\frac{x^2}{\bar{\sigma}_1^2(\bar{\sigma}_0^2+2t)}\right)$$
solves
$$
dv_t(x) = \frac{\bar{\sigma}_1^2}{2}\partial_1^2v_t(x)dt, \quad 
v_0(x) =  \frac{1}{\sqrt{2\pi}\bar{\sigma}_0}\exp\left(-\frac{x^2}{\bar{\sigma}_1^2\bar{\sigma}^2_0}\right).
$$
Moreover, applying It{\^o}'s formula, we find that \begin{equation}\label{eq:truesolution} u_t(x)= v_t\left(x+\bar{\sigma}_2 w_t + \int_{\bR} zq(dt,dz)\right) \end{equation}
solves  \eqref{eq:numericsim}.  Thus, we can compare our finite difference approximations with \eqref{eq:truesolution}. 

In our numerical simulations, we used  MATLAB 2013a and made the following parameter specification:  $$\bar{\sigma}_1 = \frac{1}{2}, \;\;\bar{\sigma}_2 = \frac{1}{4},\;\; \bar{\sigma}_0=\frac{1}{2}, \;\; \;c_{-}=c_{+}=1,  \;\;\beta_{-}=\beta_{+} = 1, \;\; \;\;\alpha_{-}=\alpha_{+}=1.1, \;\; T=1.$$
We also made a few practical simplifications.  Both the explicit and implicit-explicit approximations were assumed to take  the value zero on $(-\infty, 8]\cup [8, \infty)$.  We  also restricted the support of $\pi(dz)$ to $[-3,3]$. We would like to investigate the associated error with these reductions in the future. We also  mention that a good heuristic is to choose the size of domain and terminal time $T$ according to the exit time of the diffusion associated with the drift of the SIDE. In fact, it is more than a heuristic and we aim to  address this in a future work. 

In our simulation, we took $\delta = \frac{1}{100}$. It follows that 
$
\kappa = \bar{\sigma}_1^2=\frac{1}{2}
$
and
\begin{align*}
\varsigma(\delta)&=c_-\int_{0}^{\delta}\exp(-\beta_{-}z)z^{1-\alpha_{-}}dz+c_+\int_{0}^{\delta}\exp(-\beta_+z)z^{1-\alpha_+}dz+z\\
&=c_-\beta_-^{\alpha_--2}\gamma(2-\alpha_-,\beta_-\delta)+c_+\beta_+^{\alpha_+-2}\gamma(2-\alpha_+,\beta_+\delta)\approx 0.0082,
\end{align*}
where $\gamma(\eta,z)$ denotes the lower incomplete gamma function.
Thus, the right-hand-side of \eqref{ineq:tauoverh2bound} is  approximately 1.0559, and hence we can always set $\tau = h^2$. The quantities $\zeta_{h,k}^{11},  \bar{\zeta}_{h,k},$ and  $\xi^1_{h,k}$ can all be calculated  using  MATLAB's built-in  upper and lower incomplete gamma functions, or by implementing an appropriate numerical integration procedure. The calculation of $\theta^{h,k}_l, \bar{\theta}^{h,k}_l, $ and $\tilde{\theta}^{h,k}_l$ are all straightforward in one-dimension. Some more thought would need to spent on how to calculate these quantities in higher dimensions. Of course as an alternative, one could set $\delta = \frac{h}{2}$, but then the  schemes are not guaranteed to converge as $h$ tends to zero. This is the drawback of taking $\delta =\frac{h}{2}$ and not including the additional terms in $I_{\delta}$ (see the paragraph at the bottom of page 1620 in \cite{CoVo05}). It does seem that the method we propose to discretise $I_{\delta}$   is novel in this respect.
In our error analysis, we  have considered  $h \in \{2^{-2}, 2^{-3}, 2^{-4}, 2^{-5}, 2^{-6}, 2^{-7}\}$ and $\tau = h^2$.

The term 
$$
\int_{|z|>\delta }\left(u_t(x+z)-u_t(x)\right)\pi(dz)
$$
in the drift of  \eqref{eq:numericsim} can be cancelled with the compensator of the compensated Poisson random measure term. 
We get a similar cancellation in the corresponding finite difference equations, and thus we can replace  $\bar{p}^{h,k}_t=q (\bar{B}^h_k,]t_{n},t_{n+1}])$ with $\hat{p}^{h,k}_t=p (\bar{B}^h_k,]t_{n},t_{n+1}])$ in the explicit \eqref{explicit:discretised equation on the grid} and implicit-explicit \eqref{implicit:discretised equation on the grid}  scheme.

In order to simulate  $$
\Delta p^{h,k}_n=\int_{]t_{n},t_{n+1}]} \int_{B^h_k}zq(dt,dz), \quad \hat{p}^{h,k}_t=p (\bar{B}^h_k,]t_{n},t_{n+1}]),
$$
for the finest time step size $\tau = 2^{-14}$, we used the algorithm discussed in  Section 4 of \cite{KaMa11}. In this algorithm, a parameter $\epsilon$ is chosen for which the process $\Delta p^{h,0}_n=\int_{]0,t]}\int_{|z|<\epsilon}z q(dt,dz)$ is approximated by a Wiener process with infinitesimal variance  $\int_{|z|<\epsilon}z^2 \pi(dz)$.  We chose the parameter $\epsilon=2^{-8},$ which is  one-half times the smallest step size $h$ under consideration in our error analysis.  The process $\int_{]0,t]}\int_{|z|>\epsilon}z q(dt,dz)=\int_{]0,t]}\int_{|z|>\epsilon}z p(dt,dz)$ (we have used symmetry of the measure $\pi(dz)$)) is  a  compound Poisson process with jump intensity 
$$
\lambda: = 2\int_{\epsilon}^3 \pi(dz)\approx  68.9676
$$
and jump-size density 
$$
\bar{f}(z)= \frac{1}{\lambda}\left(c_{-}\exp\left(-\beta_{-}z\right)\frac{dz}{|z|^{1+\alpha_{-}}}\mathbf{1}_{(-3,2^{-8})}(z)+c_{+}\exp\left(-\beta_{+}z\right)\frac{dz}{|z|^{1+\alpha_{+}}}\mathbf{1}_{(2^{-8},3)}(z)\right).
$$
The underlying Poisson process was simulated using  MATLAB's built-in Poisson random variable generator; of course there are  other simple methods that one can use as an alternative (e.g. exponential times or uniform times for fixed number of jumps).  We sampled random variables from the density $\bar{f}$ by sampling the positive and negative parts separately  and using  an acceptance-rejection algorithm with a Pareto random variable. We refer to \cite{KaMa11} for more details. Once we simulated the point process on $[0,T]\times [-3,-\epsilon]\cup [\epsilon,3]$, we  then computed  $\int_{]0,t]}\int_{|z|>\epsilon}z p(dt,dz)$.  In order to compute  $\hat{p}^{h,k}_t=p (\bar{B}^h_k,]t_{n},t_{n+1}])$, we ran a histogram with the  intervals   $\bar{B}^h_k$.
 
 The quantity   $\Delta p^{h,k}_n=\int_{]t_{n},t_{n+1}]} \int_{B^h_k}zq(dt,dz)$ is zero for $k\ne 0$ when $h<\frac{\delta}{2}$ (for $h\in\{2^{-2},2^{-3},2^{-4},2^{-5}\})$ since $B^h_k=\emptyset $ for $k\ne 0$ when $h<\frac{\delta}{2}$. For $h\in \{2^{-6},2^{-7}\}$,  $\Delta p^{h,k}_n$ is non-zero for $k\in\{-1,0,1\}$.  
 A similar analysis holds for the quantity $ \zeta^{11}_{h,k}$.    As mentioned above, we set $\hat{p}^{h,0}_t$ equal to the Wiener process approximating the small jumps. To compute $\int_{]t_{n},t_{n+1}]} \int_{B^h_k}zq(dt,dz)$ for $k\in \{-1,1\}$ in the case  $h\in \{2^{-6},2^{-7}\},$ we summed the jump sizes in their respective bins  and compensated.  To obtain the above quantities for coarser time step sizes,  we cumulatively summed  the finer increments and took  the union of jump sizes. 

Lastly, we made use of the Fast Fourier Transform to compute  terms of the form
$$
\sum_{k=0}^\infty\phi (x+hz_k)\Delta\hat{p}^{h,k}_{n},
$$
which would be quite computationally expensive otherwise.  In our error analysis, we ran 3000 simulations of the explicit and implicit-explicit schemes on 30 CPUs and computed the following errors:

\begin{gather}
\sqrt{ \frac{1}{3000}\sum_{m=1}^{3000}\max_{0\le n\le \cT}\sup_{x\in \bG_h}|u_{t_n}(x)-\hat{u}^{h,\tau}_n(x)|^2}, \quad 
\sqrt{ \frac{1}{3000}\sum_{m=1}^{3000}\max_{0\le n\le \cT}\|u_{t_n}-\hat{u}^{h,\tau}_n\|_{\ell_2(\bG_h)}^2}\\
\sqrt{ \frac{1}{3000}\sum_{m=1}^{3000}\max_{0\le n\le \cT}\sup_{x\in \bG_h}|u_{t_n}(x)-\hat{v}^{h,\tau}_n(x)|^2}, \quad 
\sqrt{ \frac{1}{3000}\sum_{m=1}^{3000}\max_{0\le n\le \cT}\|u_{t_n}-\hat{v}^{h,\tau}_n\|_{\ell_2(\bG_h)}^2}.
\end{gather}

By our main theorems and the relation $\tau=h^2$, these errors should proportional to $h$ (i.e.  $O(h)$). This is precisely what we observe in Figure \ref{fig:rocplot}. The slight bump down at the finest two spatial step-sizes $h\in\{2^{-6}, 2^{-7}\}$ is most likely due to the increase in the number of terms  in the approximation of $I^h_{\delta}$ (three to be precise)  and the analogous small jump term in the noise. 
 
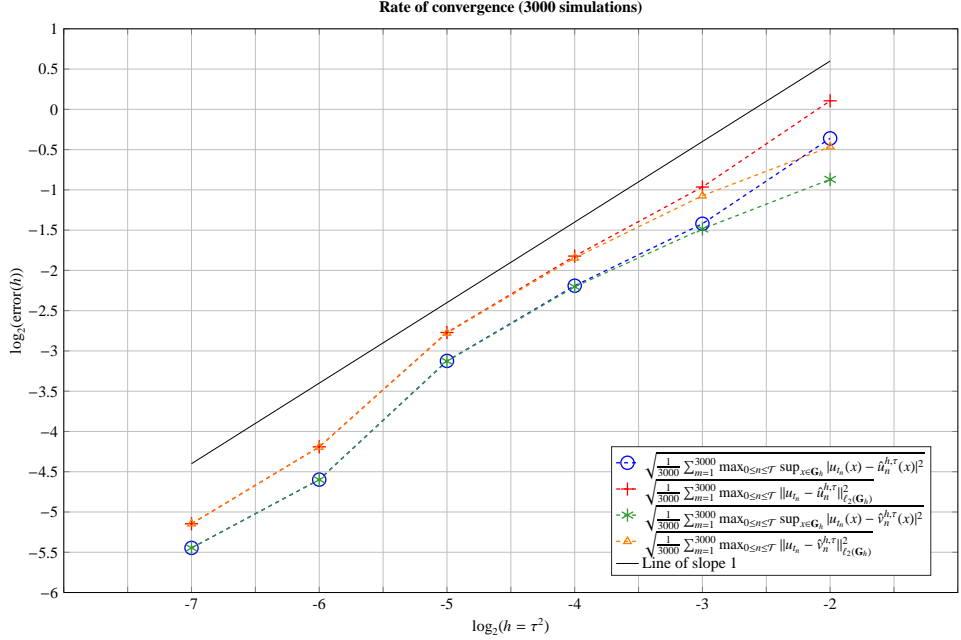
\begin{figure} [h!]
\centering 
\newlength\figureheight
 \newlength\figurewidth 
 \setlength\figureheight{15cm} 
 \setlength\figurewidth{25cm} 
 % This file was created by matlab2tikz.
% Minimal pgfplots version: 1.3
%
%The latest updates can be retrieved from
%  http://www.mathworks.com/matlabcentral/fileexchange/22022-matlab2tikz
%where you can also make suggestions and rate matlab2tikz.
%
\begin{tikzpicture}[scale=0.5]

\begin{axis}[%
width=0.95092\figurewidth,
height=\figureheight,
at={(0\figurewidth,0\figureheight)},
scale only axis,
xmin=-8,
xmax=-1,
ylabel={$\log_2(\textnormal{error}(h))$},
xmajorgrids,
ymin=-6,
ymax=1,
xticklabels={,,-7, ,-6, ,-5, ,-4, ,-3, ,-2},
xlabel={$\log_2(h=\tau^2)$},
ymajorgrids,
title style={font=\bfseries},
title={Rate of convergence (3000 simulations)},
legend style={at={(0.97,0.03)},anchor=south east,legend cell align=left,align=left,draw=white!15!black}
]
\addplot [color=blue,dashed,line width=1.0pt,mark size=5.0pt,mark=o,mark options={solid}]
  table[row sep=crcr]{%
-7	-5.44591912535853\\
-6	-4.59785744071189\\
-5	-3.12404304790864\\
-4	-2.19013708014465\\
-3	-1.42047726458118\\
-2	-0.360183414131707\\
};
\addlegendentry{\small $\sqrt{ \frac{1}{3000}\sum_{m=1}^{3000}\max_{0\le n\le \cT}\sup_{x\in \bG_h}|u_{t_n}(x)-\hat{u}^{h,\tau}_n(x)|^2}$};

\addplot [color=red,dashed,line width=1.0pt,mark size=5.0pt,mark=+,mark options={solid}]
  table[row sep=crcr]{%
-7	-5.14602105287161\\
-6	-4.19086417741885\\
-5	-2.77086680947481\\
-4	-1.82234988002309\\
-3	-0.96437052518069\\
-2	0.106078905246586\\
};
\addlegendentry{\small $\sqrt{ \frac{1}{3000}\sum_{m=1}^{3000}\max_{0\le n\le \cT}\|u_{t_n}-\hat{u}^{h,\tau}_n\|_{\ell_2(\bG_h)}^2}$};

\addplot [color=green!60!violet,dashed,line width=1.0pt,mark size=5.0pt,mark=asterisk,mark options={solid}]
  table[row sep=crcr]{%
-7	-5.44591054209182\\
-6	-4.59914635732843\\
-5	-3.12786291938388\\
-4	-2.20339871515189\\
-3	-1.48526194466697\\
-2	-0.869933721945125\\
};
\addlegendentry{\small $\sqrt{ \frac{1}{3000}\sum_{m=1}^{3000}\max_{0\le n\le \cT}\sup_{x\in \bG_h}|u_{t_n}(x)-\hat{v}^{h,\tau}_n(x)|^2}$};

\addplot [color=orange,dashed,line width=1.0pt,mark size=3.3pt,mark=triangle,mark options={solid}]
  table[row sep=crcr]{%
-7	-5.1458952714892\\
-6	-4.19285980484163\\
-5	-2.77704501302209\\
-4	-1.84483962958117\\
-3	-1.07308474374091\\
-2	-0.464669003869324\\
};
\addlegendentry{\small $\sqrt{ \frac{1}{3000}\sum_{m=1}^{3000}\max_{0\le n\le \cT}\|u_{t_n}-\hat{v}^{h,\tau}_n\|_{\ell_2(\bG_h)}^2}$};

\addplot [color=black,solid]
  table[row sep=crcr]{%
-7	-4.4\\
-6	-3.4\\
-5	-2.4\\
-4	-1.4\\
-3	-0.4\\
-2	0.6\\
};
\addlegendentry{Line of slope 1};

\end{axis}
\end{tikzpicture}%
  \caption{Simulated errors with respect to the space discretization and a line as reference slope on a $\log_2$ scale.} \label{fig:rocplot}
   \end{figure}

\section{Auxiliary results}
In this section, we present some results that will be needed for the proof of Theorems \ref{thm main theorem explicit} and \ref{thm main theorem implicit}. Introduce the operators 
\begin{align}
\cI^{\delta;h}(z) \phi(x) &:= \sum_{k=0}^\infty \mathbf{1}_{B^h_k}(z) \sum_{l=1}^{\chi(h,k)} \sum_{i=1}^d \tilde{\theta}^{h,k}_lz^i\delta_{h,i}\phi(x+hz_{r^{h,k}_l}), \\
\cI^{\delta^c;h}(z)\phi(x) &:= \sum_{k=0}^\infty \mathbf{1}_{\bar{B}^h_k}(z)(\phi(x+hz_k)-\phi(x)), \\
\cI^h(z)\phi(x) &:=\cI^{\delta;h}(z) \phi(x) +\cI^{\delta^c;h}(z) \phi(x).
\end{align}
Consider the following explicit and implicit-explicit schemes in $H^0$:
\begin{align}             \label{explicit:discretised equation on R^d}
u^{h,\tau}_{n}=&u^{h,\tau}_{n-1}+\left((\cL^h_{t_{n-1}}+ I^h)u^{h,\tau}_{n-1}+f_{t_{n-1}}\right)\tau+\sum_{\varrho=1}^\infty(\cN^{\varrho;h}_{t_{n-1}}u_{n-1}^{h,\tau}+g^\varrho_{t_{n-1}})\Delta w^{\varrho}_{n}\nonumber\\
&\quad+\int_{\bR^d}\left(\cI^h(z)u^{h,\tau}_{n-1}+ o_{t_{n-1}}(z)\right)q(dz,]t_{n-1},t_n]), \;\; n\in \{1,\ldots,\cT\},
\end{align}
and 
\begin{align}    \label{implicit:discretised equation on R^d}
v^{h,\tau}_{n}=&v^{h,\tau}_{n-1}+\left((\tilde{\cL}^h_{t_n}+ I^h_{\delta})v^{h,\tau}_{n}+\tilde{I}_{\delta^c}^hv^{h,\tau}_{n-1}+f_{t_n}\right)\tau+\mathbf{1}_{n>1}\sum_{\varrho=1}^\infty(\cN^{\varrho;h}_{t_{n-1}}v_{n-1}^{h,\tau}+g^\varrho_{t_{n-1}})\Delta w^{\varrho}_{n}\nonumber\\
&\quad +\mathbf{1}_{n>1}\int_{\bR^d}\left(\cI^h(z)v^{h,\tau}_{n-1}+o_{t_{n-1}}(z)\right)q(dz,]t_{n-1},t_n]), \;\; n\in \{1,\ldots,\cT\}, 
\end{align}
with initial condition
$$
u^{h,\tau}_0(x)=v^{h,\tau}_0(x)= \varphi(x), \;\;x\in\bR^d.
$$
We now prove some lemmas that will help us to establish the consistency of our approximations. 
The following lemma is well-known and we omit the proof (see, e.g., \cite{GyKr10c}). 
\begin{lemma}                            \label{lem estimate derivatives}
For each  integer $m \geq 0$, there is a constant $N=N(d,m)$ such that for all $u \in H^{m+2}$ and $v \in H^{m+3}$,
\begin{equation}                                \nonumber
\|\delta_{h,i}u-\partial_i u\|_m \leq \frac{1}{2}|h| \|u\|_{m+2},
\end{equation}
 \begin{equation}                                \nonumber
\|\delta_{h,i}\delta_{-h,j} v-\partial_{ij} v\|_m \leq N|h| \|v\|_{m+3}.
\end{equation}
\end{lemma}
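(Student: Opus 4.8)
The plan is to reduce everything to smooth functions by density and then to exhibit each difference quotient, applied to a smooth function, as an average of the corresponding derivative of $u$ (resp.\ $v$) over short segments; the Sobolev estimates will then follow from the translation invariance of $\|\cdot\|_m$ together with Minkowski's integral inequality. First I would fix $u\in C_c^\infty(\bR^d)$ and note that by the fundamental theorem of calculus
\begin{equation}
\delta_{h,i}u(x)=\frac{u(x+he_i)-u(x)}{h}=\int_0^1\partial_i u(x+\theta h e_i)\,d\theta,
\end{equation}
so that, subtracting $\partial_i u(x)=\int_0^1\partial_i u(x)\,d\theta$ and applying the fundamental theorem of calculus once more in the $i$-direction, one obtains the representation
\begin{equation}
\delta_{h,i}u(x)-\partial_i u(x)=h\int_0^1\!\!\int_0^\theta \partial_{ii}u(x+she_i)\,ds\,d\theta.
\end{equation}

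Since $\partial^\gamma$ commutes with $\delta_{h,i}$, with $\partial_i$, and with the integral, I would apply $\partial^\gamma$ for each $|\gamma|\le m$, take $L_2$-norms, and use Minkowski's integral inequality together with the translation invariance $\|\partial^\gamma\partial_{ii}u(\cdot+she_i)\|_0=\|\partial^{\gamma+2e_i}u\|_0$ to get $\|\partial^\gamma(\delta_{h,i}u-\partial_i u)\|_0\le |h|\big(\int_0^1\!\int_0^\theta ds\,d\theta\big)\|\partial^{\gamma+2e_i}u\|_0=\tfrac{|h|}{2}\|\partial^{\gamma+2e_i}u\|_0$. Squaring, summing over $|\gamma|\le m$, and observing that the multi-indices $\gamma+2e_i$ are distinct and of length at most $m+2$, yields $\|\delta_{h,i}u-\partial_i u\|_m^2\le \tfrac{h^2}{4}\|u\|_{m+2}^2$, which is precisely the first estimate with the sharp constant $\tfrac12$.

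For the second estimate I would iterate the same representation. Writing $\delta_{-h,j}v(x)=\int_0^1\partial_j v(x-the_j)\,dt$ and then applying $\delta_{h,i}$ gives $\delta_{h,i}\delta_{-h,j}v(x)=\int_0^1\!\int_0^1\partial_{ij}v(x+she_i-the_j)\,ds\,dt$; subtracting $\partial_{ij}v(x)$ and expressing the increment of $\partial_{ij}v$ along the segment from $x$ to $x+she_i-the_j$ as an integral of third derivatives produces a single factor of $h$ multiplying terms of the form $\partial_{iij}v$ and $\partial_{ijj}v$. The same Minkowski/translation-invariance argument, now combined with $\sum_{|\gamma|\le m}\|\partial^{\gamma+\beta}v\|_0^2\le \|v\|_{m+3}^2$ for $|\beta|=3$, gives $\|\delta_{h,i}\delta_{-h,j}v-\partial_{ij}v\|_m\le N|h|\,\|v\|_{m+3}$ with an explicit $N=N(d,m)$ (in fact $N$ may be taken to be an absolute constant).

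Finally I would remove the smoothness hypothesis by density: both sides of each inequality are continuous in the $H^{m+2}$ (resp.\ $H^{m+3}$) norm — the difference operators are bounded on every $H^k$ and differentiation maps Sobolev spaces continuously — so the estimates extend from $C_c^\infty(\bR^d)$, which is dense, to all of $H^{m+2}$ and $H^{m+3}$. I expect no genuine obstacle here, which is consistent with the lemma being stated as well-known; the only points deserving care are the justification of moving the $L_2$-norm inside the $\theta$-integral and the preservation of the sharp constant $\tfrac12$ in the first estimate. An equivalent route is purely Fourier-analytic, based on Plancherel and the elementary bound $|e^{i\theta}-1-i\theta|\le \theta^2/2$ applied to the symbol $\tfrac{e^{ih\xi_i}-1}{h}-i\xi_i$, which reproduces the same constants.
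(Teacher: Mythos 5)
Your proof is correct. The paper itself omits the argument, declaring the lemma well-known and deferring to \cite{GyKr10c}; the integral representations $\delta_{h,i}u(x)=\int_0^1\partial_i u(x+\theta he_i)\,d\theta$ and its iterate, combined with Minkowski's integral inequality and translation invariance of the $L_2$-norm, are exactly the standard route taken there, and your bookkeeping (injectivity of $\gamma\mapsto\gamma+2e_i$, the value $\int_0^1\int_0^\theta ds\,d\theta=\tfrac12$ giving the sharp constant, and the density/boundedness argument to pass from $C_c^\infty(\bR^d)$ to $H^{m+2}$ and $H^{m+3}$) is sound. The Fourier-multiplier alternative you sketch is an equally valid one-line verification of the same constants.
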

\begin{lemma}   \label{lem:intopest}
For each integer $m\ge 0$, there is a constant \newline $N=N(d,m,\delta)$  such that for all $u\in H^{m+3}$, we have
\begin{equation}                    \label{rate of convergence I^h}
\|Iu-I^hu\|_{m} \leq N |h| \|u\|_{m+3}.
\end{equation}
\end{lemma}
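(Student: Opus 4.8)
The statement to prove is the consistency estimate $\|Iu - I^h u\|_m \le N|h|\|u\|_{m+3}$ for the full integral operator. Since $I = I_\delta + I_{\delta^c}$ and correspondingly $I^h = I^h_\delta + I^h_{\delta^c}$, the natural strategy is to estimate the two pieces separately and add. So I would prove two sublemmas: $\|I_\delta u - I^h_\delta u\|_m \le N|h|\|u\|_{m+3}$ and $\|I_{\delta^c} u - I^h_{\delta^c} u\|_m \le N|h|\|u\|_{m+2}$, then combine (the weaker $\|u\|_{m+2}$ bound on the second piece is absorbed into $\|u\|_{m+3}$).

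Let me think carefully about each piece.

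For the $I_{\delta^c}$ piece, the measure $\pi_1$ restricted to $\{|z|>\delta\}$ is finite by (1.4), so there's no singularity to worry about. The operator $I_{\delta^c}$ acts by $\phi(\cdot+z) - \phi(\cdot) - \mathbf{1}_{[-1,1]}(|z|)\sum_j z_j\partial_j\phi$ integrated against $\pi_1$ on $\{|z|>\delta\}$, while $I^h_{\delta^c}$ replaces the shift $z$ by the nearest grid point $hz_k$ — i.e. it uses $\bar\zeta_{h,k} = \pi_1(\bar B^h_k)$ for the shift/identity terms and $\bar\xi^i_{h,k} = \int_{\bar B^h_k \cap [-1,1]} z^i\,\pi_1(dz)$ with the symmetric difference $\delta^h_i$ for the drift-correction term. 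The two differences to control are: (a) replacing $\phi(x+z)$ by $\phi(x+hz_k)$ where $|z - hz_k| \le \frac{\sqrt d}{2}|h|$ since $z \in \bar B^h_k \subset A^h_k$, which costs $|h|\|\phi\|_{m+1}$ per box by the mean value inequality in $H^m$; and (b) replacing $\sum_j z_j\partial_j\phi$ by $\sum_j \bar\xi^i_{h,k}\,\delta^h_i\phi$, which combines the first-order finite difference error $\|\delta^h_i\phi - \partial_i\phi\|_m \le \frac12|h|\|\phi\|_{m+2}$ (Lemma 3.1) with the fact that $\bar\xi^i_{h,k}$ already integrates $z^i$ over the box. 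Summing over $k$ and using the finiteness $\sum_{r}\pi_r(\{|z|>\delta\}) < \infty$ from (1.8) produces the bound; I must be a little careful that the constants from each box sum to something finite, but since the total mass and the integrals $\int_{|z|>\delta}|z|\,\pi_1(dz)$ (on $|z|\le1$) are finite, this works out to $N|h|\|u\|_{m+2}$.

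The $I_\delta$ piece is where I expect the real obstacle. Here $\pi_1$ near the origin may be infinite, so the estimate must exploit the $|z|^2$-weighting. By (2.8)/(2.9) we have $I_\delta u(x) = \sum_k \int_{B^h_k}\int_0^1 \sum_{ij} z^i z^j \partial_{ij}u(x+\theta z)(1-\theta)\,d\theta\,\pi_1(dz)$, while $I^h_\delta u(x)$ replaces $z$ by $hz_k$ in the coefficient (giving $\zeta^{ij}_{h,k} = \int_{B^h_k} z^i z^j\,\pi_1(dz)$), replaces $\partial_{ij}u(x+\theta z)$ by $\delta_{h,i}\delta_{-h,j}u(x+hz_{r^{h,k}_l})$ evaluated at a grid-discretized shift indexed by the line-segment partition, and replaces $\int_0^1(1-\theta)\,d\theta$ on each sub-interval by $\bar\theta^{h,k}_l$. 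I would split the error into three telescoping pieces: (i) $\partial_{ij}u(x+\theta z) \to \partial_{ij}u(x+\theta hz_k)$, controlled by $|z - hz_k| \le \frac{\sqrt d}{2}|h|$; (ii) $\partial_{ij}u(x+\theta h z_k) \to \partial_{ij}u(x + hz_{r^{h,k}_l})$ for $\theta$ in the $l$-th subinterval, controlled by the key geometric estimate (2.7), $|\theta hz_k - hz_{r^{h,k}_l}| \le \sqrt d|h|$; and (iii) $\partial_{ij}u \to \delta_{h,i}\delta_{-h,j}u$, controlled by Lemma 3.1 at cost $|h|\|u\|_{m+3}$. The crucial point is that each of these three differences, measured in $\|\cdot\|_m$, is bounded by $|h|$ times $\|u\|_{m+3}$ (the finite-difference step loses two more derivatives beyond $\partial_{ij}$, hence $m+3$), uniformly in $z \in B^h_k$ and $\theta$, and the leftover factor $\sum_{ij}|z^i z^j| \le |z|^2$ is then integrated against $\pi_1$ over $\{|z|\le\delta\}$ giving the finite constant $\varsigma_1(\delta) \le \varsigma(\delta) < \varkappa$. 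The main obstacle is organizing the line-segment partition bookkeeping — matching $\bar\theta^{h,k}_l$ against $\int_{\theta^{h,k}_{l-1}}^{\theta^{h,k}_l}(1-\theta)\,d\theta$ so that the $\theta$-integral reassembles correctly across the partition — while keeping every constant independent of $k$ and $h$; the geometric bound (2.7) is exactly what makes step (ii) uniform and is the technical heart of the argument.

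Combining, I obtain $\|I_\delta u - I^h_\delta u\|_m \le N|h|\|u\|_{m+3}$ and $\|I_{\delta^c}u - I^h_{\delta^c}u\|_m \le N|h|\|u\|_{m+2} \le N|h|\|u\|_{m+3}$, and the triangle inequality yields (3.2) with $N = N(d,m,\delta)$.
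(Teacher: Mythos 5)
Your proposal is correct and follows essentially the same route as the paper: split $I-I^h$ into the $I_{\delta^c}$ and $I_\delta$ parts, handle the far-field part using the finiteness of $\pi_1(\{|z|>\delta\})$, the nearest-grid-point bound $|z-hz_k|\le \sqrt{d}|h|/2$, and the first-difference estimate of Lemma \ref{lem estimate derivatives}, and handle the near-field part by telescoping through the grid point $hz_{r^{h,k}_l}$ via the geometric bound \eqref{Remark In the same box} and the second-difference estimate, with the leftover $|z|^2$ integrated against $\pi_1$ on $\{|z|\le\delta\}$. The only cosmetic difference is that you telescope in three steps through the intermediate point $\theta h z_k$, whereas the paper goes directly from $\partial_{ij}u(x+\theta z)$ to $\partial_{ij}u(x+hz_{r^{h,k}_l})$ — but \eqref{Remark In the same box} is itself proved by passing through $\theta hz_k$, so the arguments coincide.
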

\begin{proof}
It suffices to show \eqref{rate of convergence I^h} for $u \in C^\infty_c(\bR^d)$. We begin with $m=0$.  A simple calculation shows that
\begin{align*}
I_{\delta^c} u(x)-I^h_{\delta^c}u(x) 
&=\sum_{k=0}^ \infty \int_{\bar{B}^h_k} \int_0^1 \sum_{i=1}^d (z^i-hz^i_k)\partial_iu(x+hz_k+\theta(z-hz_k))d\theta\pi_1(dz)\\
&\quad -\sum_{k=0}^ \infty \int_{\bar{B}^h_k\cap\{|z| \leq 1\}} \sum_{i=1}^d z^i( \partial_iu(x)- \delta^{h}_i u(x))\pi_1(dz).
\end{align*}
By Minkowski's inequality, we get
\begin{gather*}
\|I_{\delta^c} u-I^h_{\delta^c}u\|_{0} \leq \sum_{k=0}^ \infty \int_{\bar{B}^h_k} \sum_{i=1}^d |z^i-hz^i_k| \| \partial_iu\|_{0} \pi_1(dz)\\
+\sum_{k=0}^ \infty \int_{\bar{B}^h_k\cap\{|z| \leq 1\}} \sum_{i=1}^d |z^i| \|\partial_iu(x)- \delta^{h}_iu(x)\|_{0} \pi_1(dz)
\leq N|h|\|u\|_{3}+ N \sum_{i=1}^d \|\partial_iu(x)- \delta^{h}_i u(x)\|_{0} ,
\end{gather*}
since  $|z-hz_k| \leq |h|\sqrt{d}/2$  and \eqref{delta complement of the levy measure} holds. Thus, by Lemma \ref{lem estimate derivatives},  we have
\begin{equation}          \label{estimate I delta c}
\|I_{\delta^c} u-I^h_{\delta^c}u\|_{0} \leq N|h|\|u\|_{3}.
\end{equation}
We also have
$$
I_\delta u(x)-I^h_\delta u(x)=
$$
\begin{equation}              \label{mess1}
\sum_{k=0}^\infty \int_{B^h_k} \sum_{l=1}^{\chi(h,k)}\int_{\theta^{h,k}_{l-1}}^{\theta^{h,k}_{l}}  \sum_{i,j=1}^d z^iz^j\left( \partial_{ij}u(x+\theta z)- \delta_{h,i} \delta_{-h,j} u(x+ h z_{r^{h,k}_l}) \right)(1-\theta)d\theta \pi_1(dz).
\end{equation}
Note that 
\begin{gather*}
 \partial_{ij}u(x+\theta z)
- \delta_{h,i} \delta_{-h,j} u(x+ h z_{r^{h,k}_l}) \allowbreak \\
= \partial_{ij}u(x+\theta z)- \partial_{ij}u(x+ h z_{r^{h,k}_l})+ \partial_{ij}u(x+ h z_{r^{h,k}_l})-\delta_{h,i}\delta_{-h,j} u(x+ h z_{r^{h,k}_l}) \allowbreak \\
=\int_0^1 \sum_{q=1}^d\left(\theta z^q-hz^q_{r^{h,k}_l}\right)\partial_q \partial_{ij}u\left(x+hz_{r^{h,k}_l}+ \rho(
\theta z-hz_{r^{h,k}_l})\right) d\rho \allowbreak\\ 
+\partial_{ij}u(x+ h z_{r^{h,k}_l})-\delta_{h,i}\delta_{-h,j} u(x+ h z_{r^{h,k}_l}) .
\end{gather*}
By   \eqref{Remark In the same box}, we have $|\theta z^q-hz^q_{r^{h,k}_l} | \leq N |h|$. Hence, substituting  the above relation in \eqref{mess1}, using Minkowski's inequality, \eqref{delta of the levy measure}, and Lemma \ref{lem estimate derivatives},
we obtain 
\begin{equation}          \label{estimate I delta}
\|I_{\delta} u-I^h_{\delta}u\|_{0} \leq |h|N\|u\|_{3}.
\end{equation}
Combining \eqref{estimate I delta c} and \eqref{estimate I delta}, we have \eqref{rate of convergence I^h} for $m=0$. The case  $m>0$ follows from the case  $m=0$, since  
for a multi-index $\gamma$,  we have
$$
\partial^\gamma (I u-I^hu)=I\partial^\gamma u-I^h\partial^\gamma u.
$$
\end{proof}
\begin{lemma}           \label{lem:NoiseOpEst}
For each integer $m\ge 0$, there is a constant  $N=N(d,m,\delta)$, such that for all $u\in H^{m+2}$, we have
\begin{equation}\label{fullJconsist}
\int_{\bR^d}\|\cI^h(z)u-\cI(z)u\|^2_{m} \pi_2(dz) \leq N |h|^2 \|u\|^2_{m+2}.
\end{equation}
\end{lemma}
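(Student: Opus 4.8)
The plan is to split the integral over $z$ according to the decomposition $\cI^h=\cI^{\delta;h}+\cI^{\delta^c;h}$. Since for each fixed $z$ the indicators $\mathbf 1_{B^h_k}(z)$ and $\mathbf 1_{\bar B^h_k}(z)$ single out the unique rectangle $A^h_k\ni z$, and since $B^h_k\subseteq\{|z|\le\delta\}$ while $\bar B^h_k\subseteq\{|z|>\delta\}$, only $\cI^{\delta;h}(z)$ contributes when $|z|\le\delta$ and only $\cI^{\delta^c;h}(z)$ contributes when $|z|>\delta$. Hence it suffices to estimate $\|\cI^{\delta^c;h}(z)u-\cI(z)u\|_m$ pointwise on $\{|z|>\delta\}$ and $\|\cI^{\delta;h}(z)u-\cI(z)u\|_m$ pointwise on $\{|z|\le\delta\}$, and then integrate against $\pi_2$. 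All the bounds below are stated directly in the $H^m$-norm; this is legitimate because each operator is built from translations, the difference quotients $\delta_{h,i}$, and multiplication by the constants $z^i$, all of which commute with $\partial^\gamma$ and leave the $H^m$-norm translation invariant, so Lemma \ref{lem estimate derivatives} applies verbatim at level $m$ (alternatively, one reduces to the case $m=0$ exactly as in Lemma \ref{lem:intopest}).

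On $\{|z|>\delta\}$ the argument is short. With $k$ the index for which $z\in\bar B^h_k$, the error is $\cI^{\delta^c;h}(z)u-\cI(z)u=u(\cdot+hz_k)-u(\cdot+z)$. Writing this as an integral of $\nabla u$ along the segment joining $x+z$ and $x+hz_k$, whose length $|z-hz_k|$ is at most $\sqrt d\,|h|/2$ since $z\in A^h_k$, and using translation invariance of $\|\cdot\|_m$, I obtain $\|\cI^{\delta^c;h}(z)u-\cI(z)u\|_m\le N|h|\,\|u\|_{m+1}$ uniformly in $z$. Integrating over $\{|z|>\delta\}$ and invoking \eqref{delta complement of the levy measure}, which guarantees $\pi_2(\{|z|>\delta\})<\infty$, produces the bound $N|h|^2\|u\|_{m+1}^2$ for this piece.

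The part over $\{|z|\le\delta\}$ is where the real work lies, and it is the step I expect to be the main obstacle, because here one must extract a factor $|z|$ from the error in order to exploit the near-origin integrability $\int_{|z|\le\delta}|z|^2\,\pi_2(dz)=\varsigma_2(\delta)<\infty$ furnished by \eqref{eq moments of the measure }. The starting point is the exact identity $\cI(z)u(x)=u(x+z)-u(x)=\int_0^1\sum_{i=1}^d z^i\partial_i u(x+\theta z)\,d\theta$. Fixing $k$ with $z\in B^h_k$ and using the partition $0=\theta^{h,k}_0\le\cdots\le\theta^{h,k}_{\chi(h,k)}=1$ associated with the rectangles the segment $\{\theta h z_k\}$ traverses, I break the $\theta$-integral over the subintervals, whose lengths are the nonnegative weights $\tilde\theta^{h,k}_l$ summing to $1$. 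The difference $\cI^{\delta;h}(z)u(x)-\cI(z)u(x)$ then becomes a sum over $l$ of integrals over the $l$-th subinterval of $\sum_i z^i\big(\delta_{h,i}u(x+hz_{r^{h,k}_l})-\partial_i u(x+\theta z)\big)$. I split the bracket as $\big(\delta_{h,i}u-\partial_i u\big)(x+hz_{r^{h,k}_l})$ plus $\partial_i u(x+hz_{r^{h,k}_l})-\partial_i u(x+\theta z)$: the first is controlled by Lemma \ref{lem estimate derivatives} by $\tfrac12|h|\|u\|_{m+2}$, and the second, by a mean-value argument, by $N|h|\|u\|_{m+2}$, since $|\theta z-hz_{r^{h,k}_l}|\le\sqrt d\,|h|$ by \eqref{Remark In the same box}. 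Applying the integral form of Minkowski's inequality and using $\sum_i|z^i|\le\sqrt d\,|z|$ together with $\sum_l\tilde\theta^{h,k}_l=1$ yields the pointwise estimate $\|\cI^{\delta;h}(z)u-\cI(z)u\|_m\le N|h|\,|z|\,\|u\|_{m+2}$. Squaring and integrating against $\pi_2$ over $\{|z|\le\delta\}$ gives $N|h|^2\varsigma_2(\delta)\|u\|_{m+2}^2$.

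Adding the two contributions and absorbing $\pi_2(\{|z|>\delta\})$ and $\varsigma_2(\delta)$ into the constant $N=N(d,m,\delta)$ establishes \eqref{fullJconsist}. The only genuinely delicate point is the bookkeeping of the partition and base points in the small-jump term and, crucially, the need to retain the linear factor $z^i$ so that the singular but $|z|^2$-integrable mass of $\pi_2$ near the origin is tamed; everything else is a routine combination of Lemma \ref{lem estimate derivatives}, Minkowski's inequality, and translation invariance.
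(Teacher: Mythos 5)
Your proposal is correct and follows essentially the same route as the paper: split according to $|z|\le\delta$ versus $|z|>\delta$, bound the large-jump error by a gradient integral along the segment from $x+z$ to $x+hz_k$ of length at most $\sqrt d\,|h|/2$, and for the small jumps decompose $\delta_{h,i}u(x+hz_{r^{h,k}_l})-\partial_i u(x+\theta z)$ into a difference-quotient error (Lemma \ref{lem estimate derivatives}) plus a translation error controlled by \eqref{Remark In the same box}, retaining the factor $z^i$ so that $\varsigma_2(\delta)$ closes the integral. The reduction to $m=0$ by commuting $\partial^\gamma$ with the operators is also exactly the paper's argument.
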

\begin{proof}
It suffices to prove the lemma for $u \in C^\infty_c(\bR^d)$ and  $m=0$. We have
\begin{gather*}
\cI^{\delta}(z)u(x)-\cI^{\delta;h} (z)u(x)=\\
\sum_{k=0}^\infty \mathbf{1}_{B^h_k}(z) \sum_{l=1}^{\chi(h,k)} \int_{\theta^{h,k}_{l-1}}^{\theta^{h,k}_{l}}\sum_{i=1}^d z^i(\partial_i u(x+\theta z)-\delta_{h,i}u(x+hz_{r^{h,k}_l}))d \theta.
\end{gather*}
Notice that
\begin{gather*}
\partial_i u(x+\theta z)-\delta_{h,i}u(x+hz_{r^{h,k}_l})
=\int_0^1 \sum_{i,j=1}^d\partial_{ij} u(x+\rho (\theta z-hz_{r^{h,k}_l})) (\theta z^j-hz^j_{r^{h,k}_l})d\rho\\
+\partial_i u(x+hz_{r^{h,k}_l})-\delta_{h,i}u(x+hz_{r^{h,k}_l}).
\end{gather*}
Thus, by Remark  \ref{Remark In the same box} and Lemma \ref{lem estimate derivatives}, we get
$$
\|\cI^{\delta;h}(z)u-\cI^{\delta}(z)u\|^2_{0} \leq \mathbf{1}_{|z| \leq \delta}|z|^2 N|h|^2\|u\|_{2}^2,
$$
and hence by \eqref{delta of the levy measure},  we obtain
\begin{equation}                     \label{delta convergence noise}
\int_{\bR^d}\|\cI^{\delta;h}(z)u-\cI^{\delta}(z)u\|^2_{0} \pi_2(dz) \leq N |h|^2 \|u\|^2_{2}.
\end{equation}
We also have
\begin{gather}
|\cI^{\delta^c}(z)u(x)-\cI^{\delta^c;h}(z)u(x)|
= \sum_{k=0}^\infty \mathbf{1}_{\bar{B}^h_k}(z)|u(x+z)-u(x+hz_k)|\\
\leq \sum_{k=0}^\infty \mathbf{1}_{\bar{B}^h_k}(z) \int_0^1 \sum_{i=1}^d|\partial_i u(x+hz_k+\rho(z-hz_k))\|z^i-hz^i_k| d\rho.
\end{gather}
Consequently,
$$
\|\cI^{\delta^c;h}(z)u-\cI^{\delta^c}(z)u\|^2_{0} \leq \mathbf{1}_{|z| > \delta} N|h|^2\|u\|_1^2,
$$
which implies by \eqref{delta complement of the levy measure} that
\begin{equation}                     \label{eq:Jhconsist}
\int_{\bR^d}\|\cI^{\delta^c;h}(z)u-\cI^{\delta^c}(z)u\|^2_{0} \pi_2(dz) \leq N |h|^2 \|u\|^2_{1}.
\end{equation}
Combining \eqref{eq:Jhconsist}  and \eqref{delta convergence noise}, we have \eqref{fullJconsist} for $m=0$.  The case  $m>0$ follows from the case  $m=0$, since  
for a multi-index $\gamma$,  we have
$$
\partial^\gamma (\cI u-\cI^hu)=\cI\partial^\gamma u-\cI^h\partial^\gamma u.
$$
\end{proof}

\begin{lemma}           \label{lem:coercivity}
If Assumption \ref{asm:coeff} holds for some $ m\ge 0$, then for any $\epsilon\in (0,1)$ there exists constants $N_1=N_1(d,m,\varkappa,K,\delta,\epsilon) $ and  $N_2=N_2(d,m,\varkappa,K,\delta,\epsilon) $ such that for any $u \in H^m$, 
$$
\bQ^{(m)}_t(u):= 2(u,\cL^h_tu)_m+\|\cN^h_tu\|_{m,\ell_2}^2+2(u,I^hu)_m+\int_{\bR^d} \|\cI^h(z)u\|^2_m \pi_2(dz)  
$$
\begin{equation}           \label{coercivity}   
\leq -(\varkappa-\varsigma(\delta)-\epsilon)\sum_{i=1}^d\|\delta_{h,i}u\|^2_m+ N_1\|u\|^2_m,
\end{equation}
and
\begin{equation}                \label{implicit coercivity}
(u,\tilde{\cL}^h_tu)_m+(u,I^h_{\delta}u)_m\le -(\varkappa-\varsigma_1(\delta)-\epsilon)\sum_{i=1}^d\|\delta_{h,i}u\|^2_m+ N_2\|u\|_n^m.
\end{equation}
\end{lemma}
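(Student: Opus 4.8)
The plan is to establish both inequalities first for $m=0$ and then bootstrap to general $m$. The bootstrap is routine: the operators $I^h=I^h_\delta+I^h_{\delta^c}$ and $\cI^h(z)$ are built only from grid shifts and from weights ($\zeta^{ij}_{h,k},\bar\theta^{h,k}_l,\tilde\theta^{h,k}_l,\bar\zeta_{h,k},\bar\xi^i_{h,k}$) that do not depend on $x$, so they commute with every $\partial^\gamma$; hence the corresponding terms in $\bQ^{(m)}_t(u)$ and in \eqref{implicit coercivity} reduce at once to the $m=0$ estimate applied to each $\partial^\gamma u$ and summed over $|\gamma|\le m$. For $\cL^h_t$ and $\cN^h_t$ the coefficients $a^{ij},\sigma^{i\varrho}$ do depend on $x$, so $\partial^\gamma(\cL^h_t u)=\cL^h_t\partial^\gamma u+[\partial^\gamma,\cL^h_t]u$, and the commutator is a finite sum of terms $(\partial^\beta a^{ij})\,\delta_{h,i}\delta_{-h,j}\partial^{\gamma-\beta}u$ with $\beta\ne 0$. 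Using $|\partial^\beta a^{ij}|\le K$ (Assumption \ref{asm:coeff}(i)), one summation by parts via \eqref{eq:conjugatedelh}, the elementary bound $\|\delta_{h,i}\psi\|_0\le\|\partial_i\psi\|_0$, and Young's inequality, these commutators are absorbed into $\epsilon\sum_i\|\delta_{h,i}u\|_m^2+N\|u\|_m^2$; the same applies to $\cN^h_t$. Thus it suffices to treat $m=0$.

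For $m=0$ I group the four terms of $\bQ^{(0)}_t(u)$. In $2(u,\cL^h_t u)_0$ and $\|\cN^h_t u\|_{0,\ell_2}^2$ I isolate the second-order block $i,j\in\{1,\dots,d\}$. A single summation by parts (\eqref{eq:conjugatedelh}) together with the discrete product rule gives $2(u,a^{ij}\delta_{h,i}\delta_{-h,j}u)_0=-2(a^{ij}\delta_{h,j}u,\delta_{h,i}u)_0+R_{ij}$, while the noise block contributes $\sum_{\varrho}(\sigma^{i\varrho}\sigma^{j\varrho}\delta_{h,i}u,\delta_{h,j}u)_0$ with no transfer required. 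Summing, the leading part is $-\sum_{i,j=1}^d\big((2a^{ij}-\sum_\varrho\sigma^{i\varrho}\sigma^{j\varrho})\delta_{h,j}u,\delta_{h,i}u\big)_0$, which by Assumption \ref{asm:coeff}(ii) applied pointwise with $\eta_i=\delta_{h,i}u(x)$ is at most $-\varkappa\sum_i\|\delta_{h,i}u\|_0^2$. Every remainder $R_{ij}$ and every term with $i=0$ or $j=0$ in either block (where $\delta_{h,0}=\mathrm{Id}$) is of lower order; the one point to watch is that each product-rule remainder carries a spare factor of the form $h\,\delta_{h,j}u=u(\cdot+he_j)-u$, whose $H^0$-norm is at most $2\|u\|_0$. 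Writing the remainders as (bounded coefficient)$\times\|u\|_0\times\|\delta_{h,i}u\|_0$ and applying Young's inequality absorbs them into $\epsilon\sum_i\|\delta_{h,i}u\|_0^2+N\|u\|_0^2$. This spare-factor device is what keeps the estimate uniform in $h$, so that no smallness of $|h|$ is needed.

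The main obstacle is the non-local drift term $2(u,I^h_\delta u)_0$, because of its multi-shift structure (the sums over $k$, over $l\le\chi(h,k)$, and the shifts by $hz_{r^{h,k}_l}$). After one summation by parts it takes the form $-2\sum_{k}\sum_{l}\bar\theta^{h,k}_l\sum_{i,j}\zeta^{ij}_{h,k}(S_{k,l}\delta_{h,j}u,\delta_{h,i}u)_0$, where $S_{k,l}$ is the grid shift by $hz_{r^{h,k}_l}$. The key is that $\zeta^{ij}_{h,k}=\int_{B^h_k}z^iz^j\pi_1(dz)$ is a symmetric positive semidefinite matrix and that $S_{k,l}$ is unitary on $H^0$: applying the Cauchy--Schwarz inequality to the semi-inner product $\langle P,Q\rangle_k:=\sum_{i,j}\zeta^{ij}_{h,k}(P_i,Q_j)_0$ on $(H^0)^d$, and using that a shift preserves the induced quadratic form $\langle V,V\rangle_k=\int_{B^h_k}\|\sum_i z^i\delta_{h,i}u\|_0^2\,\pi_1(dz)$, gives $\big|\sum_{i,j}\zeta^{ij}_{h,k}(S_{k,l}\delta_{h,j}u,\delta_{h,i}u)_0\big|\le\langle V,V\rangle_k$ with no $h$-dependent remainder. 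Since $\sum_l\bar\theta^{h,k}_l=\int_0^1(1-\theta)\,d\theta=\tfrac12$ and $(\sum_i z^i\delta_{h,i}u)^2\le|z|^2\sum_i(\delta_{h,i}u)^2$, summing over $k$ yields the sharp bound $2(u,I^h_\delta u)_0\le\varsigma_1(\delta)\sum_i\|\delta_{h,i}u\|_0^2$. The term $2(u,I^h_{\delta^c}u)_0$ is harmless: its $\delta^h_i$ part vanishes by \eqref{eq:symdifbillinzero}, while the shift part is $2\sum_k\bar\zeta_{h,k}\big((u,u(\cdot+hz_k))_0-\|u\|_0^2\big)\le 0$. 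For the noise integral I use that $\cI^{\delta;h}(z)$ and $\cI^{\delta^c;h}(z)$ have disjoint supports in $z$: the large-jump part is bounded by $4\pi_2(\{|z|>\delta\})\|u\|_0^2$, finite by \eqref{delta complement of the levy measure}, while for the small-jump part Minkowski's inequality, unitarity of the shifts, and $\sum_l\tilde\theta^{h,k}_l\le 1$ give $\|\cI^{\delta;h}(z)u\|_0\le\|\sum_i z^i\delta_{h,i}u\|_0$, so its $\pi_2$-integral over $\{|z|\le\delta\}$ is at most $\varsigma_2(\delta)\sum_i\|\delta_{h,i}u\|_0^2$. Adding the four groups gives $-(\varkappa-\varsigma_1(\delta)-\varsigma_2(\delta)-\epsilon)\sum_i\|\delta_{h,i}u\|_0^2+N\|u\|_0^2$, which is \eqref{coercivity} since $\varsigma=\varsigma_1+\varsigma_2$.

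The implicit estimate \eqref{implicit coercivity} is proved along exactly the same lines and is in fact simpler, since it has no martingale-noise block and the quadratic form carries the coefficient $1$ rather than $2$. The second-order block of $(u,\tilde\cL^h_t u)_0$ reduces, as above, to $-\sum_{i,j=1}^d(a^{ij}\delta_{h,j}u,\delta_{h,i}u)_0$ up to lower-order remainders; here the relevant ellipticity is the lower bound $\sum_{i,j}a^{ij}\eta_i\eta_j\ge\tfrac{\varkappa}{2}|\eta|^2$, which follows from Assumption \ref{asm:coeff}(ii) and the positivity $\sum_\varrho\sigma^{i\varrho}\sigma^{j\varrho}\eta_i\eta_j\ge0$. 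The two extra terms of $\tilde\cL^h_t$ cause no trouble: the zeroth-order term contributes $-\pi_1(\{|z|>\delta\})\|u\|_0^2\le 0$, and the $\delta^h_i$ term vanishes by \eqref{eq:symdifbillinzero}. Combining with $2(u,I^h_\delta u)_0\le\varsigma_1(\delta)\sum_i\|\delta_{h,i}u\|_0^2$ (now entering with coefficient $\tfrac12$) produces the required strictly negative leading term, with coefficient proportional to $\varkappa-\varsigma_1(\delta)$ through the bound $a\ge\varkappa/2$, and the reduction to general $m$ is identical to the one above. I expect the positive-semidefinite Cauchy--Schwarz step for $I^h_\delta$ to be the decisive point: it is precisely what converts the awkward telescoping over the rectangles $A^h_{r^{h,k}_l}$ into the sharp constant $\varsigma_1(\delta)$ without losing any power of $h$, whereas every other contribution is either pointwise ellipticity or a routine Young-type absorption.
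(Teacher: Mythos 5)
Your proof is correct and, on the substantive part of the lemma (the non-local operators), it is essentially the paper's argument: one discrete summation by parts on $I^h_\delta$ followed by Cauchy--Schwarz exploiting the positive semidefiniteness of $\zeta^{ij}_{h,k}$ and $\sum_l\bar\theta^{h,k}_l=\tfrac12$ to get the sharp constant $\varsigma_1(\delta)$, the sign observation for $I^h_{\delta^c}$ via \eqref{eq:symdifbillinzero}, and Minkowski's inequality with $\sum_l\tilde\theta^{h,k}_l=1$ for the $\pi_2$-integral of $\|\cI^h(z)u\|_0^2$. The only real difference is that for the local block $2(u,\cL^h_tu)_m+\|\cN^h_tu\|^2_{m,\ell_2}$ the paper simply cites Lemma 3.1 and Theorem 3.2 of \cite{GyKr10c}, whereas you reprove that estimate (correctly) via the discrete product rule, the spare-factor-of-$h$ absorption, and the commutator reduction to $m=0$; this buys self-containedness at the cost of length. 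One cosmetic point you handle more carefully than the paper: deriving \eqref{implicit coercivity} from the unhalved forms naturally yields the leading coefficient $-\tfrac12(\varkappa-\varsigma_1(\delta)-\epsilon)$ rather than $-(\varkappa-\varsigma_1(\delta)-\epsilon)$; this factor is immaterial for the applications (only strict negativity is used), and the paper's one-line deduction of \eqref{implicit coercivity} from \eqref{coercivity} has the same feature.
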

\begin{proof}
By virtue of   Lemma 3.1 and Theorem 3.2 in \cite{GyKr10c}, under Assumption \ref{asm:coeff}, there is a constant $N=N(d,m,\varkappa)$ such that for any $u\in H^m$ and $\epsilon>0$, 
$$
2(u,\cL_t^hu)_m+\|\cN^h_tu\|_{m,\ell_2}^2\le -(\varkappa-\epsilon)\sum_{i=1}^d\|\delta_{h,i}u\|_m^2+N\|u\|_m^2.
$$
Therefore,  
 it suffices to show that  there is a constant $N=N(\delta)$ such that for all $u \in C^\infty_c(\bR^d)$,
\begin{equation}        \label{eq:intdiffbilin}
2(u,I^hu)_m+\int_{\bR^d} \|\cI^h(z)u\|^2_m \pi_2(dz) \leq \varsigma(\delta)\sum_{i=1}^d\|\delta_{h,i}u\|^2_m+ N\|u\|^2_m.
\end{equation}
We start with $m=0$. Since
$$
(u, I^h_\delta u)_0
=\sum_{k=0}^\infty\int_{B^h_k} \sum_{l=1}^{\chi(h,k)} \sum_{i,j=1}^d\bar{\theta}^{k,h}_lz^iz^j\int_{\bR^d} \delta_{h,i} \delta_{-h,j}u(x+hz_{r^{h,k}_l})u(x) dx \pi_1(dz)
$$
and
$$
\int_{\bR^d} \delta_{h,i} \delta_{-h,j}u(x+hz_{r^{h,k}_l})u(x) dx= -\int_{\bR^d} \delta_{h,i} u(x+hz_{r^{h,k}_l})\delta_{h,j}u(x) dx,
$$
by H\"older's inequality, we get
\begin{equation}                  \label{coercivity I delta}
2(u, I^h_\delta u)_0\leq  \int_{|z|\leq \delta} |z|^2\pi_1(dz)\sum_{i=1}^d \|\delta_{h,i}u\|^2_0 =\varsigma_1(\delta)\sum_{i=1}^d \|\delta_{h,i}u\|^2_0 .
\end{equation}
In addition, owing to Holder's inequality and \eqref{eq:symdifbillinzero}, we have
\begin{equation} \label{coercivity I delta c}
2(u, I^h_{\delta^c} u)_0=\sum_{k=0}^\infty\int_{\bar{B}^h_k} \int_{\bR^d} \left(u(x+hz_k)-u(x)-\mathbf{1}_{[-1,1]}(|z|)\sum_{i=1} ^dz^i\delta^{h}_iu(x)\right)u(x) dx  \pi_1(dz)\le 0.
\end{equation}
By Minkowski's inequality,  we have
$$
\|\cI^{\delta;h}(z)u \|^2 
\leq \sum_{k=0}^\infty \mathbf{1}_{B^h_k}(z) |z|^2\sum_{i=1}^d \|\delta_{h,i} u\|^2_0 \;\; \textrm{and}\;\; 
\|\cI^{\delta^c;h}(z)u \|^2_0 \leq 4\sum_{k=0}^\infty \mathbf{1}_{\bar{B}^h_k}(z)\|u\|^2_0
$$
and hence
\begin{equation}	\label{Jsqauredincoerc}
\int_{\bR^d} \|\cI^h(z)u\|^2_0 \pi_2(dz) \leq \varsigma_2(\delta)\sum_{i=1}^d\|\delta_{h,i}u\|^2_0+ 4\pi_1(\{|z|>\delta\})\|u\|^2_0,
\end{equation}
which proves \eqref{eq:intdiffbilin} for $m=0$. The case $m>0$ follows by replacing $u$ with $\partial^{\gamma}u$ for $|\gamma|\le m$.   This proves \eqref{coercivity}, which  implies \eqref{implicit coercivity}.
\end{proof}
\begin{remark}
It follows that for $m\ge 0$, there is a constant $N_5=N_5(d,m,K,\delta)$   such that for any $u\in H^m$, 
\begin{equation}        \label{eq:MJsquared}
\|\cN^{h}_tu\|_{m, \ell_2}^2+\int_{\bR^d}
\|\cI^h(z)u\|_m^2\pi_2(dz) \le N_5\sum_{i=0}^d\|\delta_{h,i}u\|_m^2
\end{equation}
\begin{equation}        \label{eq:MJsquaredh}
\le N_5\left(1+ \frac{4d}{h^2}\right)\|u\|_m^2.
\end{equation}
\end{remark}

\begin{lemma}   \label{lem:LIsquared}
For any $m\ge 0$ and $u \in H^m$, 
\begin{equation}        \label{eq:Itildesquared}
\|\tilde{I}^h_{\delta^c}u\|_m^2\le \pi_1(\{|z|>\delta\})^2\|u\|_{m}^2.
\end{equation}
Moreover, if Assumption \ref{asm:coeff} holds for some $m\ge 0$, then for any  $\epsilon>0$ and $u \in H^m$,
\begin{equation}        \label{eq:LIsquared}
\|(\cL^h_t+I^h)u\|_m^2\le (1+\varepsilon)\frac{N_3d}{h^2}\sum_{i=1}^d\|\delta_{h,i}u\|_{m}^2 +N_4\left(1+\frac{1}{h^2}\right)\|u\|_m^2
\end{equation}
where
$$
N_3:=\left(2\left(\sup_{t,x,\omega}\sum_{i,j=1}^d|a^{ij}(x)|^2\right)^{1/2}+\varsigma_1(\delta)\right)^2
$$
and
 $N_4$ is a constant depending only on $d,m,K, \delta,$ and $\epsilon$.
\end{lemma}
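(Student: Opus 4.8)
The plan is to prove the two estimates separately: \eqref{eq:Itildesquared} is a one-line consequence of Minkowski's inequality, while \eqref{eq:LIsquared} requires isolating the leading second-difference part of $\cL^h_t+I^h$ and tracking its constant exactly.

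For \eqref{eq:Itildesquared}, recall $\tilde{I}^h_{\delta^c}u=\sum_{k=0}^\infty u(\cdot+hz_k)\bar{\zeta}_{h,k}$ with $\bar{\zeta}_{h,k}=\pi_1(\bar{B}^h_k)\ge 0$ and $\sum_k\bar{\zeta}_{h,k}=\pi_1(\{|z|>\delta\})$, which is finite by \eqref{delta complement of the levy measure}. Since translation is an isometry on $H^m$, Minkowski's inequality gives $\|\tilde{I}^h_{\delta^c}u\|_m\le\sum_k\bar{\zeta}_{h,k}\|u(\cdot+hz_k)\|_m=\pi_1(\{|z|>\delta\})\|u\|_m$, and squaring yields \eqref{eq:Itildesquared}.

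For \eqref{eq:LIsquared}, I would first reduce to $m=0$: the operators $I^h_\delta$ and $I^h_{\delta^c}$ commute with $\partial^\gamma$, while $\partial^\gamma(a^{ij}_t\delta_{h,i}\delta_{-h,j}u)$ equals the principal term $a^{ij}_t\delta_{h,i}\delta_{-h,j}\partial^\gamma u$ plus Leibniz commutators in which a derivative of order $\ge 1$ falls on a coefficient $a^{ij}_t$, bounded by Assumption \ref{asm:coeff}(i). I would then decompose $(\cL^h_t+I^h)u=L+R$, where the principal part is $L=\sum_{i,j=1}^d a^{ij}_t\delta_{h,i}\delta_{-h,j}u+I^h_\delta u$ and $R$ collects everything else: the $i=0$ or $j=0$ pieces of $\cL^h_t$, the operator $I^h_{\delta^c}$, and the Leibniz commutators. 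For $L$ I would estimate the two summands and combine by the triangle inequality so their coefficients add. For the $\cL$-part, pointwise Cauchy--Schwarz over $(i,j)$ extracts $A:=(\sup_{t,x,\omega}\sum_{i,j=1}^d|a^{ij}_t|^2)^{1/2}$, and $\|\delta_{h,i}\delta_{-h,j}w\|_m\le\tfrac{2}{|h|}\|\delta_{h,j}w\|_m$ (using $\|\delta_{-h,j}w\|_m=\|\delta_{h,j}w\|_m$, since $\delta_{-h,j}w$ is a translate of $\delta_{h,j}w$) gives $\|\sum_{i,j=1}^d a^{ij}_t\delta_{h,i}\delta_{-h,j}u\|_m\le\frac{2A\sqrt{d}}{|h|}(\sum_i\|\delta_{h,i}u\|_m^2)^{1/2}$. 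For the $I^h_\delta$-part, writing $\sum_{i,j}\zeta^{ij}_{h,k}\delta_{h,i}\delta_{-h,j}=\int_{B^h_k}(\sum_iz^i\delta_{h,i})(\sum_jz^j\delta_{-h,j})\,\pi_1(dz)$ and using $\sum_{l}\bar{\theta}^{h,k}_l=\int_0^1(1-\theta)d\theta=\tfrac12$, $\bigcup_kB^h_k=\{|z|\le\delta\}$, translation invariance, and the same difference bounds, I obtain $\|I^h_\delta u\|_m\le\frac{\sqrt{d}\,\varsigma_1(\delta)}{|h|}(\sum_i\|\delta_{h,i}u\|_m^2)^{1/2}$; here the $\tfrac12$ cancels the factor $2$, leaving exactly $\varsigma_1(\delta)$. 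Adding the two coefficients yields $\|L\|_m\le\frac{(2A+\varsigma_1(\delta))\sqrt{d}}{|h|}(\sum_i\|\delta_{h,i}u\|_m^2)^{1/2}$, so $\|L\|_m^2\le\frac{N_3d}{h^2}\sum_i\|\delta_{h,i}u\|_m^2$.

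It remains to bound $R$ and to combine. Every term in $R$ is at most a first-order difference operator with a bounded or summable weight: the $i=0$ or $j=0$ terms of $\cL^h_t$ are controlled by $K$ together with $\|\delta_{h,i}u\|_m\le\tfrac{2}{|h|}\|u\|_m$; the operator $I^h_{\delta^c}$ is controlled by $\pi_1(\{|z|>\delta\})$ and $\int_{\delta<|z|\le 1}|z^i|\,\pi_1(dz)$; and each Leibniz commutator carries a second difference of a derivative of order $<m$, hence is bounded by $Ch^{-2}\|u\|_{m-1}^2\le Ch^{-2}\|u\|_m^2$. Thus $\|R\|_m^2\le N_4(1+h^{-2})\|u\|_m^2$. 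Finally, the elementary inequality $\|L+R\|_m^2\le(1+\epsilon)\|L\|_m^2+(1+\epsilon^{-1})\|R\|_m^2$ puts the factor $(1+\epsilon)$ on the principal term and absorbs $(1+\epsilon^{-1})$ into $N_4=N_4(d,m,K,\delta,\epsilon)$, giving \eqref{eq:LIsquared}. The main obstacle is bookkeeping rather than analysis: obtaining exactly $N_3=(2A+\varsigma_1(\delta))^2$ forces one to combine the two second-difference contributions by the triangle inequality \emph{before} squaring, and one must check that the Leibniz commutators---although they still scale like $h^{-2}$---may legitimately be discarded into the lower-order term $N_4(1+h^{-2})\|u\|_m^2$ rather than contaminating the principal coefficient.
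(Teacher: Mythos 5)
Your proof is correct and follows essentially the same route as the paper: the same split into a principal second-difference part (carrying the sharp constant $N_3$) plus lower-order remainders absorbed into $N_4(1+h^{-2})\|u\|_m^2$, with Young's inequality supplying the $(1+\epsilon)$ factor at the end. The only cosmetic difference is that the paper merges $\sum_{i,j=1}^d a^{ij}_t\delta_{h,i}\delta_{-h,j}$ into the $k=0$ term of the $I^h_\delta$ sum (via $\hat{\zeta}^{ij}_{t,h,0}=\zeta^{ij}_{h,0}+2a^{ij}_t$, using $\bar{\theta}^{h,0}_1=\tfrac12$) and applies Minkowski once, whereas you estimate the two pieces separately and add the coefficients by the triangle inequality before squaring; both yield $N_3=\bigl(2A+\varsigma_1(\delta)\bigr)^2$.
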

\begin{proof}
It suffices to prove the lemma for $u \in C^\infty_c(\bR^d)$. 
It follows that 
$$
(\cL_t^h+I_{\delta}^h)u(x)=\sum_{k=0}^\infty \sum_{l=1}^{\chi(h,k)} \bar{\theta}^{h,k}_l \sum_{i,j=1}^d \hat{\zeta}^{ij}_{t,h,k} (x)\delta_{h,i} \delta_{-h,j} u(x+ h z_{r^{h,k}_l})
+\underset{i\;\textrm{or}\;j=0}{\sum_{i,j=0}^d}a_t^{ij}\delta_{h,i}\delta_{-h,j}u(x)
$$
where $\hat{\zeta}^{ij}_{t,h,k}(x):=\zeta^{ij}_{h,k}$ for $k\ne 0$ and $\hat{\zeta}^{ij}_{t,h,0}(x):=\zeta^{ij}_{h,0}+2a^{ij}_t(x)$ (recall that $\bar{\theta}_1^{h,0}=\frac{1}{2}$  and $\chi(h,0)=1$). Moreover,  for each multi-index $\gamma$ with $1\le |\gamma|\le m$,
\begin{gather}
\partial^{\gamma}(\cL_t^h+I^h_{\delta})u(x)=\sum_{k=0}^\infty \sum_{l=1}^{\chi(h,k)} \bar{\theta}^{h,k}_l \sum_{i,j=1}^d \hat{\zeta}^{ij}_{h,k} (x)\delta_{h,i} \delta_{-h,j} \partial ^{\gamma}u(x+ h z_{r^{h,k}_l})
\\
+\sum_{\{\beta\;:\;\beta<\gamma\}}N(\beta,\gamma)\sum_{i,j=1}^d \left(\partial^{\gamma-\beta}a^{ij}_t(x)\right)\delta_{h,i} \delta_{-h,j} \partial^{\beta}u(x)
\\
+\sum_{\{\beta\;:\;\beta\le \gamma\}}N(\beta,\gamma)\underset{i\;\textrm{or}\;j=0}{\sum_{i,j=0}^d}\left(\left(\partial^{\gamma-\beta}a^{ij}_t(x)\right)\delta_{h,i}\delta_{-h,j}\partial^{\beta}u(x)\right)\\
=:(A_1(\gamma)+A_2(\gamma)+A_3(\gamma))u(x),
\end{gather}
where $N(\beta,\gamma)$ are constants depending only on $\beta$ and $\gamma$. By Young's inequality and Jensen's inequality, for any $\epsilon\in (0,1)$,  we have
\begin{gather*}
\|(\cL^h_t+I^h)u\|_m^2 \le(1+\epsilon) \sum_{|\gamma|\le m}\|A_1(\gamma)u\|_0^2\\
+3\left(1+\frac{1}{\epsilon}\right)\left[\sum_{|\gamma|\le m}\left(\|A_2(\gamma)u\|_0^2+\|A_3(\gamma)u\|_0^2\right)+\|I_{\delta^c}^hu\|_m^2\right].
\end{gather*}
Applying  Minkowski's inequality and the Cauchy-Bunyakovsky-Schwarz inequality and noting that $\sum_{l=1}^{\chi(h,k)} \bar{\theta}^{h,k}_l=\frac{1}{2}$ and
 $$
 ||\delta_{h,i}\partial^{\beta}u||_{0}\le \frac{2}{h}||\partial^{\beta}u||_{0}  ;\;\forall i\in \{0,1\ldots,d\},\;\;\forall |\beta|=m,
 $$ we obtain
\begin{gather*}
\|A_1(\gamma)u\|_0\le 
\sum_{k=0}^\infty \sum_{l=1}^{\chi(h,k)} \bar{\theta}^{h,k}_l \left(\sup_{t,x,\omega}\sum_{i,j=1}^d\left|\hat{\zeta}^{ij}_{h,k}(x)\right|^2\right)^{1/2}\left(\sum_{i,j=1}^d\left|\left|\delta_{h,i} \delta_{-h,j} u(\cdot+ h z_{r^{h,k}_l})\right|\right|^2_m\right)^{1/2} 
\\
\le\frac{\sqrt{d}}{h}\sum_{k=0}^\infty\left(\sup_{t,x,\omega}\sum_{i,j=1}^d|\hat{\zeta}^{ij}_{h,k}(x)|^2\right)^{1/2}\left(\sum_{i=1}^d\|\delta_{h,i} \partial^{\gamma}u\|_0^2\right)^{1/2}
\end{gather*}
and
\begin{gather*}
\sum_{k=0}^\infty\left(\sup_{t,x,\omega}\sum_{i,j=1}^d|\hat{\zeta}^{ij}_{h,k}(x)|^2\right)^{1/2} =\left(\sup_{t,x,\omega}\sum_{i,j=1}^d \left| \int_{B_0^h}z^iz^j\pi_1(dz)+2a^{ij}_t(x)\right|^2\right)^{1/2} \\
+\sum_{k=1}^\infty \left(\sum_{i,j=1}^d \left|\int_{B_k^h}z^iz^j\pi_1(dz)\right|^2\right)^{1/2}
\le2\left(\sup_{t,x,\omega }\sum_{i,j=1}^d |a^{ij}_t(x)|^2\right)^{1/2}+\varsigma(\delta).
\end{gather*}
Thus,
$$
\sum_{|\gamma|\le m}||A_1(\gamma)u||_0^2 \le \frac{N_3d}{h^2}\sum_{i=1}^d \|\partial_{h,i}u\|_m^2.
$$
Another application of the Cauchy-Bunyakovsky-Schwarz inequality and \newline  Minkowski's inequality, combined with the inequalities 
\begin{gather*}
||\delta_{h,i}\partial^{\beta}u||_{0}\le ||\partial_i\partial^{\beta}u||_{0} \;\;\forall i\in \{0,1\ldots,d\},\;\;\forall |\beta|\le m-1,\\
||\delta_{h,i}\delta_{-h,j}\partial ^{\beta}u||_{0}\le ||\partial_{ij}\partial^{\beta}u||_{0}, \;\forall ;i,j\in\{1,\ldots,d\}, \;\;\forall |\beta|\le m-2,
\end{gather*}
and
$$
||\delta_{h,i}\delta_{-h,j}\partial^{\beta}u||_{0}\le \frac{2}{h}\|\delta_{h,i}u\|_{m},\;\;\forall ;i,j\in\{1,\ldots,d\},\;\;\forall |\beta|= m-1,
$$
 yields
$$
\sum_{|\gamma|\le m}\left(\|A_2(\gamma)u\|_0^2+\|A_3(\gamma)\|_0^2\right)\le N\left(1+\frac{1}{h^2}\right)\left|\left|u\right|\right|_m^2.
$$
By Minkowski's integral inequality, we have
$$
\|I^h_{\delta^c}u\|_m\le\int_{\bR^d} \sum_{k=0}^\infty\mathbf{1}_{\overline{B}^h_k}\| u(\cdot+hz_k)-u-\mathbf{1}_{[-1,1]}(z)\sum_{i=1}^dz^i\delta_{h,i}u\|_m\pi_1(dz)
$$
$$
\le 3\left(\pi_1(\{|z|>\delta\})+\frac{2d\int_{\delta<|z|\le 1}|z|\pi_1(dz)}h\right)\|\partial^{\gamma}u\|_0.
$$
It is also easy to see that \eqref{eq:Itildesquared} holds. 
Combining  above inequalities, we obtain \eqref{eq:LIsquared}.
\end{proof}

The following theorem  establishes the stability of  the explicit approximate scheme \eqref{explicit:discretised equation on R^d}.
\begin{theorem}         \label{thm:stability explicit}
Let  Assumption \ref{asm:coeff} hold with $m\ge 0$ and Assumption \ref{asm:tauhbound} hold. Let $F^i \in \bH^m$ for  $i\in \{0,...,d \}$,   $G\in \bH^m(\ell_2),$ and $R\in \bH^m(\pi_2)$.
 Consider   the following scheme in $H^m$:
\begin{align}      \label{discretised equation on the R^d with free term:explicit}    
u^{h,\tau}_n&=u^{h,\tau}_{n-1}+\int_{]t_{n-1},t_n]}\left((\cL_{t_{n-1}}^h+I^h)u_{n-1}^{h,\tau}+\sum_{i=0}^d\delta_{h,i}F^i_t\right)dt+\int_{]t_{n-1},t_n]}\left( \cN^{\varrho;h}_{t_{n-1}}u_{n-1}^{h,\tau}+G^\varrho_t\right) dw^\varrho_t\nonumber\\
&\quad +\int_{]t_{n-1},t_n]}\int_{\bR^d}\left(\cI^h(z)u^{h,\tau}_{n-1}+R_t(z)\right)q(dt,dz),\;\;n\in \{1,\ldots,\cT\}, 
\end{align}
for any  $H^{m}-$valued $\mathscr{F}_0-$measurable initial condition $\varphi$. If $\bE\|\varphi\|_{m}^2<\infty$,  then  there is a constant $N=N(d,m, \varkappa, K, T,\delta)$ such that
$$
\bE\max_{0\le n\le \cT}\|u^{h,\tau}_n\|^2_{m}+\bE\sum_{n=0}^{\cT} \tau \sum_{i=0}^d \|\delta_{h,i}u^{h,\tau}_n\|^2_{m} \leq N\bE\|\varphi\|^2_{m}
$$
\begin{equation}             \label{main estimate:explicit}
+N\bE\int_0^T\Big( \sum_{i=0}^d\|F^i_t\|^2_{m}+\|G_t\|^2_{m}+\int_{\bR^d} \|R_t(z)\|^2_{m}\pi_2(dz) \Big)dt.
\end{equation}
\end{theorem}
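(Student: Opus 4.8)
\section*{Proof proposal for Theorem \ref{thm:stability explicit}}

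The plan is to run a discrete energy estimate in $H^m$, mimicking the $L_2$-theory of the continuous equation. I would write the one-step increment of \eqref{discretised equation on the R^d with free term:explicit} as $u^{h,\tau}_n-u^{h,\tau}_{n-1}=D_n+W_n+J_n$, where $D_n=\tau(\cL^h_{t_{n-1}}+I^h)u^{h,\tau}_{n-1}+\int_{]t_{n-1},t_n]}\sum_{i=0}^d\delta_{h,i}F^i_t\,dt$ (the integrand of the operator part being constant in $t$), and where $W_n=\sum_\varrho\int_{]t_{n-1},t_n]}(\cN^{\varrho;h}_{t_{n-1}}u^{h,\tau}_{n-1}+G^\varrho_t)\,dw^\varrho_t$ and $J_n=\int_{]t_{n-1},t_n]}\int_{\bR^d}(\cI^h(z)u^{h,\tau}_{n-1}+R_t(z))\,q(dt,dz)$ are martingale increments. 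Expanding $\|u^{h,\tau}_n\|_m^2=\|u^{h,\tau}_{n-1}\|_m^2+2(u^{h,\tau}_{n-1},D_n+W_n+J_n)_m+\|D_n+W_n+J_n\|_m^2$ and conditioning on $\cF_{t_{n-1}}$, the cross terms pairing the $\cF_{t_{n-1}}$-measurable part of the increment with $W_n$ or $J_n$, together with $(W_n,J_n)_m$, vanish in conditional expectation by the martingale property and the independence of $w$ and $q$; the cross terms involving the free data survive but will be disposed of by Young's inequality. Using the It\^o isometry for the Wiener integrals and the isometry for the compensated Poisson integral, the surviving second-order terms organise into $\tau\bQ^{(m)}_{t_{n-1}}(u^{h,\tau}_{n-1})$, the purely explicit contribution $\|D_n\|_m^2$, and free-term remainders.

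I would then dispose of the lower-order and free contributions. The coercivity estimate \eqref{coercivity} bounds $\tau\bQ^{(m)}_{t_{n-1}}(u^{h,\tau}_{n-1})$ by $-\tau(\varkappa-\varsigma(\delta)-\epsilon)\sum_{i=1}^d\|\delta_{h,i}u^{h,\tau}_{n-1}\|_m^2+N_1\tau\|u^{h,\tau}_{n-1}\|_m^2$. The cross term between $u^{h,\tau}_{n-1}$ and the divergence-form free drift is handled by the summation-by-parts identity \eqref{eq:conjugatedelh}, rewriting $\int(u^{h,\tau}_{n-1},\delta_{h,i}F^i_t)_m\,dt=-\int(\delta_{-h,i}u^{h,\tau}_{n-1},F^i_t)_m\,dt$, so that Young's inequality produces a small multiple of the dissipation plus $N\int_0^T\sum_i\|F^i_t\|_m^2\,dt$. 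The free cross terms coming from the two isometries are treated identically, using \eqref{eq:MJsquared} to estimate $\|\cN^{h}_{t_{n-1}}u^{h,\tau}_{n-1}\|_{m,\ell_2}^2$ and $\int_{\bR^d}\|\cI^h(z)u^{h,\tau}_{n-1}\|_m^2\pi_2(dz)$ by $N_5\sum_{i=0}^d\|\delta_{h,i}u^{h,\tau}_{n-1}\|_m^2$, leaving $N\int_0^T(\|G_t\|_{m,\ell_2}^2+\int_{\bR^d}\|R_t(z)\|_m^2\pi_2(dz))\,dt$.

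The crucial step is the control of the explicit term $\|D_n\|_m^2$. Splitting off the free drift by Young's inequality leaves $(1+\epsilon)\tau^2\|(\cL^h_{t_{n-1}}+I^h)u^{h,\tau}_{n-1}\|_m^2$ and a free part bounded by $N\tau\int_{]t_{n-1},t_n]}\sum_i\|\delta_{h,i}F^i_t\|_m^2\,dt$; the latter carries a factor $h^{-2}$, but after summation in $n$ the product $\tau h^{-2}$ is bounded through \eqref{ineq:tauoverh2bound}, so it feeds cleanly into $N\int_0^T\sum_i\|F^i_t\|_m^2\,dt$. For the dangerous factor I invoke \eqref{eq:LIsquared}, which gives $\tau^2\|(\cL^h_{t_{n-1}}+I^h)u^{h,\tau}_{n-1}\|_m^2\le(1+\epsilon)\tfrac{N_3 d\tau}{h^2}\,\tau\sum_{i=1}^d\|\delta_{h,i}u^{h,\tau}_{n-1}\|_m^2+N_4\tau^2(1+h^{-2})\|u^{h,\tau}_{n-1}\|_m^2$. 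This is precisely where Assumption \ref{asm:tauhbound} is used: \eqref{ineq:tauoverh2bound} asserts $N_3 d\tau/h^2<\varkappa-\varsigma(\delta)$, so that, choosing $\epsilon$ in Lemmas \ref{lem:coercivity} and \ref{lem:LIsquared} small enough, the coefficient $(1+\epsilon)N_3 d\tau/h^2$ of the explicit dissipation does not exceed the coercive gain $\varkappa-\varsigma(\delta)-\epsilon$, and the net coefficient of $\tau\sum_i\|\delta_{h,i}u^{h,\tau}_{n-1}\|_m^2$ is non-positive. The residual $N_4\tau^2 h^{-2}\|u^{h,\tau}_{n-1}\|_m^2$ is again turned into a Gronwall term $N\tau\|u^{h,\tau}_{n-1}\|_m^2$ via \eqref{ineq:tauoverh2bound}. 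Matching the two $\epsilon$'s so that the dissipation survives with a strictly negative coefficient is the main obstacle, and is the entire reason the explicit scheme requires a CFL-type restriction; an unfavourable sign here would make the iteration blow up as $\cT\to\infty$, since the stray dissipation is only of order $\|u\|_m^2$ rather than $\tau\|u\|_m^2$.

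Summing the resulting per-step inequality over $n$ and taking expectations, a discrete Gronwall argument yields both $\bE\|u^{h,\tau}_n\|_m^2$ and $\bE\sum_n\tau\sum_i\|\delta_{h,i}u^{h,\tau}_n\|_m^2$ bounded by the right-hand side of \eqref{main estimate:explicit}. To upgrade $\max_n\bE$ to $\bE\max_n$, rather than conditioning immediately I would retain the martingale part $M_n=\sum_{k\le n}\big(2(u^{h,\tau}_{k-1},W_k+J_k)_m+\|W_k\|_m^2+\|J_k\|_m^2-\bE[\|W_k\|_m^2+\|J_k\|_m^2\mid\cF_{t_{k-1}}]\big)$ and estimate $\bE\max_n|M_n|$ by the Burkholder--Davis--Gundy inequality. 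Its quadratic variation is dominated by $\max_k\|u^{h,\tau}_k\|_m^2$ times $\sum_k\tau\big(\|\cN^{h}_{t_{k-1}}u^{h,\tau}_{k-1}\|_{m,\ell_2}^2+\int_{\bR^d}\|\cI^h(z)u^{h,\tau}_{k-1}\|_m^2\pi_2(dz)\big)$ plus data, and splitting by Young's inequality absorbs $\tfrac12\bE\max_k\|u^{h,\tau}_k\|_m^2$ into the left-hand side while the remaining factor is controlled by the energy estimate just obtained and the data norms. This closes the proof.
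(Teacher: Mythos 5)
Your proposal is correct and follows essentially the same route as the paper: the same discrete energy identity in $H^m$, the same use of summation by parts \eqref{eq:conjugatedelh}, the coercivity bound \eqref{coercivity}, the operator-norm bound \eqref{eq:LIsquared} combined with the CFL condition \eqref{ineq:tauoverh2bound} to absorb the explicit term $\|D_n\|_m^2$ into the dissipation, discrete Gronwall, and finally Burkholder--Davis--Gundy to upgrade $\max_n\bE$ to $\bE\max_n$. The only cosmetic difference is that you compensate the quadratic martingale increments before applying BDG, whereas the paper exploits the nonnegativity of $\|\eta(t_l)\|_m^2$ directly; both work.
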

\begin{proof}
 If $\bE\|\varphi\|_{m}^2<\infty$, then proceeding by induction on $n$ and using Young's and Jensen's inequality, It\^o's isometry, \eqref{eq:LIsquared}, and \eqref{eq:MJsquaredh}, we get that  for all $n\in\{0,1,\ldots,\cT\},$ 
$\bE\|u_n^{h,\tau}\|_{m}^2< \infty.$  Applying  the identity $\|y\|_m^2-\|x\|_m^2=2(x,y-x)_m+\|y-x\|_m^2$, $x,y\in H^m$, for each $n\in \{1,\ldots,\cT\}$, we obtain
\begin{equation}        \label{eq:discrete Itos explicit}
\|u_n^{h,\tau}\|_m^2=\|u_{n-1}^{h,\tau}\|_m^2+\sum_{i=1}^{6}I_i(t_n),
\end{equation}
where 
\begin{gather*}
I_1(t_n):=2\tau(u^{h,\tau}_{n-1},\left(\cL^h_{t_{n-1}}+I^h\right)u^{h,\tau}_{n-1})_m +\|\eta(t_n)\|_m^2,\allowbreak\\
I_2(t_n):=2\int_{]t_{n-1},t_n]}\sum_{i=0}^d(u^{h,\tau}_{n-1},\delta_{h,i}F^i_t)_mdt,\\
I_3(t_n):=\left|\left|\tau\left(\cL^h_{t_{n-1}}+I^h\right)u^{h,\tau}_{n-1}+\int_{]t_{n-1},t_n]}\sum_{i=0}^d\delta_{h,i}F_t^idt\right|\right|_m^2,\allowbreak
\\
I_4(t_n):=2\int_{]t_{n-1},t_n]}\left(u^{h,\tau}_{n-1}, \cN^{\varrho;h}_{t_{n-1}}u_{n-1}^{h,\tau}+G^\varrho_t\right)_m dw^\varrho_t,\\
I_5(t_n):=2\int_{]t_{n-1},t_n]}\int_{\bR^d}\left(u^{h,\tau}_{n-1},\cI^h(z)u^{h,\tau}_{n-1}+R_t(z)\right)_mq(dt,dz),\allowbreak
\\
I_{6}(t_n):=2\left(\tau(\cL^h_{t_{n-1}}+I^h)u^{h,\tau}_{n-1},\eta(t_n)\right)_m+2\left(\int_{]t_{n-1},t_n]}\sum_{i=0}^d\delta_{h,i}F^i_tdt,\eta(t_n)\right)_m,
\end{gather*}
and where 
$$
\eta(t_n):=\int_{]t_{n-1},t_n]}\left( \cN^{\varrho;h}_{t_{n-1}}u_{n-1}^{h,\tau}+G^\varrho_t\right) dw^\varrho_t
+\int_{]t_{n-1},t_n]}\int_{\bR^d}\left(\cI^h(z)u^{h,\tau}_{n-1}+R_t(z)\right)q(dt,dz).
$$
By virtue of Assumption \ref{asm:tauhbound}, we fix  $\tilde{q}>0$ and $\epsilon>0$ small enough such that \begin{equation}\label{choiceqtild}
\overline{q}:=\varkappa-\varsigma(\delta)-\epsilon- (1+\epsilon)(1+\tilde{q})N_3d\frac{\tau}{h^2}-\tilde{q}>0,
\end{equation} 
where $N_3$ is the constant  in \eqref{lem:LIsquared}. 
Since the two stochastic integrals that define $\eta$ are orthogonal square-integrable martingales,  by Young's inequality and  \eqref{eq:MJsquared}, for all $\mathfrak{q}>0$, 
$$
\bE\|\eta(t_n)\|_m^2
\le \bE\tau\| \cN^{h}_{t_{n-1}}u_{n-1}^{h,\tau}\|_{m,\ell_2}^2+\bE\tau
\int_{\bR^d}\|\cI^h(z)u^{h,\tau}_{n-1}\|_m^2\pi_2(dz)+\mathfrak{q}\bE\tau\sum_{i=0}^d\|\delta_{h,i}u^{h,\tau}_{n-1}\|_m^2 
$$
\begin{equation}	\label{e:quadvareta}
+\left(1+\frac{N_5}{\mathfrak{q}}\right)\bE\int_{]t_{n-1},t_n]}\left(\| G_t\|_{m,\ell_2}^2+\int_{\bR^d}\|R_t(z)\|_m^2\pi_2(dz)\right)dt.
\end{equation}
Thus,  taking $\mathfrak{q}=\frac{\tilde{q}}{3}$ in	\eqref{e:quadvareta}, we have
$$
EI_1(t_n)\le \bE\tau\bQ^{(m)}_{t_{l-1}}(u^{h,\tau}_{l-1})+\frac{\tilde{q}}{3}\bE\tau\sum_{i=0}^d\|\delta_{h,i}u^{h,\tau}_{n-1}\|_m^2 
$$
$$
+  \left(1+\frac{3N_5}{\tilde{q}}\right)\bE\int_{]t_{n-1},t_n]}\left(\| G_t\|_{m,\ell_2}^2+\int_{\bR^d}\|R_t(z)\|_m^2\pi_2(dz)\right)dt.
$$
Using  \eqref{eq:conjugatedelh}  and Young's inequality,  we obtain
$$EI_2(t_n)
\le \frac{\tilde{q}}{3}\bE\tau \sum_{i=0}^d \|\delta_{h,i}u_{n-1}^{h,\tau}\|_m^2 + \frac{3}{\tilde{q}}\bE\int_{]t_{n-1},t_n]}\sum_{i=0}^d\|F^i_t\|_m^2dt.
$$
An application of  Young's inequality and  \eqref{eq:LIsquared} yields
\begin{gather*}
EI_3(t_n)\le (1+\epsilon)(1+\tilde{q}) N_3d\frac{\tau}{h^2}\bE\tau \sum_{i=1}^d\|\delta_{h,i}u^{h,,\tau}_{n-1}\|_{m}^2
 +(1+\tilde{q})N_4\left(\tau+\frac{\tau}{h^2}\right)\bE\tau\|u^{h,\tau}_{n-1}\|_m^2\\
+ (d+1)\left(1+\frac{1}{\tilde{q}}\right)\bE\int_{]t_{n-1},t_n]}\left(\tau\|F_t^0\|_m^2+\frac{4d\tau}{h^2}\sum_{i=1}^d\|F_t^i\|_m^2\right)dt.
\end{gather*}
Making use of the estimate \eqref{eq:MJsquaredh} and noting that $\bE\|u_n^{h,\tau}\|_{m}^2< \infty,$ $G\in \bH^m(\ell_2)$, and $R\in \bH^m(\pi_2),$ we obtain
$
EI_4(t_n)=EI_5(t_n)=0.
$
Moreover, as $(\cL^h_{t_{n-1}}+I^h)u^{h,\tau}_{n-1}$ is $\cF_{t_{n-1}}$-measurable and $E(\eta(t_n)|\cF_{t_{n-1}})=0$,  the expectation of  first term in $I_6(t_n)$ is zero, and hence  by Young's inequality, for any $\mathfrak{q}_1>0$,
$$
EI_6(t_n)
\le  \mathfrak{q}_1\bE\|\eta(t_n)\|_m^2+ \frac{1}{\mathfrak{q}_1}E\left|\left|\int_{]t_{n-1},t_n]}\sum_{i=0}^d\delta_{h,i}F^i_tdt\right|\right|_m^2.
$$
Moreover, by Jensen's  inequality, 	\eqref{e:quadvareta}, and \eqref{eq:MJsquared}, for any $\mathfrak{q}_1>0$ and $\mathfrak{q}>0$,
\begin{gather*}
EI_6(t_n)\le (\mathfrak{q}_1\mathfrak{q}+\mathfrak{q}_1N_5)\bE\tau\sum_{i=0}^d\|\delta_{h,i}u^{h,\tau}_{n-1}\|_m^2 \\
+ \bE\int_{]t_{n-1},t_n]}\left(\frac{(d+1)\tau}{\mathfrak{q}_1}\|F_t^0\|_m^2+\frac{4d(d+1)\tau}{\mathfrak{q}_1h^2}\sum_{i=1}^d\|F_t^i\|_m^2\right)dt\\
+\mathfrak{q}_1\left(1+\frac{N_5}{\mathfrak{q}}\right)\bE\int_{]t_{n-1},t_n]}\left(\| G_t\|_{m,\ell_2}^2+\int_{\bR^d}\|R_t(z)\|_m^2\pi_2(dz)\right)dt.
\end{gather*}
We choose $\mathfrak{q}$ and $\mathfrak{q}_1$ such that $\mathfrak{q}_1\mathfrak{q}+\mathfrak{q}_1N_5\le \tilde{q}/3$. Thus, owing to \eqref{coercivity}, we have
$$
E\bQ^{(m)}_{t_{n-1}}(u^{h,\tau}_{n-1})+\left(\tilde{q}+ (1+\epsilon) (1+\tilde{q}) N_3d\frac{\tau}{h^2}\right)\bE\tau \sum_{i=1}^d\|\delta_{h,i}u_{n-1}^{h,\tau}\|_m^2
$$
$$
\le- \overline{q}\bE\tau \sum_{i=1}^d\|\delta_{h,i}u_{n-1}^{h,\tau}\|_m^2+N_1\bE\tau ||u^{h,\tau}_{n-1}||_m^2.
$$
Taking the  expectation of both sides of \eqref{eq:discrete Itos explicit}, summing-up, and combining the above inequalities and identities, we find that there is a constant $N=N(d,m,\varkappa,K,\delta)$ such that for all $n\in \{0,1,\ldots,\cT\}$, 
\begin{gather*}
\bE\|u_n^{h,\tau}\|^2_m\le \bE\|\varphi\|_m^2-\overline{q}\bE\sum_{l=1}^n\tau\sum_{i=1}^d\|\delta_{h,i}u_{l-1}^{h,\tau}\|_{m}^2
+\left(N_1+\tilde{q}+(1+\tilde{q}) N_4\left(\tau+\frac{\tau }{h^2}\right)\right)\bE\sum_{l=1}^n\tau\|u^{h,\tau}_{l-1}\|_m^2\\
+ N\left(\tau+\frac{\tau}{h^2}\right)\bE\int_{]0,t_n]}\sum_{i=0}^d\|F^i_t\|_m^2dt
+N\bE\int_{]0,t_n]}\left(\|G_t\|^2_{m,\ell_2}+\int_{\bR^d}\|R_t(z)\|_m^2\pi_2(dz)\right)dt.
\end{gather*}
Therefore, by discrete Gronwall's inequality, there is a constant $N=N(d,m,\varkappa,K,T,\delta)$ such that
$$
\bE\|u_n^{h,\tau}\|^2_m+\bE\sum_{l=0}^{n}\tau\sum_{i=0}^d\|\delta_{h,i}u_{l}^{h,\tau}\|_{m}^2\le N\bE\|\varphi\|_m^2
$$
\begin{equation}        \label{eq:beforeexpsupest:explicit}
+N\bE\int_{]0,T]}\left(\sum_{i=0}^d\|F_t^i\|_m^2+\|G_t\|^2_{m,\ell_2}+\int_{\bR^d}\|R_t(z)\|_m^2\pi_2(dz)\right)dt.
\end{equation}
Now that we have proved  \eqref{eq:beforeexpsupest:explicit}, we  will show  \eqref{main estimate:explicit}. Estimating as we did above, we get that  there is a constant $N$ such that  
$$
\bE\max_{0\le n\le \cT}\sum_{l=1}^{n}(I_1(t_l)+I_2(t_l)+I_3(t_l)+I_6(t_l))\le N\bE\sum_{l=0}^{\cT-1}\tau\sum_{i=0}^d\|\delta_{h,i}u^{h,\tau}_{l}\|_{m}^2
$$
$$
+N\bE\int_{]0,T]}\left(\sum_{i=0}^d\|F_t^i\|_m^2+\|G_t\|_{m,\ell_2}^2+\int_{\bR^d}\|R_t(z)\|_m^2\pi_2(dz)\right)dt.
$$
Applying the Burkholder-Davis-Gundy inequality and Young's inequality, we obtain
\begin{gather*}
\bE\max_{0\le n\le \cT}\sum_{l=1}^nI_5(t_l)
\le 6\bE\left|\sum_{l=1}^n\int_{]t_{n-1},t_n]}\int_{\bR^d}\left(u^{h,\tau}_{n-1},\cI^h(z)u^{h,\tau}_{n-1}+R_t(z)\right)_m^2\pi_2(dz)dt\right|^{1/2}\\
\le \frac{1}{4}\bE\max_{0\le n\le \cT}\|u^{h,\tau}_n\|_m^2 +N\left(\bE\sum_{l=0}^{\cT-1}\tau \bE\|\delta_{h,i}u^{h,\tau}_{l}\|_m^2 +\bE\sum_{l=0}^{\cT-1}\tau \bE\|u^{h,\tau}_{l}\|_m^2 \right)\\
+N\bE\int_{]0,T]}\int_{\bR^d}\|R_t(z)\|_m^2\pi_2(dz)dt.
\end{gather*}
We can estimate $\bE\max_{0\le n\le \cT}\sum_{l=1}^nI_4(t_l)$ in  similar way.  Combining the above $\bE\max_{0\le n\le \cT}$-estimates and \eqref{eq:beforeexpsupest:explicit}, we obtain \eqref{main estimate:explicit}.
\end{proof}
\noindent 
The following theorem  establishes the existence and uniqueness of a solution to  \eqref{implicit:discretised equation on R^d} and  the stability of the implicit-explicit approximation scheme. 
\begin{theorem}         \label{thm:stability implicit}
Let  Assumption \ref{asm:coeff} hold with $m\ge 0$. Let $F^i \in \bH^m$ for  $i\in \{0,...,d \}$,   $G\in \bH^m(\ell_2)$ and $R\in \bH^m(\pi_2)$. Then there exists a  constant $R=R(d,m,\varkappa,K,\delta)$ such that if $\cT>R$, then for any $h\neq 0$, there exists a unique $H^m$-valued solution $(v^{h,\tau}_n)_{n=0}^{\cT}$ of 
\begin{align}      \label{discretised equation on the R^d with free term:implicit}    
v^{h,\tau}_n&=v^{h,\tau}_{n-1}+\int_{]t_{n-1},t_n]}\left((\tilde{\cL}^h_{t_n}+I^h_{\delta})v_{n}^{h,\tau}+\tilde{I}^h_{\delta^c}v^{h,\tau}_{n-1}+\sum_{i=0}^d\delta_{h,i}F^i_t\right)dt\nonumber\\
&\quad+\int_{]t_{n-1},t_n]}\left( \mathbf{1}_{n>1}\cN^{\varrho;h}_{t_{n-1}}v_{n-1}^{h,\tau}+G^\varrho_t\right) dw^\varrho_t\nonumber\\
&\quad +\int_{]t_{n-1},t_n]}\int_{\bR^d}\left(\mathbf{1}_{n>1}\cI^h(z)v^{h,\tau}_{n-1}+R_t(z)\right)q(dt,dz),
\end{align}
for $n\in \{1,\ldots,\cT\},$ for any  $H^{m}-$valued $\mathscr{F}_0-$measurable initial condition $\varphi$. Moreover, if $\bE\|\varphi\||_{m}^2<\infty$,  then   there is a constant  $N=N(d,m, \varkappa, K, T,\delta)$ such that
$$
\bE\max_{0\le n\le \cT}\|v^{h,\tau}_n\|^2_{m}+\bE\sum_{n=0}^{\cT}\tau \sum_{i=0}^d \|\delta_{h,i}v^{h,\tau}_n\|^2_{m} \leq N\bE\|\varphi\|^2_{m}
$$
\begin{equation}             \label{main estimate:implicit}
+N\bE\int_0^T\left( \sum_{i=0}^d\|F^i_t\|^2_{m}+\|G_t\|^2_{m}+\int_{\bR^d} \|R_t(z)\|^2_{m}\pi_2(dz) \right)dt. 
\end{equation}
\end{theorem}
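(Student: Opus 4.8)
My plan is to treat the two assertions --- well-posedness of the scheme and the a priori bound \eqref{main estimate:implicit} --- separately, reusing the machinery of Theorem \ref{thm:stability explicit} wherever possible. For well-posedness I would rewrite \eqref{discretised equation on the R^d with free term:implicit} at each time level as the linear equation $(1-\tau A_{t_n})v^{h,\tau}_n=\Psi_n$, with $A_t:=\tilde{\cL}^h_t+I^h_\delta$ and $\Psi_n$ collecting the known terms (the explicit contribution $v^{h,\tau}_{n-1}+\tau\tilde{I}^h_{\delta^c}v^{h,\tau}_{n-1}$, the time-integral of $\sum_i\delta_{h,i}F^i$, and the two stochastic increments), all of which are $H^m$-valued and $\cF_{t_n}$-measurable. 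The implicit coercivity \eqref{implicit coercivity} gives $(u,(1-\tau A_t)u)_m\ge(1-\tau N_2)\|u\|_m^2$, so once $\tau N_2<1$ (i.e. $\cT$ exceeds the threshold $R$) the bounded, coercive bilinear form $(u,(1-\tau A_t)v)_m$ falls under the Lax--Milgram theorem and produces a unique $v^{h,\tau}_n=(1-\tau A_{t_n})^{-1}\Psi_n\in H^m$; measurability and square-integrability then propagate by induction on $n$, using $\|(1-\tau A_{t_n})^{-1}\|_{H^m\to H^m}\le(1-\tau N_2)^{-1}$ together with \eqref{eq:MJsquaredh}.

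For the stability bound I would run the discrete energy method, but---unlike in the explicit case---pair the increment with the \emph{new} iterate to expose the implicit coercivity. Writing $v^{h,\tau}_n-v^{h,\tau}_{n-1}=\tau\Phi_n+\eta_n$, where $\tau\Phi_n$ is the drift increment (containing $A_{t_n}v^{h,\tau}_n$) and $\eta_n$ is the sum of the two explicit stochastic integrals over $]t_{n-1},t_n]$, the identity $\|y\|_m^2-\|x\|_m^2=2(y,y-x)_m-\|y-x\|_m^2$ together with $v^{h,\tau}_n-\tau\Phi_n=v^{h,\tau}_{n-1}+\eta_n$ yields the clean decomposition $\|v^{h,\tau}_n\|_m^2-\|v^{h,\tau}_{n-1}\|_m^2=2\tau(v^{h,\tau}_n,\Phi_n)_m-\tau^2\|\Phi_n\|_m^2+2(v^{h,\tau}_{n-1},\eta_n)_m+\|\eta_n\|_m^2$. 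The apparent difficulty that $v^{h,\tau}_n$ is correlated with $\eta_n$ through the implicit solve dissolves here: the surviving martingale term is $2(v^{h,\tau}_{n-1},\eta_n)_m$, which has zero expectation because $v^{h,\tau}_{n-1}$ is $\cF_{t_{n-1}}$-measurable and $\bE[\eta_n\mid\cF_{t_{n-1}}]=0$, while the quadratic drift term $-\tau^2\|\Phi_n\|_m^2\le0$ is simply discarded, so that no Courant--Friedrichs--Lewy restriction (Assumption \ref{asm:tauhbound}) is needed. I would then evaluate $\bE\|\eta_n\|_m^2$ by It\^o's isometry, and handle $2\tau(v^{h,\tau}_n,\Phi_n)_m$ by applying \eqref{coercivity} to $v^{h,\tau}_n$ through the rearrangement $\tilde{\cL}^h+I^h_\delta=\cL^h+I^h-\tilde{I}^h_{\delta^c}$ (controlling the last operator by \eqref{eq:Itildesquared}) and by treating the free-term part with \eqref{eq:conjugatedelh} and Young's inequality.

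The crucial point is how the explicit noise energy is absorbed. Rather than bounding it crudely, I would retain the negative quantities $-\tau\|\cN^h_{t_n}v^{h,\tau}_n\|_{m,\ell_2}^2-\tau\int_{\bR^d}\|\cI^h(z)v^{h,\tau}_n\|_m^2\pi_2(dz)$ furnished by \eqref{coercivity} at level $n$, and match them against the leading part $\tau\|\cN^h_{t_{n-1}}v^{h,\tau}_{n-1}\|_{m,\ell_2}^2+\tau\int\|\cI^h(z)v^{h,\tau}_{n-1}\|_m^2\pi_2(dz)$ of $\bE\|\eta_n\|_m^2$. On summing over $n$ these telescope, and the indicator $\mathbf{1}_{n>1}$ in \eqref{discretised equation on the R^d with free term:implicit} is decisive: it deletes the would-be boundary term at $n=0$, namely $\tau\|\cN^h_{t_0}\varphi\|^2$, which by \eqref{eq:MJsquaredh} scales like $\tau h^{-2}\|\varphi\|_m^2$ and would otherwise destroy the $h$-uniformity; the only boundary contribution left is $-\tau\|\cN^h_{t_N}v^{h,\tau}_N\|^2\le0$. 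The small Young residual is reabsorbed into the coercive gradient term $(\varkappa-\varsigma(\delta))\sum_i\|\delta_{h,i}v^{h,\tau}_n\|_m^2$ by taking the Young parameters small, leaving $\bE\|v^{h,\tau}_N\|_m^2+c\sum_{n\le N}\tau\,\bE\sum_i\|\delta_{h,i}v^{h,\tau}_n\|_m^2\le\bE\|\varphi\|_m^2+N\sum_{n\le N}\tau\,\bE\|v^{h,\tau}_n\|_m^2+C(\text{data})$. Discrete Gronwall's inequality then gives the bound on $\max_n\bE\|v^{h,\tau}_n\|_m^2$ and on the gradient sum, and the passage from $\max_n\bE$ to $\bE\max_n$ is obtained exactly as in the final part of the proof of Theorem \ref{thm:stability explicit}, via the Burkholder--Davis--Gundy inequality applied to $\sum_{l\le n}2(v^{h,\tau}_{l-1},\eta_l)_m$ and Young's inequality. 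The main obstacle, and the step requiring genuine care, is precisely this telescoping absorption of the explicit noise against the implicit dissipation, with the correct bookkeeping of the $\mathbf{1}_{n>1}$ term, since it is what makes the implicit--explicit scheme unconditionally stable; the correlation between $v^{h,\tau}_n$ and $\eta_n$ is, by contrast, only a superficial obstruction that the algebraic identity above removes.
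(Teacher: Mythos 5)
Your proposal is correct and follows essentially the same route as the paper's proof: inversion of the coercive operator $\phi\mapsto\phi-\tau(\tilde{\cL}^h_{t_n}+I^h_\delta)\phi$ for solvability (the paper cites Proposition 3.4 of \cite{GyMi05} where you invoke Lax--Milgram, but via the same bounds \eqref{growthD} and \eqref{coercivD}), then the discrete energy identity with the implicit drift paired against the new iterate, absorption of the explicit noise quadratic variation into the coercivity \eqref{coercivity} through the time-shifted telescoping made possible by $\mathbf{1}_{n>1}$, discrete Gronwall, and Burkholder--Davis--Gundy for the $\bE\max_n$ bound. The only caveat is cosmetic: the level-$n$ negative noise terms supplied by \eqref{coercivity} cancel the quadratic variation in $\|\eta_{n+1}\|_m^2$ rather than in $\|\eta_n\|_m^2$, but your summed bookkeeping of the two boundary terms is exactly right.
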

\begin{proof}
For each $n\in\{1,\ldots,\cT\}$,  we write  \eqref{discretised equation on the R^d with free term:implicit}  as
$$D_{n}v^{h,\tau}_n=y_{n-1},$$
where $D_n$ is the operator defined by
$$
D_n\phi:=\phi-\tau\left(\tilde{\cL}^h_{t_n}+I^h_{\delta}\right)\phi
$$
and 
\begin{align*}
y_{n-1}&:=v^{h,\tau}_{n-1}+ \int_{]t_{n-1},t_n]}\left(\tilde{I}_{\delta^c}^hv^{h,\tau}_{n-1}+\sum_{i=0}^d\delta_{h,i}F^i_t\right)dt+\int_{]t_{n-1},t_n]}\left(\mathbf{1}_{n>1} \cN^{\varrho;h}_{t_{n-1}}v_{n-1}^{h,\tau}+G^\varrho_t\right) dw^\varrho_t\\
&\quad+\int_{]t_{n-1},t_n]}\int_{\bR^d}\left((\mathbf{1}_{n>1} \cI^h(z)v^{h,\tau}_{n-1}+R_t(z)\right)q(dt,dz).
\end{align*} 
Fix  $\epsilon_1 $ and $\epsilon_2$ in $(0,1)$ such that 
$$
 \overline{q}_1:=\varkappa-\varsigma_1(\delta)-\epsilon_1>0.
$$
and 
$$
\overline{q}_2:=\varkappa-\varsigma(\delta)-\epsilon_2>0.
$$
Owing to Lemma  \ref{lem:LIsquared}, there is a constant $N=N(d,m,K,\delta)$ such that   for all $\phi\in H^m$, 
\begin{equation}	\label{growthD}
\|D_n\phi\|^2_{m}\le N \left(1+\tau^2\left(\frac{1}{h^2}+\frac{1}{h^4}\right)\right)\|\phi\|_m^2.
\end{equation}
Assume $\cT>TN_2$. By
 \eqref{implicit coercivity}, for all $\phi\in H^m$, we have
\begin{equation}	\label{coercivD}
 (\phi,D_n\phi)_m\ge (1-\tau N_2)\|\phi\|_m^2+\overline{q}_1\tau\sum_{i=1}^d\|\delta_{h,i}\phi\|_m^2 \ge  (1-\tau N_2)\|\phi\|_m^2.
\end{equation}
Using Jensen's inequality and \eqref{eq:Itildesquared}, we get
\begin{align}   \label{eq:y0est}
\|y_{0}\|_{m}^2&\le 5\left(1+\pi_1(\{|z|>\delta\})^2\tau^2\right)\|\phi\|_{m}^2+ \frac{20\tau}{h^2} \int_{]0,t_1]}\sum_{i=0}^d\|F^i_t\|_m^2dt+5\left|\left|\int_{]0,t_1]}G^{\varrho}_tdw_t^{\varrho}\right|\right|_m^2\nonumber\\
&\quad+5\left|\left|\int_{]0,t_1]}\int_{\bR^d}R_t(z)q(dt,dz)\right|\right|_m^2.
 \end{align}
Since $\varphi\in H^m$, $F^i\in \bH^m$, $i\in \{0,1,\ldots,d\}$, $G\in \bH^m(\ell_2)$, and $R\in \bH^m(\pi_2)$, it follows that $y_0\in H^m$.
By  \eqref{growthD}, and \eqref{coercivD}, owing to   Proposition 3.4 in \cite{GyMi05} ($p=2$),  there exists a unique  $v^{h,\tau}_1$ in $H^m$  such that $D_1v^{h,\tau}_1=y_{0}$, and moreover
\begin{equation}        \label{eq:aprioriestimate1}
\|v^{h,\tau}_1\|_m^2\le  1+\frac{\|y_{0}\|_m^2}{(1-\tau N_2)^2}<\infty.
\end{equation}
Proceeding by induction on $n\in \{1,\ldots,\cT\}$, one can show that there exists  a unique  $v^{h,\tau}_n$ in $H^m$ such that $D_nv^{h,\tau}_n=y_{n-1}$, and moreover
\begin{equation}        \label{eq:aprioriestimaten}
\|v^{h,\tau}_n\|_m^2\le  1+\frac{\|y_{n-1}\|_m^2}{(1-\tau N_2)^2}<\infty.
\end{equation}
Assume that $\bE\|\varphi\|_m^2<\infty$.
 By \eqref{eq:y0est} and \eqref{eq:aprioriestimate1} and  the fact that $f^i\in \bH^m$, $i\in \{0,1,\ldots,d\}$, $g\in \bH^m(\ell_2)$, and $r\in \bH^m(\nu)$, it follows that  $\bE\|v^{h,\tau}_1\|_m^2<\infty$.  
By Jensen's inequality, \eqref{eq:Itildesquared}, and \eqref{eq:MJsquaredh}, we have
\begin{gather}  
\bE\|y_{n-1}\|_{m}^2\le 7N\left(1+\pi_1(\{|z|>\delta\})^2\tau^2+\mathbf{1}_{n> 1}\tau\left(1+\frac{1}{h^2}\right)\right)\bE\|v^{h,\tau}_{n-1}\|_{m}^2\nonumber\\
+ \frac{28\tau}{h^2}\bE\int_{]0,t_1]}\sum_{i=0}^d\|F^i_t\|_m^2dt+7\bE\int_{]0,t_1]}\|G_t\|_{m,\ell_2}^2dt+7\bE\int_{]0,t_1]}\int_{\bR^d}\|R_t(z)\|_m^2\pi_2(dz)dt.  \label{eq:ynm1est}
\end{gather}
Proceeding by induction on $n$ and combining \eqref{eq:aprioriestimaten} and \eqref{eq:ynm1est}, we obtain
\begin{equation}        \label{eq:implicitapriori}
\bE\|v^{h,\tau}_n\|_{m}^2<\infty, \;\;\forall n\in\{0,1,\ldots,\cT\}.
\end{equation}
Applying  the identity $\|y\|_m^2-\|x\|_m^2=2(x,y-x)_m+\|y-x\|_m^2$, $x,y\in H^m$, for any $n\in \{1,\ldots,\cT\}$, we have
\begin{equation}        \label{eq:discrete Itos implicit}
\|v_n^{h,\tau}\|_m^2=\|v_{n-1}^{h,\tau}\|_m^2+\sum_{i=1}^{6}I_i(t_n),
\end{equation}
where 
\begin{gather*}
I_1(t_n):=2\tau(v^{h,\tau}_{n},\left(\tilde{\cL}^h_{t_{n}}+I^h_{\delta}\right)v^{h,\tau}_{n})_m + 2\tau(v^{h,\tau}_{n-1},\tilde{I}^h_{\delta^c}v^{h,\tau}_{n-1})_m+\|\eta(t_n)\|_m^2,\\
I_2(t_n):=2\int_{]t_{n-1},t_n]}\sum_{i=0}^d(u^{h,\tau}_{n},\delta_{h,i}F^i_t)_mdt,\\
I_3(t_n):=-\left|\left|\tau\left(\tilde{\cL}^h_{t_{n}}+I_{\delta}^h\right)v^{h,\tau}_{n}+\sum_{i=0}^d\int_{[t_{n-1},t_n]}\delta_{h,i}F_t^idt\right|\right|_m^2+\left|\left|\tilde{I}^h_{\delta^c}v^{h,\tau}_{n-1}\right|\right|_m^2\tau^2,\\
I_4(t_n):=2\int_{]t_{n-1},t_n]}\left(v^{h,\tau}_{n-1}, \mathbf{1}_{n>1}\cN^{\varrho;h}_{t_{n-1}}v_{n-1}^{h,\tau}+G^\varrho_t\right)_m dw^\varrho_t,\\
I_5(t_n):=2\int_{]t_{n-1},t_n]}\int_{\bR^d}\left(v^{h,\tau}_{n-1},\mathbf{1}_{n>1}\cI^h(z)v^{h,\tau}_{n-1}+R_t(z)\right)_mq(dt,dz),\\
I_{6}(t_n):=\left(\tau\tilde{I}^h_{\delta^c}v^{h,\tau}_{n-1},\eta(t_n)\right)_m,
\end{gather*}
and where 
\begin{gather*}
\eta(t_n):=\int_{]t_{n-1},t_n]}\left( \mathbf{1}_{n>1}\cN^{\varrho;h}_{t_{n-1}}v_{n-1}^{h,\tau}+G^\varrho_t\right) dw^\varrho_t
+\int_{]t_{n-1},t_n]}\int_{\bR^d}\left(\mathbf{1}_{n>1}\cI^h(z)v^{h,\tau}_{n-1}+R_t(z)\right)q(dt,dz).
\end{gather*}
As in the proof Theorem \ref{thm:stability explicit}, by Young's inequality, \eqref{coercivity}, and \eqref{eq:Itildesquared}, we have
\begin{align*}
\bE\|v_n^{h,\tau}\|^2_m&\le \left(1+2\pi_1(\{|z|>\delta\})\right)\bE\|\varphi\|_m^2-\overline{q}_2\bE\sum_{l=1}^n\tau\sum_{i=1}^d\|\delta_{h,i}v_{l}^{h,\tau}\|_{m}^2\\
&\quad+\bE\sum_{l=1}^n\tau\left(N_2+2\pi_1(\{|z|>\delta\})+\tau\pi_1(\{|z|>\delta\})^2\right)\|v^{h,\tau}_{l}\|_m^2\\
&\quad+N\bE\int_{]0,t_n]}\left(\sum_{i=0}^d\|F^i_t\|_m^2dt + \|G_t\|^2_{m,\ell_2}+\int_{\bR^d}\|R_t(z)\|_m^2\pi_2(dz)\right)dt.
\end{align*}
 Set 
$$
Z := N_2+2\pi_1(\{|z|>\delta\}),
$$
$$
R:=\max\left(\frac{2\pi_1(\{|z|>\delta\})^2}{\sqrt{Z^2+4\pi_1(\{|z|>\delta\}^2}-Z},N_2\right)T.
$$
Assume $\cT>R$.  Making use of \eqref{eq:implicitapriori} and applying  discrete Gronwall's lemma, we get that there exist a constant $N(d,m,K,\varkappa,T,\delta)$ such that 
$$
\bE\|v_n^{h,\tau}\|^2_m+\bE\sum_{l=1}^{n}\tau\sum_{i=0}^d\|\delta_{h,i}v_{l}^{h,\tau}\|_{m}^2\le N\bE\|\varphi\|_m^2
$$
\begin{equation}        \label{eq:beforeexpsupest:implicit}
+N\bE\int_{]0,T]}\left(\sum_{i=0}^d\|F_t^i\|_m^2+\|G_t\|^2_{m,\ell_2}+\int_{\bR^d}\|R_t(z)\|_m^2\pi_2(dz)\right)dt.
\end{equation}
 Using \eqref{eq:Itildesquared} instead of \eqref{eq:LIsquared}, we obtain  \eqref{main estimate:implicit} from \eqref{eq:beforeexpsupest:implicit} in the same manner as Theorem  \ref{thm:stability explicit}. Note that no bound on $\tau/h^2$ is needed in this case.
\end{proof}

\section{Proof of the main results}
\begin{proof} [Proof of Theorem \ref{th:SPIDEExist}]
By virtue of Theorems 2.9, 2.10, and 4.1 in \cite{Gy82}, in order to obtain the existence, uniqueness, regularity, and the estimate \eqref{eq:EstSPIDE}, we only need to show that \eqref{eq:SPIDE} may be realized as an abstract stochastic evolution equation in a Gelfand triple and that the growth condition and coercivity condition are satisfied. Indeed, since \eqref{eq:SPIDE} is a linear equation, the hemicontinuity condition is immediate and monotonicity follows directly from the coercivity condition. By Holder's inequality and Assumption \ref{asm:coeff}(i), for $u,v\in H^1$, we have
\begin{align}   \label{eq:bilinearform}
&\sum_{i,j=0}^d\left(\partial_ju,(v\partial_{-i}a^{ij}_t+a^{ij}\partial_{-i}v)\right)_0+\int_{|z|>\delta}\left(u(\cdot+z)-u-\mathbf{1}_{[-1,1]}(|z|)\sum_{j=1}^dz^j\partial_ju,v \right)_0\pi_1(dz)\nonumber\\
&+\int_{|z|\le \delta}\int_0^1\sum_{i,j=1}^d\left(z^j\partial_ju(\cdot+\theta z),z^i\partial_{-i}v\right)_0(1-\theta)d\theta\pi_1(dz)\le N \|u\|_1\|v\|_1.
\end{align}
Therefore, since the pairing $[\cdot,\cdot]_0$ brings $(H^1)^*$ and $H^{-1}$ into isomorphism, for each $(\omega,t)\in [0,T]\times \Omega$, there exists a linear operator $\tilde{A}_t: H^1\rightarrow H^{-1}$ such that $ [v,\tilde{A}_tu]_{0}$ agrees with the left-hand-side of the above inequality and for $u,v\in H^1$,
$\|\tilde{A}_tu\|_{-1}\le N \|u\|_1.$
 By Assumption \ref{asm:data}, the operator $A$ defined by $A(u)=\tilde{A}u+f$, maps $H^1$ to $H^{-1}$ and for $u\in H^1$,
$\|A_t(u)\|_{-1}\le N (\|u\|_1+\|f\|_{-1}).$\\
\indent For an integer $m\ge 1$, with abuse of notation, we write
$$
(\cdot,\cdot)_{m}=((1-\Delta)^{m/2}\cdot,(1-\Delta)^{m/2} \cdot)_0.
$$
and  $\|\cdot \|_{m}$ for the corresponding norm in $H^m$. It is  well known that the above inner product and  norm are equivalent to the ones introduced in Section 1.  For each $m\ge 1$ and for all $u\in H^{m+1}$ and $v\in H^{m},$ we have
$
(u,v)_{m}\le \|u\|_{m+1}\|v\|_{m-1}.
$
Since $H^{m+1}$ is dense in $H^{m-1}$, we may define the pairing $\left[\cdot,\cdot\right]_{m}: H^{m+1}\times H^{m-1}\rightarrow \bR$ by $[v,v']_{m}=\lim_{n\rightarrow\infty}(v,v_n)_{m}$ for all $v\in H^{m+1}$ and $v'\in H^{m-1}$, where $(v_n)_{n=1}^\infty \subset H^{m+1}$  is  such that $\|v_n-v'\|_{m-1}\rightarrow 0$ as $n\rightarrow \infty$. 
It can be shown that the mapping from $H^{m-1}$ to $(H^{m+1})^*$ given by
$
v'\mapsto [\cdot,v']_{m}
$
is an isometric isomorphism.  For more details, see \cite{Ro90}. Therefore,  for all $m\ge 0$, $(H^{m+1},H^{m},H^{m-1})$ forms a Gelfand triple with the pairing $[\cdot,\cdot]_{m}$.\\
\indent For $m\ge 1$ and all $u\in H^{m+1}$ and $v\in H^{m}$, using integration by parts, we get
$[v,A_t(u)]_{0}=\left((\cL_t+I_t)u+f,v\right)_0=[v,(\cL_t+I_t)u+f]_0.$
Since this is true for all $v\in H^{m}$, which is dense in $H^{1}$, the restriction of $A$ to $H^{m+1}$ coincides with $L+I+f$.
Moreover, it can easily be shown under Assumptions \ref{asm:coeff}(i) and \ref{asm:data} that for all   $m\ge 1$ and $u,v\in H^{m+1}$,
$
\|A_t(u)\|_{m-1} \le N \|u\|_{m+1}+\|f\|_{m-1},
$
where $N$ is a constant depending only on $m,d,K,$ and $\nu$, which shows that $A$ satisfies the growth condition. For $u\in H^m$, $m\ge1$, define $B^{\varrho}_t(u)=b^{i \varrho}_t\partial_iu+g^\varrho_t$, $B_t=(B^{\varrho}_t)_{\varrho=1}^\infty$, and $\mathcal{C}_z(u)=u(\cdot+z)-u+o_t(z)$, $z\in\bR^d$.
Owing to Assumption \ref{asm:coeff} (i), $B_t$ is an operator from $H^{m+1}$ to $H^m(\ell_2)$. Furthermore,  $\mathcal{C}$ is an operator from $H^{m+1}$ to $L_2(\bR^d,\pi_2(dz);H^{m}) $ (see    \eqref{delta' introduced}). It is also clear that   $A$, $B$, and $\mathcal{C}$ are appropriately measurable. Thus, \eqref{eq:SPIDE} may be realized as the following stochastic evolution equation in the Gelfand triple $(H^{m+1},H^m,H^{m-1})$:
\begin{equation}        \label{eq:abstractevolution}
u_t = u_0 + \int_{]0,t]}A_s(u_s)ds + \int_{]0,t]}B^{\varrho}_s(u_s)dw_s^\varrho +\int_{]0,t]}\mathcal{C}_z(u_{s-})q(dz,ds),
\end{equation}
for $t\in [0,T]$.
Let $u\in C_c^{\infty}(\bR^d)$. A simple calculation shows that there is a constant $N=N(\delta)$ such that 
\begin{align}                                \label{delta' introduced}  
\int_{\bR^d}\|u(\cdot+z)-u\|^2_{m}\pi_2(dz)\le  \varsigma_2(\delta )\|u\|_{m+1}^2 +N\|u\|_{m}^2.
\end{align}
 Applying Holder's inequality and the identity $(u,\partial_ju)=0$, we obtain
\begin{align*}
\int_{|z|>\delta'}\left(u(\cdot+z)-u-\mathbf{1}_{[-1,1]}(|z|)\sum_{j=1}^dz^j\partial_ju,u\right)_m\pi_1(dz)
\le 0.
\end{align*}
By the Holder's inequality and the Cauchy-Bunyakovsky-Schwarz inequality,  we have
\begin{align*}
2\int_{|z|\le \delta'}\int_0^1\sum_{i,j=1}^d\left(z^j\partial_ju(\cdot+\theta z),z^i\partial_{-i}u\right)_m(1-\theta)d\theta\pi_1(dz)\le \varsigma_1(\delta ) \|u\|_{m+1}^2.
\end{align*}
There exists a constant $\epsilon=\epsilon(\varkappa,\delta)$  such that 
$$
\overline{q}:=\varkappa-\varsigma(\delta)-\epsilon>0.
$$
As in Theorem 4.1.2 in \cite{Ro90} and Lemma \ref{lem:coercivity}, using
Holder's and Young's inequalities, the above estimates, and Assumption \ref{asm:coeff}, we find that for each $\epsilon>0$, there is a constant $N=N(d,m,\kappa,K,T,\delta)$ such that for all $(\omega,t)\in \Omega\times [0,T],$
\begin{gather*}
2[u,A_t(u)]_{m}+\|B_t(u)\|_{m,l^2}^2+\int_{\bR^d}\|\mathcal{C}_z(u)\|_ m^2\pi_2(dz)+\overline{q}\|u\|_{m+1}^2\\\le N \left(\|u\|_{m}^2+\|f_t\|_{m-1}+\|g_t\|_{m,\ell_2}+\int_{\bR^d}\|o_t(z)\|_m^2\pi_2(dz)\right).
\end{gather*}
Using the self-adjointness of $(1-\Delta)^{1/2}$, the properties of the CBF $[\cdot,\cdot]_m$, and Assumption \ref{asm:data}, for all $v\in C_c^{\infty}(\bR^d)$ and $u\in H^{m+1}$,  $m\ge 1$, we have
\begin{equation}        \label{eq:CBFhigherlower}
[v,A(u)]_{m}=((L+I)u,(1-\Delta)^mv)_{0}+(f,(1-\Delta)^mv)_0.
\end{equation}
Owing to \eqref{eq:CBFhigherlower} and the denseness of $(1-\Delta)^{-m}C_c^{\infty}(\bR^d)$ in $H^{1}$, from Theorems 2.9, 2.10, and 4.1 in \cite{Gy82}, we obtain the existence and uniqueness of a solution $u$ of \eqref{eq:SPIDE}, such that $u$ is a  c\`adl\`ag $H^m$-valued process satisfying \eqref{eq:EstSPIDE}.
\end{proof}
\begin{proof} [Proof of Proposition \ref{prop:SPIDETimeReg}]
Let $A$, $B$, and $\mathcal{C}$ be as in  \eqref{eq:abstractevolution}. Owing to Assumption \ref{asm:coeff}, the boundedness of the $m-1$-norm of $g$ in expectation, and estimate \eqref{eq:EstSPIDE}, using  Jensen's inequality and It\^o's isometry, for $s,t\in [0,T]$, we get
\begin{gather*}
\bE \left|\left|\int_{]s,t]} A_r(u_r)ds\right|\right|_{m-1}^2
\le |t-s|\left(N \bE\int_{]0,T]}\|u_t\|_{m+1}^2dt+\bE\int_{]0,T]}\|f_r\|_{m-1}^2dr\right)
\le N|t-s|,\\
\bE \left|\left|\int_{]s,t]}B^{\varrho}_r(u_r)dw_r^{\rho}\right|\right|_{m-1}^2 = \bE\int_{]s,t]}\|B_r(u_r)\|_{m-1,\ell_2}^2dr \\
\le N|t-s|\left( \sup_{t\le T}\bE\|u_t\|_{m}^2+ \sup_{t\le T}\bE\|g_t\|_{m-1,\ell_2}\right)\le N|t-s|,
\end{gather*}
and 
$$
\bE \left|\left|\int_{]s,t]}\int_{\bR^d}\mathcal{C}_z(u_{r-})q(dr,dz)\right|\right|_{m-1}^2 = \bE\int_{]s,t]}\int_{\bR^d}\|\mathcal{C}_z(u_r)\|_{m-1}^2\pi_2(dz)ds 
$$
$$
\le N|t-s|\left(\sup_{t\le T}\bE\|u_t\|_{m}^2+ \sup_{t\le T}\bE \int_{\bR^d}\|o_t(z)\|^2_{m-1}\pi_2(dz)\right)\le N|t-s|,
$$
which completes the proof of the proposition.
\end{proof}
\begin{theorem} \label{thm:mainmaintheorem:explicit}
Let Assumptions \ref{asm:coeff} through \ref{asm:tauhbound} hold for some $m\ge  2$. Let $u$ be the solution of \eqref{eq:SPIDE} and $(u^{h,\tau}_n)_{n=0}^{\cT}$ be defined by \eqref{explicit:discretised equation on R^d}.  Then there is a constant $N=N(d,m,\varkappa,K,T,C,\allowbreak \lambda,\kappa_m^2,\delta)$ such that  
\begin{equation}        \label{eq:errorest:explicit}
\bE\max_{0\le n\le \cT}\|u_{t_n}-u^{h,\tau}_n\|_{m-2}^2
+\bE\sum_{n=0}^{\cT}\tau\sum_{i=0}^d\|\delta_{h,i}u_{t_n}-\delta_{h,i}u^{h,\tau}_n\|^2_{m-2}ds\le N(|h|^2+ \tau).
\end{equation}
\end{theorem}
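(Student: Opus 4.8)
The plan is to realise the error process $e_n:=u_{t_n}-u^{h,\tau}_n$ as a solution of the generic explicit scheme \eqref{discretised equation on the R^d with free term:explicit} treated in Theorem \ref{thm:stability explicit}, but with $m$ replaced by $m-2$, with vanishing initial condition $e_0=0$ (both $u$ and $u^{h,\tau}$ start from $\varphi$), and with free terms $F^i,G,R$ encoding the consistency errors. Writing \eqref{eq:SPIDE} in integrated form over $]t_{n-1},t_n]$ and subtracting \eqref{explicit:discretised equation on R^d}, I would add and subtract the discrete operators evaluated at $u_{t_{n-1}}$, so that the leading contributions become $(\cL^h_{t_{n-1}}+I^h)e_{n-1}$, $\cN^{\varrho;h}_{t_{n-1}}e_{n-1}$ and $\cI^h(z)e_{n-1}$, while the remainders define the free terms, e.g.
$$
\textstyle\sum_{i}\delta_{h,i}F^i_t=(\cL_t+I)u_t-(\cL^h_{t_{n-1}}+I^h)u_{t_{n-1}}+(f_t-f_{t_{n-1}}),
$$
together with $G^\varrho_t=\cN^\varrho_tu_t-\cN^{\varrho;h}_{t_{n-1}}u_{t_{n-1}}+(g^\varrho_t-g^\varrho_{t_{n-1}})$ and $R_t(z)=\cI(z)u_t-\cI^h(z)u_{t_{n-1}}+(o_t(z)-o_{t_{n-1}}(z))$, each frozen to its value at $t_{n-1}$ on the corresponding subinterval. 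Since Assumption \ref{asm:coeff} and Assumption \ref{asm:tauhbound} hold, \eqref{main estimate:explicit} at level $m-2$ then reduces the theorem to the bound $\bE\int_0^T\big(\sum_i\|F^i_t\|_{m-2}^2+\|G_t\|_{m-2,\ell_2}^2+\int_{\bR^d}\|R_t(z)\|_{m-2}^2\pi_2(dz)\big)dt\le N(\tau+|h|^2)$.

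Each free term splits into a spatial part (continuous minus discrete operator at the same time) and a temporal part (frozen coefficients and solution increment). For the spatial parts I would invoke the consistency Lemmas \ref{lem estimate derivatives}, \ref{lem:intopest} and \ref{lem:NoiseOpEst}, which give $\|(\cL_t+I)u_t-(\cL^h_t+I^h)u_t\|_{m-2}\le N|h|\,\|u_t\|_{m+1}$, $\|\cN^\varrho_tu_t-\cN^{\varrho;h}_tu_t\|_{m-2,\ell_2}\le N|h|\,\|u_t\|_m$, and $\int_{\bR^d}\|\cI(z)u_t-\cI^h(z)u_t\|^2_{m-2}\pi_2(dz)\le N|h|^2\|u_t\|_m^2$. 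Because the regularity estimate \eqref{eq:EstSPIDE} guarantees $u\in\bH^{m+1}$, integrating in $(\omega,t)$ bounds the whole spatial contribution by $N|h|^2$. The data increments $f_t-f_{t_{n-1}}$, $g_t-g_{t_{n-1}}$, $o_t-o_{t_{n-1}}$ and the coefficient increments $(\cL^h_t-\cL^h_{t_{n-1}})u_t$, $(\cN^{\varrho;h}_t-\cN^{\varrho;h}_{t_{n-1}})u_t$ are handled directly: Assumptions \ref{asm:boundedfreeterm} and \ref{asm:timecoeff} supply a factor $|t-t_{n-1}|\le\tau$, and in the coefficient terms the second-order difference $\delta_{h,i}\delta_{-h,j}$ still acts on the \emph{regular} function $u_t\in H^{m+1}$, so by Lemma \ref{lem estimate derivatives} it is controlled by $\|u_t\|_m+|h|\,\|u_t\|_{m+1}$ with no loss of powers of $h$.

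The genuine obstacle is the remaining term $(\cL^h_{t_{n-1}}+I^h)(u_t-u_{t_{n-1}})$, where the second-order difference operators act on the solution increment $u_t-u_{t_{n-1}}$, for which Proposition \ref{prop:SPIDETimeReg} provides only $\bE\|u_t-u_{t_{n-1}}\|_{m-1}^2\le\lambda\tau$; estimating $\delta_{h,i}\delta_{-h,j}$ crudely would cost a factor $1/h^2$ and destroy the rate. This is exactly what the divergence form $\sum_i\delta_{h,i}F^i_t$ of the drift free term in Theorem \ref{thm:stability explicit} is designed to absorb. The plan is to shift one difference onto the coefficients via the discrete product rule $a\,\delta_{h,i}v=\delta_{h,i}(av)-(\delta_{h,i}a)\,v(\cdot+he_i)$: the principal part of $\cL^h_{t_{n-1}}$ becomes $\sum_{i\ge 1}\delta_{h,i}\big(\sum_{j\ge 1} a^{ij}_{t_{n-1}}\delta_{-h,j}w\big)$ modulo a commutator carrying only one difference of $w$, and likewise $I^h_\delta w=\sum_{i\ge1}\delta_{h,i}\big(\sum_{k,l}\bar\theta^{h,k}_l\sum_{j\ge1}\zeta^{ij}_{h,k}\delta_{-h,j}w(\cdot+hz_{r^{h,k}_l})\big)$ with no commutator, since its weights are $x$-independent. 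With $w=u_t-u_{t_{n-1}}$ this puts $F^i$ ($i\ge1$) in the form ``bounded coefficient times $\delta_{-h,j}w$,'' while the commutators, the order-zero/first-order operator $I^h_{\delta^c}w$, the first-order ($i$ or $j=0$) parts of $\cL^h$, and all the spatial and coefficient-time terms are collected into $F^0$ (here $\delta_{h,0}$ is the identity).

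Using $\|\delta_{h,i}w\|_{m-2}\le\|w\|_{m-1}$ together with Proposition \ref{prop:SPIDETimeReg}, every such contribution is bounded by $N\|u_t-u_{t_{n-1}}\|_{m-1}^2$, whose time integral is $\le N\lambda\sum_n\int_{t_{n-1}}^{t_n}(t-t_{n-1})\,dt=O(\tau)$. The jump free term $R$ needs no such trick, since $\cI^h(z)$ is effectively of first order: by \eqref{eq:MJsquared}, $\int_{\bR^d}\|\cI^h(z)w\|^2_{m-2}\pi_2(dz)\le N\sum_{i}\|\delta_{h,i}w\|^2_{m-2}\le N\|w\|^2_{m-1}$, again $O(\tau)$ after integration, and the time increment of $o$ is controlled by Assumption \ref{asm:boundedfreeterm}. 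Collecting the spatial $O(|h|^2)$ and temporal $O(\tau)$ estimates for $F^i$, $G$ and $R$, and inserting them into \eqref{main estimate:explicit} with $e_0=0$, yields \eqref{eq:errorest:explicit}. The hard part, as indicated, is the bookkeeping that keeps the second-order-in-space, low-time-regularity term $(\cL^h+I^h_\delta)(u_t-u_{t_{n-1}})$ in divergence form, so that the CFL-tuned stability estimate of Theorem \ref{thm:stability explicit} can close the argument.
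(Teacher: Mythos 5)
Your proposal is correct and follows essentially the same route as the paper: the error is cast as a solution of the generic explicit scheme \eqref{discretised equation on the R^d with free term:explicit} at level $m-2$ with zero initial datum, the spatial consistency is handled by Lemmas \ref{lem estimate derivatives}, \ref{lem:intopest} and \ref{lem:NoiseOpEst} together with \eqref{eq:EstSPIDE}, and the temporal error by Proposition \ref{prop:SPIDETimeReg} and Assumptions \ref{asm:boundedfreeterm}--\ref{asm:timecoeff}. In particular, you correctly identified the one genuinely delicate point — putting the second-order difference acting on $u_t-u_{\kappa_1(t)}$ into divergence form via the discrete product rule so that $F^i$ ($i\ge 1$) carries only a single difference of the increment with the commutator relegated to $F^0$ — which is exactly how the paper's free terms $F^i$ and $F^0$ are constructed.
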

\begin{proof}
For $t\in [0,T]$, let  $\kappa_1(t):=t_{n-1}$ for $t\in ]t_{n-1},t_n]$, and set   $e^{h,\tau}_n:=u^{h,\tau}_n-u_{t_n}$. One can easily verify  that  $e^{h,\tau}_n$ satisfies in $H^{m-2}$,
\begin{align}      
e^{h,\tau}_n&=e^{h,\tau}_{n-1}+\int_{]t_{n-1},t_n]}\left((\cL^h_{t_n-1}+I^h)e_{n-1}^{h,\tau}+\sum_{i=0}^d\delta_{h,i}F^i_t\right)dt\nonumber+\int_{]t_{n-1},t_n]}\left( \cN^{\varrho;h}_{t_{n-1}}e_{n-1}^{h,\tau}+G^\varrho_t\right) dw^\varrho_t\nonumber\\
&\quad+ \int_{]t_{n-1},t_n]}\int_{\bR^d}\left(\cI^h(z)e^{h,\tau}_{n-1}+R_{t}(z)\right)q(dt,dz),
\end{align}
where 
\begin{gather*}
F^0_t:=(\cL^h_{\kappa_1(t)}-\cL_{\kappa_1(t)})u_t +(\cL_{\kappa_1(t)}-\cL_t)u_t+(I^h-I)u_t
+ (f_{\kappa_1(t)}-f_t)+I^h_{\delta^c}(u_{\kappa_1(t)}-u_t)\\+\sum_{j=1}^d a^{0j}_{\kappa_1(t)} \delta_{-h,j}(u_{\kappa_1(t)}-u_t)+\sum_{i=0}^d a_{\kappa_1(t)}^{i0} \delta_{h,i}(u_{\kappa_1(t)}-u_t)-\sum_{i,j=1}^d\delta_{-h,j}(u_{\kappa_1(t)}-u_t)(\cdot+h)\delta_{h,i}a_{\kappa_1(t)}^{ij},
\end{gather*}
\begin{align*}
F^i_t&:=\sum_{j=1}^da_{\kappa_1(t)}^{ij}\delta_{-h,j}(u_{\kappa_1(t)}-u_t) +\sum_{k=0}^\infty \sum_{l=1}^{\chi(h,k)} \bar{\theta}^{k,h}_l \zeta^{ij}_{k,h} \delta_{-h,j} (u_{\kappa_1(t)}-u_t)(\cdot+ h z_{r^{h,k}_l})
\end{align*}
\begin{align*}
 G_t^{\varrho}:&=(\cN_{\kappa_1(t)}^{\varrho}-\cN^\varrho_t)u_t+
(\cN_{\kappa_1(t)}^{\varrho;h}-\cN^\varrho_{\kappa_1(t)})u_t +\cN^\varrho_{\kappa_1(t)}(u_{\kappa_1(t)}-u_t)+ (g^\varrho_{\kappa_1(t)}-g_t^\varrho)
\end{align*}
\begin{align*}
R_t^h(z):&=\left(\cI^h(z)-\cI(z)\right)u_{t-}+ \cI^h(z)(u_{\kappa_1(t)}-u_{t-}) + \left(o_{\kappa_1(t)}(z)-o_t(z)\right).
\end{align*}
By Theorem \ref{thm:stability explicit}, we have
$$
\bE\max_{0\le n\le \cT}\|e^{h,\tau}_n\|^2_{m-2}+\bE\sum_{n=0}^{\cT} \tau \sum_{i=0}^d \|\delta_{h,i}e^{h,\tau}_n\|^2_{m-2} 
$$
\begin{equation}            
\le N\bE\int_{]0,T]}\Big( \sum_{i=0}^d\|F^i_t\|^2_{m-2}+\|G_t\|^2_{m-2,\ell_2}+\int_{\bR^d} \|R_t(z)\|^2_{m-2}\pi_2(dz) \Big)dt.
\end{equation}
Using Lemmas  \ref{lem estimate derivatives}, \ref{lem:intopest}, and \ref{lem:NoiseOpEst} and Assumptions \ref{asm:coeff}(i) and \ref{asm:timecoeff}, the right-hand-side of the above relation can be estimated by
$$
N\bE\int_{]0,T]} \left(|h|^2\|u_t\|^2_{m+1}+|\kappa_1(t)-t|\|u_t\|^2_m+\|u_{\kappa_1(t)}-u_t\|^2_{m-1}\right) dt
$$
$$
+N\bE\int_{]0,T]}\left(\|f_{\kappa_1(t)}-f_t\|_{m-2}^2+\|g_{\kappa_1(t)}-g_t\|_{m-2,\ell_2}+\int_{\bR^d}\|o_{\kappa_1(t)}(z)-o_t(z)\|_{m-2}\pi_2(dz)\right)dt
$$
where $N$ depends only on $d, m,\varkappa, K, C,\lambda,T,\delta$ and $\nu$. By virtue of \eqref{eq:EstSPIDE}, Proposition \ref{prop:SPIDETimeReg}, and Assumption \ref{asm:boundedfreeterm}, we obtain \eqref{eq:errorest:explicit}, which completes the proof.
\end{proof}
\begin{theorem} \label{thm:mainmaintheorem:implicit}
Let Assumptions \ref{asm:coeff} through \ref{asm:timecoeff} hold with $m\ge  2$ and let $u$ be the solution of \eqref{eq:SPIDE}.  There exists a constant $R=R(d,m,\varkappa,K,\delta)$ such that if $\cT>R$, then there exists a unique solution $(v^{h,\tau})_{n=0}^{\cT}$ of \eqref{implicit:discretised equation on R^d} in $H^{m-2}$. Moreover,  there is a constant $N=N(d,m,\varkappa,K, T,C, \lambda,\kappa_m^2,\delta)$ such that 
\begin{equation}        \label{eq:errorest:implicit}
\bE\max_{0\le n\le \cT}\|u_{t_n}-v^{h,\tau}_n\|_{m-2}^2
+\bE\sum_{n=0}^{\cT}\tau\sum_{i=0}^d\|\delta_{h,i}u_{t_n}-\delta_{h,i}v^{h,\tau}_n\|^2_{m-2}ds\le N(|h|^2+\tau).
\end{equation}
\end{theorem}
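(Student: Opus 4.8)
The plan is to follow the proof of Theorem \ref{thm:mainmaintheorem:explicit} verbatim in structure, replacing the explicit stability result by its implicit-explicit analogue. First I would obtain existence, uniqueness and the a~priori bound for $(v^{h,\tau}_n)_{n=0}^{\cT}$ in $H^{m-2}$ directly from Theorem \ref{thm:stability implicit} applied with $m$ replaced by $m-2$: choosing $F^0=f$, $F^i=0$ for $i\ge 1$, $G=g$, $R=o$ in \eqref{discretised equation on the R^d with free term:implicit} recovers the scheme \eqref{implicit:discretised equation on R^d}, and Assumption \ref{asm:data} gives $f\in\bH^{m-1}\subset\bH^{m-2}$, $g\in\bH^m(\ell_2)$, $o\in\bH^m(\pi_2)$ together with $\bE\|\varphi\|_{m-2}^2<\infty$. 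The threshold $R=R(d,m,\varkappa,K,\delta)$ is exactly the one produced there, so the first assertion of the theorem holds.

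Next I would set $e^{h,\tau}_n:=v^{h,\tau}_n-u_{t_n}$ and check that it solves \eqref{discretised equation on the R^d with free term:implicit} in $H^{m-2}$ for suitable free terms $F^i_t,G^\varrho_t,R_t(z)$, obtained by subtracting the integral form of \eqref{eq:SPIDE} over $]t_{n-1},t_n]$ from the scheme and splitting $(\tilde{\cL}^h_{t_n}+I^h_\delta)v^{h,\tau}_n=(\tilde{\cL}^h_{t_n}+I^h_\delta)e^{h,\tau}_n+(\tilde{\cL}^h_{t_n}+I^h_\delta)u_{t_n}$, and similarly for $\tilde{I}^h_{\delta^c}v^{h,\tau}_{n-1}$. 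Using the relation $\tilde{\cL}^h+\tilde{I}^h_{\delta^c}+I^h_\delta=\cL^h+I^h$ noted before \eqref{implicit:discretised equation on the grid}, the drift free term reduces to the spatial consistency error $(\cL^h_{t_n}+I^h)u_{t_n}-(\cL_t+I)u_t$, the time-regularity error of $f$, and the correction $\tilde{I}^h_{\delta^c}(u_{t_{n-1}}-u_{t_n})$ coming from the explicit treatment of $\tilde{I}^h_{\delta^c}$; the latter is $O(\tau)$ by \eqref{eq:Itildesquared} and Proposition \ref{prop:SPIDETimeReg}. The terms $G$ and $R$ collect the analogous noise consistency and time-regularity errors. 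The one genuinely new feature is the factor $\mathbf{1}_{n>1}$: for $n=1$ the scheme carries no stochastic increment, so the full coefficients $\cN_t u_t+g_t$ and $\cI(z)u_{t-}+o_t$ enter $G$ and $R$ over $]0,t_1]$; since this interval has length $\tau$ and these coefficients are uniformly square integrable, this contributes only $O(\tau)$.

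I would then invoke the stability estimate \eqref{main estimate:implicit} (with $m-2$ in place of $m$) to bound the left-hand side of \eqref{eq:errorest:implicit} by $N\bE\int_{]0,T]}\big(\sum_{i=0}^d\|F^i_t\|_{m-2}^2+\|G_t\|_{m-2,\ell_2}^2+\int_{\bR^d}\|R_t(z)\|_{m-2}^2\,\pi_2(dz)\big)\,dt$ and estimate each free term. The spatial consistency errors $(\cL^h_{t_n}-\cL_{t_n})u_t$, $(I^h-I)u_t$ and $(\cI^h-\cI)u_t$ are controlled by Lemmas \ref{lem estimate derivatives}, \ref{lem:intopest} and \ref{lem:NoiseOpEst} at rate $|h|$, and integrate to $O(|h|^2)$ by virtue of \eqref{eq:EstSPIDE}; the time-regularity of the coefficients and data is handled by Assumptions \ref{asm:boundedfreeterm} and \ref{asm:timecoeff}, and the time-regularity of $u$ by Proposition \ref{prop:SPIDETimeReg}, each of order $\tau$.

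The hard part will be the time-regularity error coming from the principal, second-order terms, namely $\cL^h_{t_n}(u_{t_n}-u_t)$ and $I^h_\delta(u_{t_n}-u_t)$: estimating them directly in $\|\cdot\|_{m-2}$ would demand control of $\|u_{t_n}-u_t\|_m$, which Proposition \ref{prop:SPIDETimeReg} does not supply, as it only yields the $(m-1)$-norm. The remedy, exactly as in the explicit case, is to absorb these contributions into $\sum_{i=1}^d\delta_{h,i}F^i_t$ by writing $\cL^h_{t_n}(u_{t_n}-u_t)=\sum_{i}\delta_{h,i}\big(\sum_j a^{ij}_{t_n}\delta_{-h,j}(u_{t_n}-u_t)\big)$ up to a lower-order commutator $(\delta_{h,i}a^{ij}_{t_n})(\delta_{-h,j}(u_{t_n}-u_t))(\cdot+he_i)$, and likewise for $I^h_\delta$ through its $\delta_{h,i}\delta_{-h,j}$ structure, so that each $F^i_t$ with $i\ge 1$ and the commutator sitting in $F^0_t$ involve only $\delta_{-h,j}(u_{t_n}-u_t)$. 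Then $\|F^i_t\|_{m-2}\le N\|u_{t_n}-u_t\|_{m-1}$, and Proposition \ref{prop:SPIDETimeReg} gives the $O(\tau)$ bound. Collecting the $O(|h|^2)$ and $O(\tau)$ contributions yields the estimate $N(\tau+|h|^2)$ and completes the proof.
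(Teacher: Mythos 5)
Your proposal follows the paper's own proof essentially verbatim: the paper likewise obtains existence and uniqueness from Theorem \ref{thm:stability implicit}, writes the error $e^{h,\tau}_n=v^{h,\tau}_n-u_{t_n}$ as a solution of \eqref{discretised equation on the R^d with free term:implicit} with free terms $\tilde{F}^i,\tilde{G},\tilde{R}$ built from the explicit-case free terms (with $\kappa_1$ replaced by $\kappa_2$ in the drift), and isolates exactly the two new contributions you identify — the correction $\tilde{I}^h_{\delta^c}(u_{\kappa_1(t)}-u_{\kappa_2(t)})$ bounded via \eqref{eq:Itildesquared} and Proposition \ref{prop:SPIDETimeReg}, and the first-interval term caused by $\mathbf{1}_{n>1}$ — each of order $\tau$. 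Your treatment of the second-order time-regularity terms by absorbing them into $\sum_i\delta_{h,i}F^i_t$ so that only $\|u_{\kappa_2(t)}-u_t\|_{m-1}$ is needed is precisely how the paper's $F^i$ are structured, so the argument is correct and complete.
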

\begin{proof}
The existence and uniqueness follows directly from Theorem \ref{thm:stability implicit}.
Let $\kappa_1(t)$ be as in the previous proof and set 
$\kappa_2(t)=t_n$ for $t \in ]t_{n-1},t_n]$. Let $G$ and $R$ be defined as in Theorem \ref{thm:mainmaintheorem:explicit} and define  $\bar{F}^i$ to be  $F^i$ with $\kappa_1(t)$ replaced with $\kappa_2(t)$. Set $e^{h,\tau}_n=v^{h,\tau}_n-u_{t_n}$. As in the proof of Theorem \ref{thm:mainmaintheorem:explicit}, we have
\begin{align*}       
e^{h,\tau}_n&=e^{h,\tau}_{n-1}+\int_{]t_{n-1},t_n]}\left((\tilde{\cL}^h_{t_n}+I^h_{\delta})e^{h,\tau}_n+\tilde{I}^h_{\delta^c}e^{h,\tau}_{n-1}+\sum_{i=0}^d\delta_{h,i}\tilde{F}^i_t\right)dt\nonumber\\
&\quad+\int_{]t_{n-1},t_n]}\left( \mathbf{1}_{n>1}\cN^{\varrho;h}_{t_{n-1}}e^{h,\tau}_{n-1}+\tilde{G}^\varrho_t\right) dw^\varrho_t +\int_{]t_{n-1},t_n]}\int_{\bR^d}\left(\mathbf{1}_{n>1}\cI^h(z)e^{h,\tau}_{n-1}+\tilde{R}_t(z)\right)q(dt,dz),
\end{align*}
where
\begin{gather*}
\tilde{F}^i=\bar{F}^i,\; \text{for $i\neq 0$},\; 
\tilde{F}^0=\bar{F}^0+\tilde{I}^h_{\delta^c} (u_{\kappa_1(t)}-u_{\kappa_2(t)}),\\
\tilde{G}_t^\varrho=\mathbf{1}_{t \leq t_1}( \cN^{\varrho}_tu_t+g^\varrho_t)+ \mathbf{1}_{t>t_1} G_t^\varrho, \quad 
\tilde{R}_t(z)=\mathbf{1}_{t \leq t_1}\cI(z)u_{t-}+ \mathbf{1}_{t>t_1} R_t(z).
\end{gather*}
By Theorem \ref{thm:stability implicit}, we have
$$
\bE\max_{0\le n\le \cT}\|e^{h,\tau}_n\|^2_{m-2}+\bE\sum_{n=0}^{\cT} \tau \sum_{i=0}^d \|\delta_{h,i}e^{h,\tau}_n\|^2_{m-2} \leq   N(A_1+A_2+A_3),
$$
where
\begin{gather*}
A_1:=\bE\int_{]0,T]}\sum_{i=0}^d\|\bar{F}^i_t\|^2_{m-2}dt+\int_{]t_1,T]}\left(\|G_t\|^2_{m-2,\ell_2}+\int_{\bR^d} \|R_t(z)\|^2_{m-2}\pi_2(dz)\right)dt,
\\
A_2:=\bE\int_{]0,T]}\|\tilde{I}^h_{\delta^c} (u_{\kappa_1(t)}-u_{\kappa_2(t)})\|^2_{m-2}dt \\
A_3:=\bE\int_{]0,t_1]}\left(\|M_tu_t+g_t\|^2_{m-2,\ell_2}
+\int_{\bR^d}\|  \cI(z)u_{t}+o_t(z)\|^2_{m-2}\pi_2(dz)\right)dt.
\end{gather*}
As in the proof of Theorem \ref{thm:mainmaintheorem:explicit}, we have
$
A_1 \leq N(|h|^2+\tau).
$
By Proposition \ref{prop:SPIDETimeReg}, we get
$$
A_2 \leq N \bE\int_0^T \|u_{\kappa_1(t)}-u_{\kappa_2(t)}\|^2_{m-1} dt \leq N \tau.
$$ 
Owing to \eqref{asm:boundedfreeterm},  we have
\begin{gather*}
A_3 \leq N \bE\int_0^{t_1} \left(\|u_t\|^2_{m-1}+\|g_t\|^2_{m-2,\ell_2}+\int_{\bR^d}\|o_t(z)\|_{m-2}^2\pi_2(dz)\right) dt\\
\leq N \tau \bE\int_0^{t_1} \left(\sup_{t\leq T} \|u_t\|^2_{m-1} +
\xi\right) \ dt \leq N \tau .
\end{gather*}
Combining  the above estimates yields \eqref{eq:errorest:implicit}.
\end{proof}

By virtue of Sobolev's embedding theorem and \eqref{eq: embedding}, as in \cite{GyKr10c}, we obtain the following  corollaries of Theorem \ref{thm:mainmaintheorem:explicit} and Theorem \ref{thm:mainmaintheorem:implicit}.
\begin{corollary}       \label{cor:beforeembeddingthm:explicit}
Let $l\ge 0$ be an integer. Suppose the assumptions of Theorem \ref{thm:mainmaintheorem:explicit} hold with $m>l+2+d/2$.  Then for all $\lambda=(\lambda^1,\ldots,\lambda^l)\in \{1\ldots,d\}^l$ and $\delta_{h,\lambda}=\delta_{h,\lambda^1}\cdots\delta_{h,\lambda^l}$, there is a constant  $N=N(d,m,l, \varkappa,K, T,C, \lambda,\kappa_m^2,\delta)$ such that
\begin{align*}
\bE\max_{0\le n\le \cT}\sup_{x\in\bR^d}|\delta_{h,\lambda}u_{t_n}(x)-\delta_{h,\lambda}u_n^{h,\tau}(x)|^2+\bE\max_{0\le n\le \cT}\|\delta_{h,\lambda}u_{t_n}-\delta_{h,\lambda}u_n^{h,\tau}\|_{\ell_2(\bG_h)}^2\le N(|h|^2+ \tau).
\end{align*}
\end{corollary}
\begin{corollary}       \label{cor:beforeembeddingthm:implicit}
Let $l\ge 0$ be an integer. Suppose the assumptions of Theorem \ref{thm:mainmaintheorem:implicit} hold with $m>l+2+d/2$.  Then for all $\lambda=(\lambda^1,\ldots,\lambda^l)\in \{1\ldots,d\}^l$ and $\delta_{h,\lambda}=\delta_{h,\lambda^1}\cdots\delta_{h,\lambda^l}$,  there is a constant  $N=N(d,m,l,\varkappa,K, T,C, \lambda,\kappa_m^2,\delta)$ such that
\begin{align*}
\bE\max_{0\le n\le \cT}&\sup_{x\in\bR^d}|\delta_{h,\lambda}u_{t_n}(x)-\delta_{h,\lambda}v_n^{h,\tau}(x)|^2+\bE\max_{0\le n\le \cT}\|\delta_{h,\lambda}u_{t_n}-\delta_{h,\lambda}v_n^{h,\tau}\|_{\ell_2(\bG_h)}^2\le N(|h|^2+ \tau).
\end{align*}
\end{corollary}
\begin{proof} [Proof of  Theorems \ref{thm main theorem explicit}  and \ref{thm main theorem implicit}]
Let  $(\hat{u}^{h,\tau}_n)_{n=0}^M$ be defined by \eqref{explicit:discretised equation on the grid}.  
Denote by $(\cdot,\cdot)_{\ell_2(\bG_h)}$ the inner product of $\ell_2(\bG_h)$. There exists a constant $\epsilon=\epsilon(\varkappa,\delta)$  such that 
$$
\overline{q}:=\varkappa-\varsigma_1(\delta)-\epsilon>0.
$$
As in \eqref{implicit coercivity}, there is a constant $N_6=N_6(d,\varkappa,K,\delta)$ such that for all $\phi \in \ell_2(\bG_h)$,
\begin{equation}
(\phi,\tilde{\cL}^h_t\phi )_{\ell_2(\bG_h)}+(\phi,I^h_{\delta}\phi)_{\ell_2(\bG_h)}\le -\overline{q}\sum_{i=1}^d\|\delta_{h,i}\phi \|^2_{\ell_2(\bG_h)}+ N_6\|\phi \|_{\ell_2(\bG_h)}^2.
\end{equation}
  Following the arguments in the beginning  of the proof of Theorem \ref{thm:stability implicit}, we conclude that if $\cT>N_6T$, then there exists a unique solution $(\hat{v}^{h,\tau}_n)_{n=0}^M$ in $\ell_2(\bG_h)$ of \eqref{implicit:discretised equation on the grid}. It is easy to see that $N_6<N_2$ (for the same choice of $\epsilon$) for all $m>0$, where $N_2$ is the constant appearing on the right-hand-side of \eqref{implicit coercivity}, and hence $N_6<R$, where $R$ is as in Theorem \ref{thm:stability implicit}. 
Let $ (u^{h,\tau}_n)_{n=1}^M$ be defined by \eqref{explicit:discretised equation on R^d}. By Theorem \ref{thm:mainmaintheorem:implicit}, there exists a unique solution $ (v^{h,\tau}_n)_{n=1}^M$ of \eqref{implicit:discretised equation on R^d}.
It suffices to show that almost surely, 
\begin{equation}                     \label{eq:u=u hat}
u^{h,\tau}_n(x)=  \hat{u}^{h,\tau}_n(x)
\end{equation} 
 and 
\begin{equation}                               \label{eq:v=v hat}
v^{h,\tau}_n(x)=  \hat{v}^{h,\tau}_n(x),
\end{equation}
  for all $n \in \{0,...,M\}$ and $x \in \bG^h$. 
 Let $\mathscr{S}:H^{m-2} \to \ell_2(\bG^h)$ denote the embedding 
from Remark \ref{rem:embeddingoffreeterms}. Applying $\mathscr{S}$ to both sides of \eqref{explicit:discretised equation on R^d}, one can see that $\mathscr{S}u^{h,\tau}$ and $\hat{u}^{h,\tau}$ satisfy the same recursive relation in $\ell_2(\bG^h)$ with common initial condition $\varphi$, and hence \eqref{eq:u=u hat} follows. Similarly, $\mathscr{S}v^{h,\tau}$ and $\hat{v}^{h,\tau}$ satisfy the same equation in $\ell_2(\bG^h)$ and \eqref{eq:v=v hat} follows from the uniqueness of the $\ell_2(\bG^h)$ solution of \eqref{implicit:discretised equation on the grid}.
\end{proof}
\begin{remark}
It follows from Corollaries \ref{cor:beforeembeddingthm:explicit}, \ref{cor:beforeembeddingthm:implicit}, and relations \eqref{eq:u=u hat} and \eqref{eq:v=v hat} that if more regularity is assumed of the coefficients and the data of the equation \eqref{eq:SPIDE}, then  better estimates can be obtained than the ones presented in Theorems \ref{thm main theorem explicit} and \ref{thm main theorem implicit}.
\end{remark}

\textbf{Acknowledgment.} We express our gratitude to I. Gy\"ongy and R. Mikulevicius for offering invaluable  comments and suggestions upon reading a draft of this manuscript. We also would like to thank Brian Hamilton for sharing his MATLAB and numerical computing expertise with us.

\bibliographystyle{plain}
\bibliography{../bibliography}

\end{document}